\documentclass{amsart}
\usepackage{amssymb}
\usepackage[dvips]{epsfig}
\usepackage{graphicx}
\usepackage{color}
%\usepackage{refcheck} %this command gives the unreferred labels%
%%%
%\usepackage[notcite,notref]{showkeys}

 \usepackage[latin1]{inputenc}
 \usepackage[T1]{fontenc}
 \usepackage[normalem]{ulem}
\usepackage{verbatim}
 \usepackage{graphicx}
\usepackage[latin1]{inputenc}
\usepackage{amsmath}
\usepackage{amssymb}
\usepackage{amsfonts}
\usepackage{amsthm}
\usepackage{bbm}
\usepackage{hyperref}

\newcommand{\R}{\mathbb R}
\newcommand{\p}{\partial}

\newcommand{\Z}{\mathbb Z}

\newcommand{\g}{\gamma}

\newcommand{\ji}{\langle}
\newcommand{\jd}{\rangle}
%\newcommand{\H}{\mathbb{H}}

%%%%%%%%%%%%%%%%%%%%%%%%%%%%%%%%%%%%%%%%%%%%%%%%%%%%%%%%%%%%%

\numberwithin{equation}{section}

\newtheorem{theorem}{Theorem}[section]
\newtheorem{proposition}[theorem]{Proposition}
\newtheorem{remark}{Remark}[section]
\newtheorem{lemma}[theorem]{Lemma}
\newtheorem{corollary}[theorem]{Corollary}

\newtheorem*{TA}{Theorem A}
\newtheorem*{TB}{Theorem B}
\newtheorem*{TC}{Theorem C}

\begin{document}

\title[Zakharov-Kuznetsov equation]{Dispersive Blow-up for Solutions of the Zakharov-Kuznetsov equation\\ }
%\author[G. Fonseca]{G. Fonseca}
%\address{Instituto de Matem\' atica, Universidade Federal do Rio de Janeiro, Caixa Postal 68530 CEP 21941-97, Rio de Janeiro, RJ Brasil}
%\email{didier@im.ufrj.br}
\author[F. Linares]{F. Linares}
\address{ IMPA\\ Estrada Dona Castorina 110\\ Rio de Janeiro 22460-320, RJ Brasil}
\email{linares@impa.br}
\author[A. Pastor]{A. Pastor}
\address{IMECC-UNICAMP, Rua S\'ergio Buarque de Holanda, 651, 13083-859, Campinas-SP, Brazil }
\email{apastor@ime.unicamp.br}
\author[J. Drumond Silva]{J. Drumond Silva}
\address{Center for Mathematical Analysis, Geometry and Dynamical Systems,Department of Mathematics, Instituto Superior T\'ecnico, Universidade de Lisboa, Av. Rovisco Pais, 1049-001 Lisboa, Portugal}
\email{jsilva@math.tecnico.ulisboa.pt }

\keywords{Nonlinear dispersive equations; Zakharov-Kuznetsov equation;  Dispersive Blow-up}
\subjclass[2010]{Primary: 35Q53. Secondary: 35B44}

\begin{abstract}   The main purpose here is the study of dispersive blow-up for solutions of the Zakharov-Kuznetsov equation.
	Dispersive blow-up refers to point singularities due to the focusing of short or long waves.  We will construct initial data such 
	that solutions of the linear problem present this kind of singularities. Then we show that the
	corresponding solutions of the nonlinear problem 
	present dispersive blow-up inherited from the linear component part of the equation. Similar results are obtained for the generalized Zakharov-Kuznetsov equation.
\end{abstract}

\maketitle

%%%%%%%%%%%%%%

\section{Introduction}

In this paper we consider solutions of the initial value problem (IVP) associa\-ted to the two dimensional Zakharov-Kuznetsov (ZK) and generalized Zakharov-Kuznetsov (gZK) equations, respectively $k=1$ and $k\geq2$,
\begin{equation}\label{zk-2d}
	\begin{cases}
		\p_tu+\p_x\Delta u+u^k\p_xu=0, \hskip10pt (x,y)\in \R^2, \, t\in\R,  k\in\Z^{+},\\
		u(x, y,0)=u_0(x,y)
	\end{cases}
\end{equation}
where $u$ is a real function and $\Delta$ denotes the Laplace operator in space variables.

The equation above, for $k=1$, arises in the  context of plasma physics, where it was formally derived by Zakharov and Kuznetsov
\cite{KZ}  as a long wave small-amplitude limit of the Euler-Poisson system in the ``cold plasma''
approximation. This formal long-wave limit was rigorously justified by  Lannes, Linares and Saut   in \cite{LLS}  (see also \cite{H} for
derivation in a different context).

The Zakharov-Kuznetsov equation can be seen as a natural multi-dimensional extension of the Korteweg-de Vries (KdV) equation, quite 
different from the well-known Kadomtsev-Petviashvili (KP) equation which is obtained as an asymptotic model of various nonlinear dispersive
systems under a different scaling.

Contrary to the Korteweg-de Vries or the Kadomtsev-Petviashvili equations, the Za\-kha\-rov-Kuznetsov equation is not completely integrable but it has a Hamiltonian structure and possesses three invariants, namely,
\begin{equation*}
	I(t)=\int_{\R^2} u(x,y,t)\,dxdy= \int_{\R^2} u_0(x,y)\,dxdy=I(0),
\end{equation*}
\begin{equation*}
	M(t)=\int_{\R^2}u^2(x,y,t)\,dxdy=\int_{\R^2} u_0^2(x,y)\,dxdy= M(0),
\end{equation*}
and
\begin{equation*}
	E(t)=\frac12\int_{\R^2}\Big(|\nabla u|^2-\frac{u^3}{3}\Big)dxdy=\frac12\int_{\R^2}\Big(|\nabla u_0|^2-\frac{u_0^3}{3}\Big)dxdy=E(0).
\end{equation*}

\vspace{3mm}

Regarding the well-posedness for the IVP \eqref{zk-2d} when $k=1$, the best local result available in the literature was recently obtained by Kinoshita \cite{Ki}
for initial data in $H^s(\R^2)$, for $s>-1/4$, and using the conserved quantities above global well-posedness is established
in $H^s(\R^2)$, $s\ge 0$. Previous results include those by  Molinet and Pilod \cite{MP}, Gr\"unrock and Herr \cite{GH}, Faminskii \cite{F} and
Linares and Pastor \cite{LPa2}.

In the case of the  generalized Zakharov-Kuznetsov equation (gZK), i.e. \eqref{zk-2d} for $k\geq 2$,
we shall briefly describe what  is known for the well-posedness of the associated IVP.   In the 2D case, the scale argument suggests local well-posedness results for data in $H^s(\R^2)$, for $s>s_k=1-2/k$.   Sharp local results were obtained by Ribaud and Vento \cite{RV2}, for 
$k\ge 4$.   In \cite{G2}, Gr\"unrock proved the local well-posedness in $\dot{H}^{s_k}(\R^n)$ for $n=2,3$ and 
$s_k=n/2-2/k$. For the particular nonlinearity $k=2$, also called modified Zakharov-Kuznetsov equation (mZK), for which  $L^2(\R^2)$ is the critical space suggested by the scaling argument, local well-posedness  was shown for data in $H^s(\R^2)$, for $s\ge 1/4$ in \cite{RV2}.
Still for the nonlinearity $k=2$ global well-posedness in $H^s(\R^2)$,  $s>53/63$, for initial data with suitable $L^2$ norm, was established by Linares and Pastor in \cite{LPa1}. Recently, this Sobolev index was pushed down to $s>3/4$ by Bhattacharya, Farah, and Roudenko in \cite{BFS}.

Here our main goal is to establish a dispersive blow-up result for solutions of the IVP associated to the ZK  and gZK equations. We recall that the notion of dispersive  singularity was first addressed in \cite{BBM} (see also \cite{BS2}) for solutions of the linear
Korteweg-de Vries equation,
\begin{equation*}
	\p_t u+\p_xu+\p_x^3u=0, \hskip15pt x\in \R, \, t\in\R.
\end{equation*}
Roughly, it is possible to obtain a solution $u=u(x,t)$ that blows up in the $L^{\infty}$-norm in finite time by considering an infinitely smooth, bounded initial wave profile $u(x,0)=u_0(x)$, possessing  finite energy. Moreover, 
the blow-up point $(x^{*},t^{*})$ can be specified arbitrarily in the upper-half plane  $\R\times (0,\infty)$. In \cite{BS1}  Bona and Saut started the mathematical 
analysis of the dispersive blow-up for solutions of the nonlinear generalized KdV equation. More precisely, they proved the following:

\begin{TA}[\cite{BS1}] Let $T>0$ be given and let $\{(x_n,t_n)\}_{n=1}^{\infty}$ be  a sequence of points in $\R\times(0,T)$ without finite limit points and such that $\{t_n\}_{n=1}^{\infty}$ is bounded below by a positive constant. Let either $s=0$ and $k=1$ or $s\ge 2$ and $k\ge 1$ be an arbitrary integer. Then there exists $\psi\in H^s(\R)\cap C^{\infty}(\R)$ such that the solution of the IVP
	\begin{equation*}
		\begin{cases}
			u_t+\p_x^3u+u^k\p_xu=0,\\
			u(x,0)=\psi(x),
		\end{cases}
	\end{equation*}
	satisfies
	\begin{enumerate}
		\item $u$ lies in $L^{\infty}([0,T]: H^s(\R))\cap L^2([0,T] :H^{s+1}_{\rm loc}(\R))$, or in $C([0,T]: H^s(\R))\cap L^2([0,T] :H^{s+1}_{\rm loc}(\R))$, if $s\ge 2$.
		\item $\p_x^su$ is continuous on $\R\times (0,T)\backslash \cup_{n=1}^{\infty}\{(x_n,t_n)\}$, and
		\item $\underset{(x,t)\to (x_n,t_n)} {\lim}  \p_x^s u(x,t)=+\infty$, for $n=1,2,\dots$.
	\end{enumerate}
\end{TA}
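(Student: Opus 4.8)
The plan is to split the analysis into a linear part, governed by the Airy propagator $S(t)=e^{-t\p_x^3}$, and a nonlinear Duhamel correction, and to show that the singularities of $\p_x^s u$ are produced entirely by the linear evolution while the correction stays continuous at the prescribed points. First I would treat the linear problem and manufacture blow-up at a single point $(x^*,t^*)$ before superposing. Writing
\[
S(t)\psi(x)=\frac{1}{(3t)^{1/3}}\int_{\R}\mathrm{Ai}\Big(\frac{x-y}{(3t)^{1/3}}\Big)\psi(y)\,dy,
\]
the structural fact to exploit is that $\mathrm{Ai}\in C^\infty$ is bounded, decays exponentially as $x\to+\infty$ and like $|x|^{-1/4}$ as $x\to-\infty$, so that $\mathrm{Ai}\in L^p(\R)$ only for $p>4$ and in particular $\mathrm{Ai}\notin L^2(\R)$. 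I would fix a target profile $h$ described on the Fourier side by $\widehat h(\xi)=\langle\xi\rangle^{-(s+1)}$, chosen so that $h$ is rapidly decaying and smooth away from the origin while $\p_x^s h$ has a single-point (logarithmic) singularity diverging to $+\infty$; here $h\in H^r$ for every $r<s+\tfrac12$, and the sign is adjusted according to the parity of $s$. The initial datum is then $\psi=S(-t^*)h$ (translated so that the singular point sits at $x^*$), so that $S(t^*)\psi=h$ refocuses the singularity at $(x^*,t^*)$, and $\psi\in H^s(\R)$ because $S$ is unitary on every Sobolev space.

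The crucial point of the linear step is that $\psi$ is in fact $C^\infty$, even though $h$ is not. This is the dispersive smoothing mechanism: on the Fourier side $\widehat\psi(\xi)=e^{-it^*\xi^3}\langle\xi\rangle^{-(s+1)}$, and although $|\widehat\psi|$ decays only polynomially, the rapid oscillation of the cubic phase regularizes the inverse transform. Concretely, I would show that
\[
\p_x^N\psi(x)=\int_{\R}(i\xi)^N e^{i(x\xi-t^*\xi^3)}\langle\xi\rangle^{-(s+1)}\,d\xi
\]
is finite and continuous for every $N$: away from the stationary points $\xi=\pm(x/3t^*)^{1/2}$ one integrates by parts against the phase, gaining $|\xi|^{-2}$ per step, and near the stationary points van der Corput's lemma (with $|\p_\xi^2(x\xi-t^*\xi^3)|\gtrsim|\xi|$) yields $|\xi|^{-1/2}$-type decay, which together kill any polynomial factor $|\xi|^N$. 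Hence $\psi\in H^s\cap C^\infty$, and $\p_x^s S(t)\psi$ is continuous on $\R\times(0,\infty)$ away from $(x^*,t^*)$, while $\p_x^s S(t^*)\psi=\p_x^s h\to+\infty$ there. To realize the full sequence $\{(x_n,t_n)\}$ I would superpose, setting $\psi=\sum_n c_n\,S(-t_n)h_n$ with profiles $h_n$ centered at $x_n$ and coefficients $c_n$ decaying fast; the hypotheses that the points have no finite accumulation and that $\inf_n t_n>0$ guarantee convergence of the series in $H^s\cap C^\infty$ and ensure that near any $(x_n,t_n)$ all the other terms are smooth, so each singularity survives.

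Finally, for the nonlinear equation I would use the Duhamel representation
\[
u(t)=S(t)\psi-\int_0^t S(t-t')\big(u^k\p_x u\big)(t')\,dt',
\]
together with the well-posedness theory, which places $u$ in $L^\infty([0,T]:H^s)\cap L^2([0,T]:H^{s+1}_{\mathrm{loc}})$ (for $k=1$ this already functions at $s=0$, while for general $k\ge1$ one needs $s\ge2$ so that $u^k\p_x u\in H^s$, and one then gains the time-continuous version $C([0,T]:H^s)$). The matter reduces to showing that the correction $z(t)=\int_0^tS(t-t')(u^k\p_x u)(t')\,dt'$ is strictly smoother than the linear part, so that $\p_x^s z$ is continuous on $\R\times(0,T)$ and in particular bounded at every $(x_n,t_n)$; then $\p_x^s u=\p_x^s S(t)\psi-\p_x^s z$ inherits exactly the singularities of the free flow, establishing (2) and (3). \textbf{I expect the continuity of $\p_x^s z$ to be the main obstacle.} It should follow from the local smoothing gain of one derivative for the Airy group combined with the integration in time, which upgrades $u^k\p_x u\in L^\infty_tH^s$ into a correction whose $s$-th derivative is continuous in $(x,t)$; the delicate case is $s=0$, $k=1$, where the limited regularity of $u$ forces one to lean on the precise smoothing estimates and the gain coming from the Duhamel integral rather than a crude Sobolev embedding.
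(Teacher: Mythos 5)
Note first that Theorem A is quoted in this paper from \cite{BS1}: the paper does not prove it, recording only the guiding idea that ``the Duhamel term is smoother than the linear term,'' which your decomposition correctly reproduces. Your linear construction is a legitimate variant of the standard one (the paper's own 2D analogue, in Section \ref{initialdata}, proves smoothness of $\sum_j \alpha_j V(-j)\phi$ with the profile $\phi=e^{-\sqrt{x^2+y^2}}$ via exponential weights and a heat-kernel identity rather than your stationary-phase argument on $e^{-it^*\xi^3}\langle\xi\rangle^{-(s+1)}$; both mechanisms are sound, and your integration-by-parts count, gaining three powers of $\xi$ per step against the cubic phase, does give $\psi\in C^\infty$). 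However, your specific profile has a concrete flaw for odd $s$: the Fourier symbol of $\p_x^s h$ is $(i\xi)^s\langle\xi\rangle^{-(s+1)}$, which for odd $s$ is an odd function, so $\p_x^s h$ is odd with a $\sgn(x)\log|x|$-type singularity. It then tends to $+\infty$ on one side of the singular point and $-\infty$ on the other, so conclusion (3) --- the two-sided limit $+\infty$ --- fails, and no global sign adjustment can fix an odd function. You need an even singular profile, e.g.\ prescribe $\p_x^s h(x)=-\chi(x)\log|x|$ for a cutoff $\chi$ and integrate up, or use an $e^{-|x|}$-type profile that misses $C^s$ at exactly one point with a definite sign, as in \cite{LPS} and Section \ref{initialdata} here.

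The more serious gap is that the heart of the theorem --- the smoothing of the Duhamel term --- is asserted rather than proved, and the naive accounting you invoke does not close. From $u\in L^\infty([0,T]:H^s)$ one gets $u^k\p_xu\in L^\infty([0,T]:H^{s-1})$ (a loss of one derivative), and the dual Kato smoothing estimate regains exactly one derivative: net gain zero, hence no continuity of $\p_x^s z$ by this route. The actual proof must place the derivative-heavy factor of the nonlinearity in the Kato smoothing norm of the solution itself (norms of the type $\|\p_x^{s+1}u\|_{L^\infty_xL^2_T}$, which belong to the local well-posedness class) and the remaining factor in maximal-function norms $\|u\|_{L^2_xL^\infty_T}$, so that the dual smoothing estimate in $L^1_xL^2_T$ produces a genuine gain; this is precisely the structure of the paper's Proposition \ref{smooth-zk} and Section \ref{nonlinearsmoothing2} in the ZK setting, and of \cite{LS} for gKdV with $s\ge2$. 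Moreover, the case $s=0$, $k=1$, which you flag as delicate but leave entirely open, cannot be handled this way at all, since $u$ is only in $L^\infty([0,T]:L^2)$ and the mixed-norm machinery is unavailable at that regularity; in \cite{BS1} this case requires working in weighted Sobolev spaces, an ingredient absent from your sketch. Two smaller items also need verification: the joint $(x,t)$-continuity of $\p_x^s z$ (not just boundedness at fixed times), and uniformity of your stationary-phase bounds over $t_n\in[\delta,T)$ so that the superposition $\sum_n c_n S(-t_n)h_n$ converges in $C^\infty$; these are routine, but the derivative-counting issue above is the real missing content.
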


The main idea behind the proof is to show that the Duhamel term associated to the solution of the IVP is smoother than the linear term of the
solution.  In \cite{LS} Linares and Scialom obtained the result above, for $k\ge 2$, by employing the smoothing effect properties without using 
weighted Sobolev spaces. Recently, Linares, Ponce and Smith \cite{LPS} improved the previous result using fractional weighted spaces 
in the case $k=1$, i.e., for the KdV equation (see also \cite{LiPal}). 

Analogous phenomena also appear in other linear dispersive equations, such as the linear Schr\"odinger equation and the free surface water waves system linearized around the rest state \cite{BS2}. In \cite{BS2} Bona and Saut constructed initial data with point singularities for solutions of
the linear Schr\"odinger equation. Bona, Ponce, Saut and Sparber \cite{BPSS} established the dispersive blow-up for the semilinear Schr\"odinger equation in dimension $n$ and other Schr\"odinger type equations. Simplifications of the proof and low regularity requirements were given in  \cite{HT}. The main tools employed to show these results were the intrinsic smoothing effects
of these dispersive equations. We remark that this is the only known result  up to date regarding dispersive blow-up in  $n>1$ dimensions.

In order to state and prove our results  we will exploit the symmetric form of \eqref{zk-2d}. We recall
that  the linear change of variables introduced by
Gr\"unrock and Herr in \cite{GH}, i.e.,
\begin{equation*}
	x'=\mu x+\lambda y,\,\,\, y'=\mu x-\lambda y,
\end{equation*}
with $\mu=4^{-1/3}, \lambda=\sqrt 3 \mu$, symmetrizes the equation so that the IVP \eqref{zk-2d} can
be written as
\begin{equation}\label{zks}
	\begin{cases}
		\p_t v+\p_x^3v+\p_y^3v+4^{-1/3}(v^k \p_xv+v^k \p_yv)=0, \hskip5pt (x,y)\in\R^2, t\in\R,\\
		v(x,y,0)=v_0(x,y).
	\end{cases}
\end{equation}

%More precisely, the change of variables
%\begin{equation*}
%	%\label{change}
%	\begin{split}
%		&x'= \alpha x+\mu y, \\
%		&y'= \alpha x-\mu y,\end{split}
%\end{equation*}
%with $\alpha\mu\neq 0$,  suitably chosen 
%takes the equation in \eqref{zk-2d} into the equation
%\begin{equation}\label{zk-2ds}
%	\p_t u+\p_x^3u+\p_y^3u+ u(\p_xu+\p_yu)=0. \hskip10pt (\text{\color{red} modulo some constants})
%\end{equation}
This allows us to consider the IVP \eqref{zks}  instead of \eqref{zk-2d} without changing the well-posedness theory.
One of the reasons to work with this symmetric equation is that we obtain better Strichartz estimates (see Lemma \ref{strich}) which will be key
in our argument. Besides, some technical issues are easier to deal with in the symmetrized equation, as we will explain below.  

The integral form of the solution of \eqref{zks} then becomes  (we drop the unimportant constant $4^{-1/3}$)
\begin{equation}\label{integral}
	v(t)= V(t)v_0-\int_0^t V(t-t') v^k(\p_xv+\p_yv) (t')\,dt',
\end{equation}
where $V(t)$ is the unitary group associated to the symmetric linear problem
\begin{equation}\label{linear-zk}
	\begin{cases}
		\p_t v+\p_x^3v+\p_y^3v=0,\\
		v(x,y,0)=v_0(x,y),
	\end{cases}
\end{equation}
which, via Fourier transform, is given by $\widehat{V(t)v_0}(\xi,\eta)= e^{it(\xi^3+\eta^3)}\widehat{v}_0(\xi,\eta)$.

We now state our main result concerning the existence  of  dispersive blow-up taking place for the ZK  equation.

\begin{theorem}\label{main1} Let $s\in [1,2)$. There exists
	\begin{equation*}
		v_0\in H^s(\R^2)\cap C^{\infty}(\R^2)\cap L^2(\langle x\rangle^r\,dxdy),
	\end{equation*}
	for some $r>0$, so that the solution $v(t)$ of the IVP \eqref{zks}, for $k=1$, is global in time with 
	\begin{equation*}
		v\in C(\R: H^s(\R^2)) \cap \dots
	\end{equation*}
	and it satisfies
	\begin{equation}\label{dispersiveblowup}
		\begin{cases}
			v(t)\in C^1(\R^2),\hskip2.8cm  t>0, t\notin\Z^{+},\\
			v(t) \in C^1(\R^2\backslash {(0,0)})\backslash C^1(\R^2),\hskip15pt t\in\Z^{+}.
		\end{cases}
	\end{equation}
	
	Moreover, if we set
	\begin{equation}\label{duhamel-zk}
	z_1(t) = \int_0^t V(t-t') v \left(\p_x v+\p_y v\right)(t')\, dt',
	\end{equation}
 we have that $z_1(t)$ is in $C^1(\R^2)$ for any $t>0$.
	
\end{theorem}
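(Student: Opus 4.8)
The plan is to exploit the Duhamel representation \eqref{integral}, which exhibits the solution as the sum of the linear evolution $V(t)v_0$ and the nonlinear term $z_1(t)$ of \eqref{duhamel-zk}. Since the dispersive focusing responsible for the loss of $C^1$ regularity at $(0,0)$ for $t\in\Z^{+}$ is produced by the initial datum through the oscillatory group $V(t)$, the whole matter reduces to showing that $z_1(t)$ is strictly smoother than $V(t)v_0$. Concretely, I would prove that $z_1(t)\in H^{\sigma}(\R^2)$ for some $\sigma>2$, uniformly for $t$ in compact time intervals, and then invoke the Sobolev embedding $H^{\sigma}(\R^2)\hookrightarrow C^1(\R^2)$, valid for $\sigma>2$ in two dimensions, to conclude that $z_1(t)\in C^1(\R^2)$ for every $t>0$.

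First I would record the regularity of the solution. From the global well-posedness theory together with the Strichartz estimates of Lemma \ref{strich} and the persistence of the weight $\langle x\rangle^r$, the solution $v$ lies in $C(\R:H^s(\R^2))$ intersected with the smoothing and Strichartz spaces controlling $v$ and its first derivatives. Writing the nonlinearity as $F:=v(\p_xv+\p_yv)=\tfrac12(\p_x+\p_y)(v^2)$, the algebra property of $H^s(\R^2)$ for $s>1$ gives $v^2\in H^s$ and hence $F\in H^{s-1}$, with analogous control in the auxiliary spaces. The endpoint $s=1$, where $H^1(\R^2)$ fails to be an algebra, must instead be handled using the extra smoothing regularity of $v$ rather than a naive product estimate.

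Next I would quantify the gain of the Duhamel operator. The decisive input is the inhomogeneous (retarded) smoothing estimate for the group $V(t)$ attached to \eqref{linear-zk}, which in the anisotropic mixed norms gains two full derivatives over the source $F$: one derivative from the local smoothing on the output and one from its dual on the input. Combined with the preceding step, this places $\int_0^t V(t-t')F(t')\,dt'$ one derivative above $v$ on the $H^s$ scale, that is, in a space embedding into $H^{s+1}$; since $s\in[1,2)$ we have $s+1>2$ whenever $s>1$, which crosses the $C^1$ threshold. Converting these space-time mixed-norm bounds into the fixed-time statement $z_1(t)\in H^{\sigma}$, $\sigma>2$, I would carry out by interpolating the smoothing gain against the Strichartz and maximal-function bounds on $v$.

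The hard part will be closing this margin uniformly, and especially at the endpoint $s=1$. There the target $H^{s+1}=H^2$ sits exactly on the boundary of the embedding into $C^1$, and the algebra estimate for $v^2$ degenerates, so the additional decay encoded in the hypothesis $v_0\in L^2(\langle x\rangle^r\,dxdy)$ --- and its propagation in time --- must be used to recover the missing fraction of a derivative. Arranging the time-oscillation gain and the weight to interact correctly, so that the frequency decay produced by integrating $e^{i(t-t')(\xi^3+\eta^3)}$ against the temporally regular $\widehat{F}$ is genuinely integrable, is the technical core on which the continuity of $\p_xz_1$ and $\p_yz_1$ throughout $\R^2$ ultimately depends.
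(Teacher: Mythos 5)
Your high-level skeleton (Duhamel term smoother than the linear flow, then Sobolev embedding into $C^1$) matches the paper, but two of your central claims are genuine gaps. First, the asserted mechanism of the smoothing gain is wrong: the dual Kato estimate \eqref{dseffect-x} gains exactly \emph{one} derivative in the fixed-time $L^2$ norm, and you cannot stack the homogeneous smoothing ``on the output'' on top of its dual ``on the input'' to get two derivatives in $H^\sigma$ at fixed $t$. After applying \eqref{dseffect-x} one is left with $\|D_x^{1+a}(v\p_x v)\|_{L^1_xL^2_{yT}}$, whose worst contribution involves $D_x^{2+a}v$ --- \emph{more} derivatives than the $H^{2^-}$ data possesses. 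The paper recovers this overshoot not from the weight but from the fractional Strichartz gain in Lemma \ref{strich}, which yields at most $\frac{3\epsilon}{2(5+\epsilon)}\le 3/22$ extra derivatives; this is precisely why the paper's conclusion is $z_1(t)\in H^{2+a}$ with $a\in(0,3/22)$, just past the $C^1$ threshold, and \emph{not} $z_1(t)\in H^{s+1}$. Your claim that the Duhamel integral lands ``one derivative above $v$ on the $H^s$ scale'' would, for $s$ near $2$, require controlling $D_x^{3^-}v$ with only $H^{2^-}$ data, which these methods cannot do. Relatedly, your worry about the endpoint $s=1$ is an artifact of running the scheme at variable $s$: the paper fixes a single datum in $H^{2^-}$ and a single target $H^{2+a}$, and the weight $\langle x\rangle^r$ is used not to ``recover a missing fraction of a derivative'' but to convert the $L^1_xL^2_{yT}$ norm into a weighted $L^2_{xyT}$ norm (Cauchy--Schwarz against $\langle x\rangle^{-\gamma/2}$) so that the weighted Kato--Ponce commutator estimate of Cruz-Uribe--Naibo (Lemma \ref{cruz}) applies to $[D_x^{1+a},v]\p_xv$, with the Nahas--Ponce interpolation (Lemma \ref{commlemma}) distributing weights against derivatives; none of this machinery, nor the weighted local theory in $Z_{s,s/2}$ (Theorem \ref{lwp-zk}) needed to make sense of these norms, appears in your outline.

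Second, you treat the linear half of the theorem as given, but the statement \eqref{dispersiveblowup} requires an explicit construction and proof: the paper builds $v_0=\sum_j\alpha_j V(-j)\phi$ with $\phi(x,y)=e^{-\sqrt{x^2+y^2}}$ (which lies in $H^s\cap L^2(|(x,y)|^s dxdy)$ for all $s<2$ but fails $C^1$ only at the origin), and then proves the dichotomy --- $V(t)v_0\in C^\infty(\R^2)$ for $t>0$, $t\notin\Z^+$, while $V(n)v_0=\alpha_n\phi+(\text{a }C^\infty\text{ series})$ at integer times --- via the conjugation $w=e^{\pm(x+y)}V(t)v_0$, which satisfies the heat-dominated equation \eqref{heatslsk} and hence inherits Gaussian smoothing bounds of the form $\|\p^\lambda(e^{-x-y}V(t)v_0)\|_{L^2}\lesssim |t|^{-m/2}e^{2|t|}\|e^{-x-y}v_0\|_{L^2}$. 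Without this construction and the summability choice of the $\alpha_j$, you have neither the smoothness of $v(t)$ off integer times nor the failure of $C^1$ at $t\in\Z^+$, i.e.\ half of Theorem \ref{main1} is unproved in your proposal.
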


\begin{remark}  This result shows that the dispersive blow-up phenomenon can be established in higher dimensions for nonlinear models with nonlinearities involving derivatives.
\end{remark}

In order to prove Theorem \ref{main1} we first construct an initial data  whose linear time evolution fails to be $C^1(\R^2)$ at all positive integer 
times,
borrowing some ideas from \cite{LPS}. Then we show that, even though such a initial data is in $H^{2^{-}}(\R^2)$,
the Duhamel term $z_1(t)$
actually is slightly more regular and belongs to $H^{2^{+}}(\R^2) \subset C^1(\R^2)$ for any $t>0$. This smoothing gain in the
integral part encapsulates the dispersive blow-up phenomenon as the singularity development 
is exclusively due to the linear evolution component in \eqref{integral}.

To explain our approach we will consider the nonlinear term  $v\partial_x v$  and then  proceed to estimate it along with its derivatives  in the $x$-variable. To control $2^{+}$ derivatives in $x$ for  $z_1(t)$ in the $L^2$-norm we will first apply the dual version of the smoothing effect of Kato type (see Lemma \ref{seffect}). This
allows us to gain $1$ derivative but we still end up with  the term $D^{1^{+}}_x(v\p_x v)$ in a $L^1_xL^2_{yT}$-norm.  We notice issues are reduced 
to considering the worst term $D_x^{1^+}\partial_x v$ but  in order to avoid the use of the  $L^1_x$ norm we must introduce some weights which bring into play the local well-posedness theory for the  IVP \eqref{zks}  in weighted Sobolev spaces. Recently, Fonseca and Pachon \cite{FP} (see also \cite{BJM})
established the theory in these spaces. Then
our next concern is to estimate $D^{1^{+}}_x\p_xv$. This can be done by using the Strichartz estimates that permit a gain of some extra derivative in
the $x$-variable.  When we analyze the worst term we choose to introduce the commutator  $[D^{1^{+}}_x,v]\p_xv$, which morally is better than the 
term we have already discussed. Nevertheless this commutator  is in the $L^1_xL^2_{yT}$-norm. Again we need to introduce a weight to have this term in 
$L^2_TL^2_{xy}$-norm. At this point we have to appeal to a Kato-Ponce type commutator estimate in weighted spaces proved by Cruz-Uribe and Naibo in \cite{CN}. To control the terms after applying the commutator we will use interpolation estimates introduced by  Nahas-Ponce  \cite{NP} and Ponce \cite{Po} to distribute 
weights and derivatives. The key point here is to use Strichartz estimates and avoid Sobolev embedding. This will close the estimates.
Same arguments work when we look at the $y$-derivatives.

\vspace{3mm}

{\bf Case $k\ge 2$}.  We observe that the analysis described above can be also used to show a similar result as Theorem \ref{main1} for solutions
of the IVP \eqref{zks} for $k\ge 2$. More precisely,

\begin{theorem}\label{main2} Let $s\in [1,2)$ and $k\ge 2$. There exists
	\begin{equation*}
		v_0\in H^s(\R^2)\cap C^{\infty}(\R^2) \cap L^2(\langle x\rangle^r\,dxdy)
	\end{equation*}
	with $\|v_0\|_{H^1}\ll 1$ so that the solution $v(t)$ of the IVP associated to \eqref{zks} is
	global in time with 
	\begin{equation*}
		v\in C(\R: H^s(\R^2)) \cap \dots
	\end{equation*}
	satisfies
	\begin{equation*}
		\begin{cases}
			v( t)\in C^1(\R^2),\hskip3cm  t>0, t\notin\Z^{+},\\
			v( t) \in C^1\left(\R^2\backslash {(0,0)}\right)\backslash C^1(\R^2),\hskip15pt t\in\Z^{+}.
		\end{cases}
	\end{equation*}
\end{theorem}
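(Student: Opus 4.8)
The plan is to follow the same two-part strategy as for Theorem \ref{main1}, separating the construction of the singular datum --- a purely linear matter, insensitive to the value of $k$ --- from the smoothing estimate on the Duhamel term, which is where the higher nonlinearity $v^k\partial_x v$ must be absorbed. First I would take $v_0$ to be the datum produced in the case $k=1$, rescaled by a small constant if necessary so that $\|v_0\|_{H^1}\ll1$, and recall from the analysis borrowed from \cite{LPS} that the propagator $V(t)$ focuses at the origin precisely at the positive integer times: $V(t)v_0\in C^1(\R^2)$ for $t\notin\Z^+$, while $V(t)v_0\in C^1(\R^2\setminus\{(0,0)\})\setminus C^1(\R^2)$ for $t\in\Z^+$. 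Since multiplication by a nonzero constant preserves this, and since it is a property of the linear flow alone, it transfers to the present setting without change. The role of the smallness $\|v_0\|_{H^1}\ll1$ is to replace the automatic global theory available for $k=1$: together with the conserved mass and energy and the Gagliardo-Nirenberg inequality it keeps $\|v(t)\|_{H^1}$ uniformly small for all time, and the local well-posedness in weighted Sobolev spaces of Fonseca-Pachon \cite{FP} then propagates the class $H^{2^{-}}(\R^2)\cap L^2(\langle x\rangle^r\,dxdy)$ of the datum globally.

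Writing $v=V(t)v_0-z(t)$ with $z(t)=\int_0^tV(t-t')\,v^k(\partial_xv+\partial_yv)(t')\,dt'$ as in \eqref{integral}, the theorem reduces to the single claim that $z(t)\in H^{2^{+}}(\R^2)\subset C^1(\R^2)$ for every $t>0$; granting it, the decomposition transfers the regularity of $V(t)v_0$ verbatim to $v$. To bound $D_x^{2^{+}}z$ in $L^\infty_TL^2_{xy}$ I would apply the dual Kato smoothing of Lemma \ref{seffect} to trade one derivative, reducing matters to controlling $D_x^{1^{+}}(v^k\partial_xv)$ in a weighted $L^1_xL^2_{yT}$ norm; inserting $\langle x\rangle^{-a}\in L^2_x$ converts this into a weighted $L^2_{xyT}$ estimate, for which the weighted theory of \cite{FP} supplies the ambient control. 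Since the solution lives in the algebra $H^{2^{-}}(\R^2)$ (as $2^{-}>1$), one has $v^k\in H^{2^{-}}\hookrightarrow L^\infty$, and the fractional Leibniz rule splits $D_x^{1^{+}}(v^k\partial_xv)$ into the commutator $[D_x^{1^{+}},v^k]\partial_xv$ plus the principal term $v^kD_x^{1^{+}}\partial_xv$. I would estimate the commutator by the weighted Kato-Ponce inequality of Cruz-Uribe-Naibo \cite{CN}, which trades the unavailable bound $\|\nabla v\|_{L^\infty}$ for weighted norms and thereby avoids Sobolev embedding, distributing weights and derivatives through the interpolation inequalities of Nahas-Ponce \cite{NP} and Ponce \cite{Po}. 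The principal term, morally $v^kD_x^{2^{+}}v$, carries one derivative above the Sobolev regularity of $v$; writing $D_x^{2^{+}}v=D_x^{2^{+}}V(t)v_0-D_x^{2^{+}}z$, its first part is controlled by the Strichartz estimates of Lemma \ref{strich}, which furnish exactly this extra fractional $x$-derivative, while its second part is absorbed into the left-hand side using the smallness of $v$. By the symmetry of \eqref{zks} in $x$ and $y$, the $\partial_yv$ half and the companion $D_y^{2^{+}}z$ estimate follow from the identical computation, and together they give $z(t)\in H^{2^{+}}(\R^2)$.

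The main obstacle, and the only genuinely new feature relative to $k=1$, is the bookkeeping forced by the factor $v^k$: one must distribute the single available $1^{+}$ derivative, together with the admissible powers of the weight $\langle x\rangle$, across the $k+1$ factors of the product while keeping each factor in a norm that is already under control. I expect the smallness $\|v_0\|_{H^1}\ll1$ to be decisive twice over: it provides the global-in-time a priori bound on which everything rests, and, because the nonlinearity now vanishes to order $k+1\ge3$, it attaches a small prefactor --- a power of a small norm of $v$, obtained by interpolating the small $H^1$ norm against the bounded $H^{2^{-}}$ norm --- to each product estimate, which is exactly what makes the absorption step and the fixed-point argument for the smoothing gain close. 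A secondary technical point is to arrange the interpolation of \cite{NP,Po} so that it never loads both the full weight and the full $2^{+}$ derivatives onto a single copy of $v$; spreading them across distinct factors of $v^k$ is what keeps each resulting norm finite.
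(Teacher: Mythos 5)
Your proposal matches the paper's (deliberately omitted) argument: the paper states that Theorem \ref{main2} follows by the same analysis as Theorem \ref{main1} --- the same initial data constructed in Section \ref{initialdata} (rescaled small), the dual Kato smoothing effect, the Cruz-Uribe--Naibo commutator estimate in weighted spaces, the Nahas--Ponce/Ponce interpolation, and the Strichartz gain of $x$-derivatives --- with the smallness $\|v_0\|_{H^1}\ll 1$ used solely to guarantee global existence (citing \cite{FLP} and \cite{LPa2}), which is exactly your architecture. The one point where you diverge from the paper's mechanism is your suggestion that smallness is also decisive for closing the smoothing estimate via absorption of $D_x^{2^{+}}z$ into the left-hand side (which as stated would mix the $L^{\infty}_T L^2_{xy}$ and $L^r_{xyT}$ norms); in the paper's scheme the principal term is instead handled by substituting the Duhamel formula for $v$ into the Strichartz norm, where the fractional derivative gain $\tfrac{\epsilon\theta}{2}$ reduces the nonlinearity to $2^{-}$ derivatives controlled directly by the local theory, so no absorption or small prefactor enters there.
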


Since the proof of this result follows by employing a similar analysis as the proof of Theorem \ref{main1} we will not give the details. We point out, however, the fact that the initial data has small $H^1$ norm guarantees that the corresponding solution is indeed global in time (see \cite{FLP} and \cite{LPa2}).

\medskip

To complete our set of results we consider a generalized kind of dispersive blow-up.  We will first show that, given a time $t^{*}\neq 0$, it is possible to construct an initial data
in $H^1(\R^2)\cap W^{1,p}(\R^2)$, $p>2$,  such that the solution of the linear problem associated to the gZK equation
is not in  $W^{1,p}(\R^2)$ at time $t^*$. Moreover, we will prove that the solution of the nonlinear problem inherits such a property but this is still  provided by
the linear part of the solution. The statement will be the first part of our last theorem below.

The second result regards propagation of regularity of solutions of IVP  \eqref{zks}. 
 In \cite{ILP} Isaza, Linares and Ponce, considering suitable solutions of the IVP associated to the  generalized KdV equation
\begin{equation}\label{kdv}
\begin{cases}
\p_tu+\p_x^3u+u^k\p_xu=0, \hskip10pt x, t\in\R, \;k\in \Z^{+},\\
u(x,0)=u_0(x),
\end{cases}
\end{equation}
established the propagation of regularity in the right-hand side of the data for positive times. More precisely,
\begin{TB}[\cite{ILP}] \label{t1}
If  $u_0\in H^{{3/4}^{+}}(\R)$ and for some $\,l\in \Z^{+},\,\;l\geq 1$, and $x_0\in \R$,
\begin{equation*}
%\label{notes-3}
\|\,\partial_x^l u_0\|^2_{L^2((x_0,\infty))}=\int_{x_0}^{\infty}|\partial_x^l u_0(x)|^2dx<\infty,
\end{equation*}
then the solution of the IVP \eqref{kdv} provided by the local theory in \cite{KPV} satisfies  that for any $v>0$ and $\epsilon>0$
\begin{equation*}
%\label{notes-4}
\underset{0\le t\le T}{\sup}\;\int^{\infty}_{x_0+\epsilon -vt } (\partial_x^j u)^2(x,t)\,dx<c,
\end{equation*}
for $j=0,1, \dots, l$ with $c = c(l; \|u_0\|_{H^{{3/4}^{+}}};\|\,\partial_x^l u_0\|_{L^2((x_0,\infty))} ; v; \epsilon; T)$.

 In particular, for any $\,t\in (0,T]$, the restriction of $u(\cdot, t)$ to any interval $(x_0, \infty)$ belongs to $H^l((x_0,\infty))$.

Moreover, for any $v\geq 0$, $\epsilon>0$ and $R>0$ 
\begin{equation*}
%\label{notes-5}
\int_0^T\int_{x_0+\epsilon -vt}^{x_0+R-vt}  (\partial_x^{l+1} u)^2(x,t)\,dx dt< c,
\end{equation*}
where now  $c = c(l; \|u_0\|_{H^{{3/4}^{+}}};\|\,\partial_x^l u_0\|_{L^2((x_0,\infty))} ; v; \epsilon; R; T)$.
\end{TB}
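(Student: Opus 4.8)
The plan is to prove both assertions at once by a weighted energy method carried out in a frame moving with speed $v$, combined with an induction on the order $j=0,1,\dots,l$. The organizing principle is that the third--order dispersion transports Sobolev regularity in a preferred spatial direction, so one works with weights that are nondecreasing in $x$ and whose transition region drifts along the characteristic curves $x=x_0+\ep-vt$. The decisive mechanism is that at each level the single-derivative \emph{local smoothing gain} produced by $\p_x^3$ serves as the a priori local input for the estimate one order higher; this is exactly what allows the half-line hypothesis on $\p_x^l u_0$ to climb up to a bound on $\p_x^{l+1}u$.

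Concretely, for $0<\ep<b$ I would fix a family $\chi=\chi_{\ep,b}\in C^\infty(\R)$ with $0\le\chi\le1$, $\chi'\ge0$, $\chi\equiv0$ on $(-\infty,\ep]$ and $\chi\equiv1$ on $[b,\infty)$, with $\mathrm{supp}\,\chi'\cup\mathrm{supp}\,\chi'''\subset[\ep,b]$, and evaluate them as $\chi(x+vt-x_0)$. Writing $u_j=\p_x^j u$ and differentiating the equation $j$ times, I multiply by $2\chi\,u_j$ and integrate in space; integrating the dispersive term by parts three times yields the schematic identity
\[
\frac{d}{dt}\int u_j^2\,\chi\,dx \;+\; 3\int (u_{j+1})^2\,\chi'\,dx \;=\; \int u_j^2\bigl(v\chi'+\chi'''\bigr)\,dx \;-\; 2\int \chi\, u_j\,\p_x^j\!\left(u^k\p_x u\right)dx .
\]
The second term on the left is nonnegative since $\chi'\ge0$, and after integration in $t$ it gives precisely the spacetime bound on $\p_x^{j+1}u$ claimed in the theorem; the initial-data contribution $\int(\p_x^j u_0)^2\chi\,dx$ is finite because $\chi$ is supported in $x\ge x_0+\ep$ at $t=0$, which is where the half-line hypothesis (for $j=l$, and interpolation between $H^{3/4^+}$ and the top bound for $j<l$) enters. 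The first term on the right is localized to the transition region $[\ep,b]$ shifted along $x_0+\ep-vt$, and there the integrand is controlled by the smoothing estimate inherited from level $j-1$: this coupling is the core of the bootstrap.

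The genuine difficulty is the nonlinear term. Expanding $\p_x^j(u^k\p_x u)$ by Leibniz, the top-order piece is $u^k u_{j+1}$, and one integration by parts converts $-2\int\chi u_j u^k u_{j+1}$ into $\int\bigl(k u^{k-1}\p_x u\,\chi+u^k\chi'\bigr)u_j^2\,dx$; the first summand is absorbed by Gronwall once $u^{k-1}\p_x u\in L^1([0,T]:L^\infty)$, which follows from the local theory of \cite{KPV} through the $H^{3/4^+}$ smoothing and maximal-function estimates, while the second summand carries the favorable sign of $\chi'\ge0$ and is again localized to $[\ep,b]$. The remaining lower-order terms pair fewer derivatives of $u_j$ with extra copies of $u$ and its derivatives, and I would bound them by the interpolation inequalities that redistribute derivatives and (here, trivially weighted) factors in the spirit of \cite{NP,Po}, always keeping $\int u_j^2\chi$ as the highest quantity on the left and the localized lower-order quantities on the right. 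Summing over $j$, invoking Gronwall's inequality, and advancing the induction from $j$ to $j+1$ closes both bounds; the membership $u(\cdot,t)|_{(x_0,\infty)}\in H^l$ is then read off from the uniform-in-time estimate by letting $\ep\to0$. The main obstacle I anticipate is the precise matching of the nested weight supports, so that the smoothing gain $\int\!\int (u_{j+1})^2\,\chi'_{\ep,b}$ at level $j$ genuinely dominates the localized error terms $\int u_{j+1}^2\,\tilde\chi'$ appearing at level $j+1$, together with the delicate low-regularity bookkeeping required to keep every nonlinear contribution under control at the threshold $s=3/4^+$.
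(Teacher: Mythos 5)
The paper itself contains no proof of Theorem B---it is quoted as background from \cite{ILP}---and your proposal reconstructs essentially the argument of that reference: the weighted identity $\frac{d}{dt}\int u_j^2\chi+3\int u_{j+1}^2\chi'=\int u_j^2(v\chi'+\chi''')-2\int\chi u_j\p_x^j(u^k\p_xu)$ with translating cutoffs $\chi_{\ep,b}(x+vt-x_0)$, induction on $j$ in which the $\chi'$-localized smoothing gain at level $j$ feeds the transition-region errors at level $j+1$ through nested weight supports, and absorption of the top-order nonlinear term after one integration by parts via Gronwall using $\|u^{k-1}\p_xu\|_{L^1_TL^\infty_x}<\infty$ from the local theory of \cite{KPV}. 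The one quibble is your claim that the summand $\int\chi'\,u^k u_j^2\,dx$ carries a favorable sign---this fails for odd $k$, where $u^k$ may be negative---but it is harmless, since that term is supported in $\mathrm{supp}\,\chi'$ and bounded by $\|u\|_{L^\infty}^k\int\chi'\,u_j^2\,dx$, precisely the kind of localized transition-region quantity your nested-support bookkeeping (and the previous induction level) already controls.
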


In \cite{LP2} Linares and Ponce extended this result for solutions of the Zakharov-Kusnetzov equation. 
More precisely,

\begin{TC}[\cite{LP2}]\label{3D-regularity}   Let $u_0\in H^s(\R^2)$ with $s>2$. If for some $(a,b)\in \R^2$ satisfying
\begin{equation*}
\label{main-hyp}
a>0,\;\, \;b\geq 0\,\;\;\;\;\;\text{and}\;\;\;\;\;\,\sqrt{3}a>|b|,
\end{equation*}
 and for some $j\in \Z^+,\;j\geq 3$,
\begin{equation*}
\label{mainhyp}
N_{j}\equiv \sum_{|\alpha|=j} \;\int_{P_{\{a,b,c\}}}(\partial^{\alpha}u_0)^2(x,y)\, dxdy <\infty,
\end{equation*}
then the corresponding solution $u=u(x,y,t)$ of the IVP for the ZK equation  provided by the local theory satisfies that
for any $v\geq 0$, $\epsilon>0$ and $R>4\epsilon$
\begin{equation*}
\label{main-result}
\begin{aligned}
&\sup_{0\leq t\leq T} \sum_{|\alpha|\leq j}\int_{P_{\{a,b,c-vt+\epsilon\}}}(\partial^{\alpha}u)^2(x,y,t)\, dxdy\\
&\quad\quad+\sum_{|\alpha|=j+1} \int_0^T \int_{H_{\{a,b,c-vt+\epsilon,c-vt+R\}}} (\partial^{\alpha}u)^2(x,y,t)\, dxdy dt\\
&\leq C=C(\|u_0\|_{H^s};\{N_l:\,1\leq l\leq j\};j;a;b;v;T;\epsilon;R),
\end{aligned}
\end{equation*}
where $P_{\{a,b,c\}}$ denotes the half-space
\begin{equation*}
%\label{hyperplane}
P_{\{a,b,c\}}=\{(x,y)\in\R^2\,:\, ax+by\geq c\},
\end{equation*}
and $H_{\{a,b,c,d\}}$ denotes the strip 
\begin{equation*}
%\label{between-hyperplane}
H_{\{a,b,c,d\}}=\{(x,y)\in\R^2\,:\, c\leq ax+by\leq d\}.
\end{equation*}
\end{TC}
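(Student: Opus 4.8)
The plan is to prove this by a weighted energy method with cutoff functions that travel with the flow, following the scheme of Isaza--Linares--Ponce (Theorem B) adapted to two dimensions. Since $u_0\in H^s(\R^2)$ with $s>2$, the local (and, on the relevant time interval, global) theory furnishes a solution $u$ together with the a priori bound $\sup_{0\le t\le T}\|u(t)\|_{H^s}<\infty$; in particular $u$ and $\nabla u$ are bounded in $L^\infty(\R^2\times[0,T])$ by Sobolev embedding. I would argue by induction on the order of differentiation $j$, the hypothesis $N_j<\infty$ providing the initial localized regularity and the conclusion at level $j-1$ feeding the estimate at level $j$.

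For the inductive step, set $\theta=ax+by$ and introduce a smooth nondecreasing family $\chi=\chi_{\epsilon,R,v}(\theta+vt)$, adapted to the moving half-space $\{\theta\ge c-vt+\epsilon\}$ and the strip, so that $\chi_x=a\chi'$, $\chi_y=b\chi'$ with $\chi'\ge 0$ supported in the transition region. Applying $\partial^\alpha$ with $|\alpha|=j$ to $\partial_tu+\partial_x\Delta u+u^k\partial_x u=0$ and writing $w=\partial^\alpha u$, one gets $\partial_t w+\partial_x\Delta w=F_\alpha$, where $F_\alpha$ collects the commutator/Leibniz terms from the nonlinearity. Multiplying by $w\chi$ and integrating, the antisymmetry of $L=\partial_x\Delta$ gives $\int (Lw)\,w\chi=-\tfrac12\int w\,[L,\chi]w$; computing $[\partial_x^3+\partial_x\partial_y^2,\chi]$ and integrating by parts, the leading (smoothing) contribution is the quadratic form
$$\int\chi'\Big(\tfrac32\,a\,w_x^2+b\,w_xw_y+\tfrac12\,a\,w_y^2\Big)\,dxdy.$$
The matrix $\big(\begin{smallmatrix}\tfrac32 a & \tfrac b2\\ \tfrac b2 & \tfrac12 a\end{smallmatrix}\big)$ is positive definite precisely when $a>0$ and $\tfrac34 a^2-\tfrac14 b^2>0$, i.e. $\sqrt3\,a>|b|$ --- exactly the hypothesis on $(a,b)$ --- so this term controls $\int\chi'|\nabla w|^2$ from below and, after integrating in time, yields the localized smoothing gain of one derivative appearing in the second line of the conclusion. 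The moving-front term $-\tfrac12\int w^2\partial_t\chi=-\tfrac v2\int w^2\chi'$ is handled by Gronwall against the evolving localized energy, and the remaining commutator pieces involve at most $\partial^\beta u$ with $|\beta|\le j$ paired with derivatives of $\chi$ supported where lower-order control is already available.

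It remains to estimate the forcing $\int F_\alpha\,w\,\chi$. Expanding $\partial^\alpha(u^k\partial_x u)$ by Leibniz, the dangerous terms are those where the extra $x$-derivative together with all of $\alpha$ falls on a single factor, producing a contribution of the type $\chi\,(\partial^\alpha u)\,\partial_x\partial^\alpha u$; after integration by parts in $x$ this reduces to $\int\chi_x(\partial^\alpha u)^2$ plus lower terms controlled by $\|u\|_{L^\infty}$, $\|\nabla u\|_{L^\infty}$ and the induction hypothesis, while the genuinely $(j{+}1)$-order pieces are paired with $\chi'$ and hence dominated by the positive smoothing quadratic form above. Summing over $|\alpha|=j$, applying Gronwall, and choosing the cutoffs with nested shifted supports (the condition $R>4\epsilon$ guaranteeing enough room) lets one propagate the information successively into the moving region and closes the induction. \textbf{Main obstacle.} The crux is exactly this closure: the nonlinearity differentiates only in $x$, whereas the smoothing is gained in the oblique direction $\theta=ax+by$, so one must distribute the $x$-derivative against the $\theta$-weight and use the \emph{time-integrated} local smoothing from step $j-1$ to absorb the top-order forcing at step $j$. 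Arranging the transition regions of the cutoffs to nest correctly, so that every error term is supported where control has already been obtained and nothing is lost as the front moves, is the delicate bookkeeping on which the whole argument hinges.
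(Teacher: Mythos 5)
This statement is quoted in the paper from \cite{LP2} without proof, and your sketch reproduces precisely the method of that reference: weighted energy estimates with smooth nondecreasing cutoffs along the oblique direction $ax+by$ translated with speed $v$, induction on the number of derivatives $j$, and the key positivity of the commutator quadratic form $\int \chi'\bigl(\tfrac32 a\,w_x^2 + b\,w_xw_y + \tfrac12 a\,w_y^2\bigr)$, whose definiteness is exactly the hypothesis $a>0$, $\sqrt3\,a>|b|$. Your handling of the remaining issues --- integrating by parts the top-order nonlinear term into $\int \chi_x(\partial^\alpha u)^2$, and absorbing both the transported term $v\int w^2\chi'$ and the commutator errors via the time-integrated localized smoothing from step $j-1$ with nested cutoff supports --- likewise matches the cited argument, so the proposal is correct and takes essentially the same approach.
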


Our next result complement this last one. More precisely, we will construct  initial data in  $H^1(\R^2)\cap W^{r,p}(\R^2)$, for some $r $ and $p$, such that the singularities of 
the solutions do not propagate in any direction. This statement will be the second part of our last theorem below.

We summarize the results described above in the next theorem.

%\medskip

\begin{theorem}\label{main3} 
Fix $k\geq2$.

\begin{enumerate}
\item \label{item-i}Let $t^{*}\neq 0$ be a given real number. Then there exists $v_0\in H^1(\R^2)\cap W^{1,p}(\R^2)$, $p>2$, such that the corresponding solution of the IVP
\eqref{zks} is global in time and satisfies:
\begin{enumerate}
\item $v\in C(\R:\, H^1(\R^2))$;

\indent and

\item $v(t^{*})\notin W^{1,p}(\R^2)$ for every $p>2$.\\
\end{enumerate}
\item\label{item-ii} There exist $r>1$, $p\in(2,10)$, and an initial data
$v_0\in H^1(\R^2)\cap W^{r,p}(\R^2)$
such that the corresponding solution  of the IVP \eqref{zks} is global in time and satisfies:
\begin{enumerate}
	\item $v\in C(\R:\, H^1(\R^2))$;
	
	\indent and

	\item there exists $t>0$ such that
	\begin{equation*}%\label{thm3a}
	v( t)\notin W^{r,p}(\R^2_{+}) \text{\hskip10pt and \hskip10pt} v( -t)\notin W^{r,p}(\R^2_{+}),
	\end{equation*}
	where \begin{equation}\label{thm3a}
	\R^2_{+}:=\{(x,y)\in\R^2\;:\; y\ge 0\}.
	\end{equation}
	\end{enumerate}
The same result holds for $ \R^2_{-}:=\{(x,y)\in\R^2\;:\; y\le 0\}$.
\end{enumerate}
\end{theorem}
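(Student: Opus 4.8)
The plan is to follow the template of Theorem \ref{main1} in both parts: using Duhamel's formula \eqref{integral} I write $v(t)=V(t)v_0-z(t)$, where $z(t)$ collects the nonlinear contribution, and I choose $v_0$ so that the \emph{linear} evolution $V(t)v_0$ already carries the prescribed loss of regularity, while the Duhamel term $z(t)$ is shown to be strictly smoother for every $t$. Then the regularity defect is inherited solely from the linear component, exactly as in the dispersive blow-up of Theorem \ref{main1}. Throughout, the separable phase $\xi^3+\eta^3$ lets me treat $V(t)$ as a tensor product of two one-dimensional Airy groups, whose focusing is the source of the singularity.

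For item \eqref{item-i}, fix $t^{*}\neq0$ and build a profile $w\in H^1(\R^2)$ whose gradient has a borderline point singularity at the origin, of the type $|\nabla w(z)|\sim|z|^{-1}\bigl(\log(1/|z|)\bigr)^{-\beta}$ near $z=0$ with $\beta\in(1/2,1)$, and smooth with fast decay away from it. A computation in polar coordinates puts $w$ exactly at the threshold $w\in H^1=W^{1,2}$ while $w\notin W^{1,p}$ for \emph{every} $p>2$. Setting $v_0=V(-t^{*})w$ gives $V(t^{*})v_0=w$; the oscillatory factor $e^{i(t-t^{*})(\xi^3+\eta^3)}$ regularizes $V(t)v_0$ away from $t=t^{*}$ by the local smoothing of the group, so in particular $v_0$ is smooth and lies in $H^1\cap W^{1,p}$. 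Since $V(t)$ is unitary on $H^1$, rescaling $w$ makes $\|v_0\|_{H^1}$ as small as needed, and smallness yields, as in Theorem \ref{main2}, a global $H^1$ solution; this is (a). Granting the smoothing of $z(t)$ below, $v(t^{*})=w-z(t^{*})\notin W^{1,p}(\R^2)$ for every $p>2$, which is (b).

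For item \eqref{item-ii} I exploit the reflection symmetry $(x,y,t)\mapsto(-x,-y,-t)$ of \eqref{linear-zk}, which in operator form reads $V(t)R=R\,V(-t)$, where $R$ denotes the spatial reflection $(x,y)\mapsto(-x,-y)$. Taking a single-singularity profile $w_0$ (whose linear flow is singular at the origin only at time $0$) I set $v_0=V(-t_0)w_0+R\,V(-t_0)w_0$; the identity $V(t)R=R\,V(-t)$ then shows that $V(t_0)v_0$ and $V(-t_0)v_0$ are each singular at the origin, the remaining term being smooth in both cases. Because the origin lies on $\partial\R^2_{+}\cap\partial\R^2_{-}$, the restrictions of $v(\pm t_0)$ to either half-plane fail to be in $W^{r,p}$, which gives the statement for $\R^2_{+}$ and, by the same reflection, for $\R^2_{-}$; this is precisely the failure of one-directional propagation predicted by Theorem C. The thresholds are forced to $r>1$ and $p\in(2,10)$: the requirement $w_0\in H^1$ together with $r>1$ fixes the admissible strength of the singularity, while $p<10$ is exactly the range in which the Duhamel term can still be shown smoother via the Strichartz estimates of Lemma \ref{strich}.

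In both parts the only genuine work is the smoothing of the nonlinear Duhamel term $z(t)$, which I would establish by transporting the argument of Theorem \ref{main1} to the $W^{r,p}$ scale: the dual Kato smoothing of Lemma \ref{seffect} to gain one derivative, the derivative gain in the Strichartz estimates of Lemma \ref{strich}, and the weighted Kato--Ponce commutator estimates, now applied to $v^k(\p_xv+\p_yv)$ in $L^p$-based rather than $L^2$-based spaces. This is the main obstacle: carrying the weighted commutator and Strichartz estimates from the Hilbert-space setting of Theorem \ref{main1} into $L^p$ is delicate, it is exactly what forces $p<10$ and invokes the weighted local well-posedness theory, and it must be made uniform in $t$ so that no singularity is ever produced by the nonlinearity.
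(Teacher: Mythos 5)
Your high-level architecture (singularity carried by the linear flow, Duhamel term strictly smoother) matches the paper, and your reflection construction in part \eqref{item-ii} is essentially the paper's: since $V(t)R=RV(-t)$, for a radial profile your data $V(-t_0)w_0+RV(-t_0)w_0$ is exactly the paper's superposition $V(\tilde t\,)\widetilde v_0+V(-\tilde t\,)\widetilde v_0$, which at $t=\pm\tilde t$ returns the singular profile plus a term that has evolved to time $\pm2\tilde t$ and is regular. But there is a genuine gap at what you yourself call ``the only genuine work'': you propose to prove the smoothing of $z_k(t)$ by transporting the whole $k=1$ machinery of Theorem \ref{main1} (weighted Kato--Ponce commutators, derivative-gaining Strichartz) to $L^p$-based spaces, and you admit this is unresolved. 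That program is unnecessary and is not how the constraint $p<10$ arises. The theorem fixes $k\ge2$ precisely so that the nonlinearity $v^k(\p_xv+\p_yv)$ admits a \emph{full} derivative gain in plain $L^2$-based spaces: Proposition \ref{smooth-zk} gives $z_k(t)\in C([-T,T]:H^2(\R^2))$ using only the dual Kato smoothing \eqref{dseffect-x}, H\"older, and the local-theory norms of Theorem \ref{lwp-mzk} --- no weights, no commutators, no $L^p$ estimates. Then $W^{1,p}$-regularity of $z_k$ (part \eqref{item-i}, any $p\ge2$) and $W^{r,p}$-regularity (part \eqref{item-ii}) follow from the Sobolev embedding of $H^2(\R^2)$. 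The restriction $p\in(2,10)$ is forced by $H^2(\R^2)\hookrightarrow W^{r,p}(\R^2)$ with $r=1+\frac{p-2}{4p}$, i.e.\ $1-\frac{p-2}{4p}\ge1-\frac2p\iff p\le10$; the Strichartz gain $\frac{p-2}{4p}$ enters only to regularize the \emph{linear} evolutions $V(\pm\tilde t\,)\widetilde v_0$, $V(\pm2\tilde t\,)\widetilde v_0$ at suitable times, determining $r$, not the Duhamel term. So your attribution of $p<10$ to an $L^p$-Strichartz smoothing of $z_k$ is wrong, and without Proposition \ref{smooth-zk} (or a substitute) your proof does not close.

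A second, smaller but real gap is in part \eqref{item-i}: you justify $v_0=V(-t^*)w\in H^1\cap W^{1,p}$ (and its smoothness) by ``the local smoothing of the group.'' Kato-type smoothing is local in space and $L^2$-averaged in time; since $V(t)$ is unitary on every $H^s$, it cannot produce global $W^{1,p}$ regularity from $H^1$ data. The correct mechanism, used in the paper, needs the extra integrability $\phi\in W^{1,1}(\R^2)$: interpolate the dispersive bound \eqref{strich-x0}, $\|V(t)\p_x\phi\|_{L^\infty_{xy}}\lesssim|t|^{-2/3}\|\p_x\phi\|_{L^1_{xy}}$, with $L^2$ unitarity to get $V(t)\phi\in W^{1,p}(\R^2)$ for all $p\ge2$ and $t\neq0$. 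Your logarithmically-singular profile, with $|\nabla w|\sim|z|^{-1}(\log(1/|z|))^{-\beta}$, $\beta\in(1/2,1)$, does land in $H^1\cap W^{1,1}$ while missing every $W^{1,p}$, $p>2$ (and is a nice explicit alternative to the paper's unspecified $\phi$), so the construction is salvageable --- but you must replace the local-smoothing argument by the dispersive interpolation, and you should also note, as the paper does, that in part \eqref{item-ii} the good times $\pm\tilde t$, $\pm2\tilde t$ must be chosen simultaneously from the almost-everywhere-in-$t$ conclusion of the Strichartz estimates.
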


To prove this theorem we extend previous analysis introduced in \cite{Li} and \cite{LPS}. The main ingredient is the smoothness of the Duhamel term as before. In this case 
we use pure Sobolev spaces, combined with the smoothing effect of Kato type; the main tool being used. More precisely, we have 

\begin{proposition}\label{smooth-zk} Fix $k\geq2$. Let $v_0\in H^s(\R^2)$, $s= 1, 2, \dots$, and $v \in C([-T,T]:H^s(\R^2))$ the local solution of the IVP \eqref{zks}. Then 
\begin{equation*}
z_k(t)=\int_0^t V(t-t') v^k\left( \partial_x v+\partial_y v\right) (t') \,dt' \in C([-T,T] : H^{s+1}(\R^2)).
\end{equation*}
\end{proposition}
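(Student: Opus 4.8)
The plan is to bound $s+1$ full derivatives of $z_k(t)$ in $L^2(\R^2)$, uniformly for $t\in[-T,T]$, and then upgrade this uniform bound to continuity in time. By the symmetry of \eqref{zks} in the two spatial variables it suffices to treat the $x$-derivatives and the term $v^k\p_xv$; the $y$-derivatives and the term $v^k\p_yv$ are handled identically. First I would write $z_k(t)=\int_0^tV(t-t')F(t')\,dt'$ with $F=v^k\p_xv$ and apply the dual version of the Kato smoothing effect (Lemma \ref{seffect}) to $D_x^{s+1}z_k$. This gains one full derivative, so that controlling $\|D_x^{s+1}z_k\|_{L^{\infty}_TL^2_{xy}}$ is reduced to controlling the nonlinearity with only $s$ derivatives, measured in the local-smoothing dual norm
$$\|D_x^s(v^k\p_xv)\|_{L^1_xL^2_{yT}}.$$

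Next I would expand $D_x^s(v^k\p_xv)$ by the Leibniz rule and estimate each resulting term. The decisive one is that in which all available derivatives fall on a single factor, namely $v^kD_x^{s+1}v$. Splitting by H\"older,
$$\|v^kD_x^{s+1}v\|_{L^1_xL^2_{yT}}\le \|v^k\|_{L^1_xL^{\infty}_{yT}}\,\|D_x^{s+1}v\|_{L^{\infty}_xL^2_{yT}},$$
the factor $\|D_x^{s+1}v\|_{L^{\infty}_xL^2_{yT}}$ is finite by the homogeneous Kato smoothing effect applied to the solution $v$ itself, bounded by a constant depending on $\|v\|_{L^{\infty}_TH^s}$ and $T$; this is precisely the one-derivative gain that compensates the apparent loss. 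The companion factor $\|v^k\|_{L^1_xL^{\infty}_{yT}}$ is where the hypothesis $k\ge2$ is essential: distributing the $k$ factors of $v$ equally into $L^{k}_x(L^{\infty}_{yT})$, each factor is finite by one-dimensional Sobolev embedding in $y$ together with $v\in L^{\infty}_TH^s$, $s\ge1$, and the $k\ge2$ factors are exactly what yield total $x$-integrability $L^1_x$. A single factor ($k=1$) cannot be placed in $L^1_x$ by Sobolev embedding alone, which is why Theorem \ref{main1} must resort to weighted spaces, whereas for $k\ge2$ pure Sobolev spaces suffice.

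The remaining Leibniz terms place at least one derivative on a factor of $v^k$, hence carry strictly fewer derivatives on the smoothing factor, and are controlled by the same combination of Kato smoothing and Sobolev embedding with room to spare. Having established $\sup_{t\in[-T,T]}\|z_k(t)\|_{H^{s+1}}<\infty$, continuity in time follows in the standard way: for $t$ near $t_0$ one writes $z_k(t)-z_k(t_0)=\int_{t_0}^tV(t-t')F\,dt'+(V(t-t_0)-I)z_k(t_0)$, the first term tending to $0$ in $H^{s+1}$ by the integral estimate over the shrinking interval and the second by strong continuity of the unitary group $V$ on $H^{s+1}$ applied to the element $z_k(t_0)\in H^{s+1}$. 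I expect the main obstacle to be the top-order term $v^kD_x^{s+1}v$: one must match the one-derivative deficit exactly through the local smoothing effect for the nonlinear solution while simultaneously placing the companion factor $v^k$ in $L^1_x$, and it is only the extra factors of $v$ available when $k\ge2$ that make the latter possible without introducing weights.
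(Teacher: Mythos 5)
Your skeleton---the dual Kato smoothing \eqref{dseffect-x} to trade $s+1$ derivatives on $z_k$ for $s$ derivatives on the nonlinearity in $L^1_xL^2_{yT}$, a Leibniz expansion, the local-smoothing norm $\|D_x^{s}\p_x v\|_{L^\infty_xL^2_{yT}}$ for the top-order factor, and the standard continuity-in-time argument---is exactly the paper's scheme, but there is a genuine gap at the decisive step. You claim that $\|v\|_{L^k_xL^\infty_{yT}}$ is finite ``by one-dimensional Sobolev embedding in $y$ together with $v\in L^\infty_TH^s$''. This fails because in the mixed norm $L^k_xL^\infty_{yT}$ the supremum in time sits \emph{inside} the $x$-integral. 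Pointwise in time, Gagliardo--Nirenberg in $y$ plus Cauchy--Schwarz in $x$ does give $\|v(t)\|_{L^2_xL^\infty_y}\lesssim\|v(t)\|_{H^1}$, hence $v\in L^\infty_TL^2_xL^\infty_y$; but $\int_x\sup_{t,y}|v|^2\,dx$ is not dominated by $\sup_t\int_x\sup_y|v|^2\,dx$, and no embedding argument interchanges $\sup_t$ with the $x$-integration (trying to move the time-sup inside via the fundamental theorem of calculus brings in $\p_t v=-\p_x^3v-\p_y^3v-\cdots$, i.e., three derivatives, unavailable at $s=1$). The finiteness of $\|u\|_{L^2_xL^\infty_{yT}}$ is a nontrivial maximal-function estimate for the group $V(t)$---Lemma \ref{strich-mzk}, valid for $s>3/4$ and proved by oscillatory-integral methods, not by embedding---and it enters the proposition through the local well-posedness class: property \eqref{mzk-4} of Theorem \ref{lwp-mzk}. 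The paper's bound for your top-order term is precisely \eqref{dx-2}: $A_1\lesssim\|u\|^2_{L^2_xL^\infty_{yT}}\|\p_x^2u\|_{L^\infty_xL^2_{yT}}$, both factors being resolution norms \eqref{mzk-2}, \eqref{mzk-4} from the contraction argument (for $k>2$ the extra factors of $u$ go in $L^\infty$, the same two maximal factors producing the $L^1_x$ integrability). The same caveat applies to your treatment of $\|D_x^{s+1}v\|_{L^\infty_xL^2_{yT}}$: it is finite not because the homogeneous linear smoothing ``applies to the solution'', but because this norm is built into the function space of Theorem \ref{lwp-mzk}.

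Two smaller points. Your symmetry reduction is incomplete: besides $\p_x^{s+1}$ acting on the $v^k\p_x v$ piece one must also treat $\p_y^{s+1}$ of that \emph{same} piece (the genuinely mixed case), which is the symmetric image of $\p_x^{s+1}$ on $v^k\p_y v$, not of the diagonal case; the paper handles it in \eqref{dy-1}--\eqref{dy-3}, where the term $B_2=\|u\,\p_yu\,\p_xu\|_{L^1_xL^2_{yT}}$ requires the Strichartz norm $\|\p_yu\|_{L^{12/5}_TL^\infty_{xy}}$ of \eqref{mzk-3}---again a local-theory norm, not an embedding. Finally, your heuristic about $k=1$ versus $k\ge2$ is right in spirit but rests on the wrong mechanism: what $k\ge2$ buys is two factors in the maximal norm $L^2_xL^\infty_{yT}$, whose product lies in $L^1_xL^\infty_{yT}$; for $k=1$ the single factor only reaches $L^2_x$, which is why Theorem \ref{main1} resorts to weights. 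Once you replace ``Sobolev embedding'' by the maximal-function and Strichartz norms supplied by Theorem \ref{lwp-mzk}, your argument becomes the paper's proof; the concluding continuity argument you sketch is fine.
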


The argument to prove Proposition \ref{smooth-zk} employs the smoothing properties of solutions of the linear problem and the properties of the solution of the IVP itself. We will give the
details only for the case $k=2$. For the other nonlinearities we shall follow the arguments given here and those in \cite{LPa2} where the local theory
for the IVP \eqref{zk-2d} is established in $H^s(\R^2)$, for $s>s_0$, $0<s_0<1$. This latter result implies global well-posedness of the IVP in $H^s(\R^2)$,
$s\ge 1$, whenever the initial data are small enough.

\medskip

The paper is organized as follows. In Section \ref{estimates} we recall estimates and results for the linear and
nonlinear ZK and gZK equations, that will be needed in the proofs that follow. In Section \ref{initialdata} we construct
the smooth initial data whose linear evolution develops singularities at all future integer times and which 
embodies the dispersive blow-up phenomenon. In Section \ref{nonlinearsmoothing1} we prove the nonlinear smoothing
effect for the ZK equation ($k=1$), which in this particular case consists in showing that for initial data in
$H^{2^-}(\R^2)$ the Duhamel term  corresponding to the nonlinear evolution
is actually slightly more regular, it belongs to $H^{2^+}(\R^2)$. Finally, in Section \ref{nonlinearsmoothing2} we prove  Theorem \ref{main3} and
Proposition \ref{smooth-zk}.
%nonlinear smoothing effect for the mZK equation ($k=2$) by showing that, for initial data in $H^2(\R^2)$ the 
%Duhamel term of the nonlinear evolution is in $H^3(\R^2)$.

%\section{computations}
%
%Consider solutions of the IVP associated to the generalized Zakharov-Kuznetsov equation,
%\begin{equation}\label{g-ZK}
%\begin{cases}
%\p_t u+\p_x^3u+\p_x\p_y^2u+u^k\p_xu=0, \hskip5pt (x,y)\in\R^2, t>0, k\in\Z^{+},\\
%u(x,y,0)=u_0(x,y)
%\end{cases}
%\end{equation}
%
%
%\begin{itemize}
%\item Introduction
%\item Preliminary Estimates
%\item  Construction data
%\item  Case $k=1$
%\item  Case $k\ge 2$
%\end{itemize}
%
%
%Dispersive Blow-up

\section{Notation and Preliminary Estimates}\label{estimates}

In this section we introduce some notation used throughout the paper and describe some smoothing properties of solutions of the linear  IVP  associated  to the generalized 
Zakharov-Kuznetsov equation in \eqref{zks}. We use $c$ to denote various constants that may vary line by line. Let $a$ and $b$ be positive real numbers, the
notation $a \lesssim b$ means that there exists a positive constant $c$ such that $a \leq cb$.
Given a real number $r$, we use $r^+$ and $r^-$ to denote, respectively, $r +\varepsilon$ and $r -\varepsilon$ for some $\varepsilon > 0$ sufficiently small.

Let $ q,r \geq1$ and $I\subset\R$ an interval; the mixed norm in the spaces $L^q_{I}L^r_{xy}$  of a function $f=f(x,y,t)$ is defined as
$$
\|f\|_{L^q_{I}L^r_{xy}}=\left(\int_I\|f(\cdot,t)\|^q_{L^r_{xy}}dt\right)^{\frac{1}{q}},
$$
with the usual modifications if either $q=\infty$ or $r=\infty$. If $I=[-T,T]$ we use the notation $\|f\|_{L^q_{T}L^r_{xy}}$. Moreover, if $I=\mathbb{R}$ we shall use  $\|f\|_{L_t^qL^r_{xy}}$. Similar definitions and considerations may be made interchanging the variables $x,y$, and $t$.

Given a function $g=g(x,y)$, its Fourier transform will be denoted by $\widehat{g}$. Given $s\in\R$ and $p\geq1$, $W^{s,p}=W^{s,p}(\R^2)$ denotes the usual Sobolev space. The standard $L^2$-based Sobolev space, that is, when $p=2$, will be denoted by $H^s:=H^s(\R^2)$. For any $s,r\in\R$, $Z_{s,r}$ denotes the weighted Sobolev space
$$
Z_{s,r}:=H^s(\R^2)\cap L^2((1+x^2+y^2)^rdxdy).
$$

For a function $f$ defined on $\R^n$, $n\geq1$, $D^sf$ and $J^sf$ are defined, via Fourier transform, as
$$
\widehat{D^sf}(\zeta)=|\zeta|^s\widehat{f}(\zeta) \quad \mbox{and}\quad
\widehat{J^sf}(\zeta)=\langle \zeta \rangle^s\widehat{f}(\zeta), \quad \zeta\in\R^n,
$$
where, as it is customary, we use the notation $\langle \zeta \rangle^s=(1+ |\zeta|^2)^{s/2}$.
Given a function $f=f(x,y)$ defined on $\R^2$, by $D^s_xf$ and $D^s_yf$ we denote the fractional derivatives of order $s$ with respect to the $x$ and $y$ variables, that is, via Fourier transform, 
$$
\widehat{D_x^sf}(\xi,\eta)=|\xi|^s\widehat{f}(\xi,\eta) \quad \mbox{and}\quad \widehat{D_y^sf}(\xi,\eta)=|\eta|^s\widehat{f}(\xi,\eta).
$$
 We also define  $J^s_x$ and $J^s_y$  as
$$
\widehat{J_x^sf}(\xi,\eta)=\langle \xi \rangle^s\widehat{f}(\xi,\eta), \quad \mbox{and}\quad \widehat{J_y^sf}(\xi,\eta)=\langle \eta \rangle^s\widehat{f}(\xi,\eta).
$$

Next we present the smoothing properties of the solutions of the linear problem \eqref{linear-zk}. We start by recalling the dispersive estimates
\begin{equation}\label{strich-x0}
\|D^{\theta\epsilon}_x V(t)v_0\|_{L^p_{xy}}\leq c|t|^{-\frac{\theta(2+\varepsilon)}{3}}\|v_0\|_{L^{p'}_{xy}}, \quad t\neq0,
\end{equation}
and
\begin{equation}\label{strich-y0}
\|D^{\theta\epsilon}_y V(t)v_0\|_{L^p_{xy}}\leq c|t|^{-\frac{\theta(2+\varepsilon)}{3}}\|v_0\|_{L^{p'}_{xy}},  \quad t\neq0,
\end{equation}
where $0\leq \varepsilon\leq1/2$, $0\leq \theta\leq1$, $p=\frac{2}{1-\theta}$, and $\frac{1}{p}+\frac{1}{p'}=1$ (see \cite{BJM} and \cite{LPa2}). This inequalities imply the
Strichartz estimates for the linear propagator.

\begin{lemma}\label{strich}
The solution of the linear problem \eqref{linear-zk} satisfies
\begin{equation}\label{strich-x}
\|D^{\frac{\theta\epsilon}{2}}_x V(t)v_0\|_{L^q_tL^p_{xy}}\le c\|v_0\|_{L^2_{xy}}
\end{equation}
and 
\begin{equation}\label{strich-y}
\|D^{\frac{\theta\epsilon}{2}}_y V(t)v_0\|_{L^q_tL^p_{xy}}\le c\|v_0\|_{L^2_{xy}},
\end{equation}
for $\epsilon\in [0,1/2]$, $\theta\in [0,1]$, $p=\frac{2}{1-\theta}$, and $q=\frac{6}{\theta(2+\epsilon)}$.
\end{lemma}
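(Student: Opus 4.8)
The plan is to deduce the Strichartz estimates \eqref{strich-x}--\eqref{strich-y} from the dispersive estimates \eqref{strich-x0}--\eqref{strich-y0} via the standard $TT^*$ duality argument. Since the propagator $V(t)$ factors as a product of one-dimensional KdV-type groups in the symmetrized coordinates, $\widehat{V(t)v_0}(\xi,\eta)=e^{it(\xi^3+\eta^3)}\widehat{v}_0(\xi,\eta)$, the dispersive decay in each variable already isolates the $x$ and $y$ directions, so I would treat \eqref{strich-x} and \eqref{strich-y} identically and focus on the $x$-estimate. First I would set $U(t)=D_x^{\theta\epsilon/2}V(t)$ and observe that proving \eqref{strich-x} is equivalent, by duality, to bounding the operator $U(t)U(s)^*$ appropriately; concretely, it suffices to show the boundedness of the bilinear form
\begin{equation*}
\Big|\int\!\!\int \langle U(t)^*f(\cdot,t),\, U(s)^*g(\cdot,s)\rangle\, dt\, ds\Big| \lesssim \|f\|_{L^{q'}_tL^{p'}_{xy}}\|g\|_{L^{q'}_tL^{p'}_{xy}},
\end{equation*}
which reduces to an estimate for the kernel $U(t)U(s)^* = D_x^{\theta\epsilon}V(t-s)$ acting between $L^{p'}$ and $L^p$.

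The core analytic input is then exactly the dispersive bound \eqref{strich-x0}: with the full derivative $D_x^{\theta\epsilon}$ (note the factor of two in the exponent between the single-estimate and the Strichartz statement, which is absorbed precisely because $U(t)U(s)^*$ stacks two copies of $D_x^{\theta\epsilon/2}$), one has
\begin{equation*}
\|D_x^{\theta\epsilon}V(t-s)h\|_{L^p_{xy}} \le c\,|t-s|^{-\frac{\theta(2+\epsilon)}{3}}\|h\|_{L^{p'}_{xy}},
\end{equation*}
for $p=\frac{2}{1-\theta}$ and its conjugate $p'$. The temporal integration then becomes a one-dimensional fractional integration (Hardy--Littlewood--Sobolev) problem: I would insert this decay into the double time integral and apply the Hardy--Littlewood--Sobolev inequality in the $t$-variable, which requires the decay exponent and the Lebesgue exponents to satisfy the scaling relation
\begin{equation*}
\frac{2}{q'} = 2 - \frac{\theta(2+\epsilon)}{3} + \Big(-1\Big),
\end{equation*}
equivalently $\frac{2}{q}=\frac{\theta(2+\epsilon)}{3}$, which is exactly the admissibility condition $q=\frac{6}{\theta(2+\epsilon)}$ stated in the lemma. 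Verifying that this choice of $q$ makes the singularity $|t-s|^{-\theta(2+\epsilon)/3}$ fall in the allowed Hardy--Littlewood--Sobolev range is the point at which the hypotheses $\epsilon\in[0,1/2]$ and $\theta\in[0,1]$ get used.

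The main obstacle I anticipate is not conceptual but bookkeeping: one must confirm that the admissibility exponents genuinely lie in the range where Hardy--Littlewood--Sobolev applies (i.e. that the singularity exponent stays strictly between $0$ and $1$ away from the endpoint, or handle the endpoint separately) and that the spatial exponent $p=\frac{2}{1-\theta}$ is consistent at every stage, including the degenerate cases $\theta=0$ (where the estimate is just the $L^2_{xy}$ conservation of $V(t)$, by unitarity) and $\theta=1$. A secondary technical point is justifying the $TT^*$ reduction rigorously for the fractional operator $D_x^{\theta\epsilon/2}$ composed with $V(t)$, which is standard since $D_x^{\theta\epsilon/2}$ is a Fourier multiplier commuting with $V(t)$, but should be stated. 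Once the $x$-estimate is in place, \eqref{strich-y} follows verbatim by the symmetry of \eqref{zks} in the $x$ and $y$ variables, using \eqref{strich-y0} in place of \eqref{strich-x0}.
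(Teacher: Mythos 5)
Your proposal is correct and takes essentially the same route the paper relies on: the paper's proof of Lemma \ref{strich} is just a citation to \cite{LPa2} and Section 2 of \cite{BJM}, where the estimate is obtained precisely by your argument---the $TT^{*}$ reduction of $D_x^{\theta\epsilon/2}V(t)$ (commuting the Fourier multiplier through the group so that $U(t)U(s)^{*}=D_x^{\theta\epsilon}V(t-s)$), the dispersive bound \eqref{strich-x0}, and Hardy--Littlewood--Sobolev in time, with your checks that $\frac{\theta(2+\epsilon)}{3}\le \frac{5}{6}<1$ keeps one strictly inside the HLS range and that $\theta=0$ is just unitarity. One bookkeeping slip worth fixing: your displayed scaling relation $\frac{2}{q'}=2-\frac{\theta(2+\epsilon)}{3}+(-1)$ contains a spurious $-1$ (the bilinear HLS condition in one dimension is $\frac{2}{q'}+\frac{\theta(2+\epsilon)}{3}=2$), but the condition you actually extract from it, $\frac{2}{q}=\frac{\theta(2+\epsilon)}{3}$, i.e.\ $q=\frac{6}{\theta(2+\epsilon)}$, is the correct admissibility relation, so the argument goes through as written.
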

\begin{proof}
Inequality \eqref{strich-x} was originally proved for solutions of the linear problem associated with equation \eqref{zk-2d}. However, the proof of the lemma follows the same strategy (see also Section 2 in \cite{BJM}).
\end{proof}

\begin{remark}\label{rem1} Observe that $p=q$ in Lemma \ref{strich} if and only if $\theta=\frac{3}{5+\epsilon}$. More precisely, if and only if $p=q=\frac{2(5+\epsilon)}{2+\epsilon}$.
\end{remark}

As a consequence of Lemma \ref{strich} we obtain.

\begin{corollary}\label{stri0}
For any $0\leq\varepsilon\leq1/2$ we have
$$
\|D_x^{\frac{\varepsilon}{2}}V(t)v_0\|_{L^{12/5}_TL^{\infty}_{xy}}\leq c_T \|v_0\|_{L^2_{xy}}
$$
and
$$
\|D_y^{\frac{\varepsilon}{2}}V(t)v_0\|_{L^{12/5}_TL^{\infty}_{xy}}\leq c_T \|v_0\|_{L^2_{xy}},
$$
where $c_T$ is a constant depending on $T$. 
\end{corollary}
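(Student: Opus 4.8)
The plan is to specialize the Strichartz estimate of Lemma \ref{strich} to the endpoint exponent $\theta = 1$ and then exploit the finiteness of the time interval $[-T,T]$ through H\"older's inequality. Throughout I identify the parameter $\varepsilon$ of the corollary with the parameter $\epsilon$ appearing in Lemma \ref{strich}.

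First I would set $\theta = 1$ in \eqref{strich-x}. This is admissible, since it lies at the right endpoint of the range $\theta \in [0,1]$, and it produces the spatial exponent $p = \infty$ together with the derivative order $\frac{\theta\varepsilon}{2} = \frac{\varepsilon}{2}$ that appears in the corollary. The associated temporal exponent is then $q = \frac{6}{\theta(2+\varepsilon)} = \frac{6}{2+\varepsilon}$, so that \eqref{strich-x} specializes to
\[
\|D_x^{\varepsilon/2} V(t) v_0\|_{L^{6/(2+\varepsilon)}_t L^\infty_{xy}} \leq c\,\|v_0\|_{L^2_{xy}}.
\]

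The key observation is that, as $\varepsilon$ runs over $[0,1/2]$, the exponent $q = \frac{6}{2+\varepsilon}$ runs over the interval $[\tfrac{12}{5},3]$; in particular $q \geq \tfrac{12}{5}$ for every admissible $\varepsilon$, with equality precisely at the endpoint $\varepsilon = 1/2$. Since $q \geq \tfrac{12}{5}$ and the time interval $[-T,T]$ has finite length, I would first restrict the global-in-time norm above to $[-T,T]$, which only decreases it, and then apply H\"older's inequality in the $t$ variable on this bounded interval to lower the temporal exponent from $q$ down to $\tfrac{12}{5}$. Concretely,
\[
\|f\|_{L^{12/5}_T L^\infty_{xy}} \leq (2T)^{\frac{5}{12}-\frac{1}{q}}\,\|f\|_{L^{q}_T L^\infty_{xy}},
\]
where $\frac{5}{12}-\frac{1}{q} \geq 0$ precisely because $q \geq \tfrac{12}{5}$. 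Combining the two displays gives the claimed bound with $c_T = c\,(2T)^{\frac{5}{12}-\frac{1}{q}}$; as $q$ remains in the compact range $[\tfrac{12}{5},3]$, this constant may in fact be chosen uniformly in $\varepsilon$. The estimate for $D_y$ follows verbatim from \eqref{strich-y}.

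There is no genuine obstacle here: the content is simply to select the endpoint value $\theta = 1$ and to check that the largest admissible choice $\varepsilon = 1/2$ produces the temporal Strichartz exponent exactly $\tfrac{12}{5}$, so that for all smaller $\varepsilon$ the resulting loss of integrability in time is harmless on a bounded interval. The only point needing mild care is that Lemma \ref{strich} is stated with the full-line norm $L^q_t$ whereas the corollary lives on $[-T,T]$; this discrepancy is reconciled by the trivial restriction inequality followed by the H\"older step, which is precisely where the $T$-dependence of the constant $c_T$ enters.
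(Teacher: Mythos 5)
Your proof is correct and is exactly the standard derivation behind this corollary: the paper itself only cites Lemma 2.3 of \cite{LPa1}, and that lemma is obtained precisely by taking the endpoint $\theta=1$ in the Strichartz estimates (so $p=\infty$, $q=\frac{6}{2+\varepsilon}\in[\tfrac{12}{5},3]$) and lowering the temporal exponent to $\tfrac{12}{5}$ via H\"older on the bounded interval $[-T,T]$. Your handling of the restriction step and the $T$-dependence of $c_T$ is accurate, so there is nothing to add.
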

\begin{proof}
See Lemma 2.3 in \cite{LPa1}.
\end{proof}

The smoothing effects of Kato's type are given in the following lemma.
\begin{lemma}\label{seffect}
 The solution of the linear problem \eqref{linear-zk} satisfies
\begin{equation}\label{seffect-x}
\|\nabla V(t)v_0\|_{L^{\infty}_x L^2_{yt}}\le c\|v_0\|_{L^2_{xy}}
\end{equation}
and
\begin{equation}\label{dseffect-x}
\sup_{t\in[-T,T]}\|\nabla \int_0^t V(t-t') g(\cdot,t')\,dt'\|_{L^2_{xy}}\le c\|g\|_{L^1_xL^2_{yT}}.
\end{equation}

Same estimates hold interchanging $y$ in place of $x$.
\end{lemma}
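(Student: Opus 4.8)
The plan is to prove the homogeneous estimate \eqref{seffect-x} by a direct Fourier computation that exploits the product structure of the symbol $\xi^3+\eta^3$, and then to deduce the dual (retarded) estimate \eqref{dseffect-x} from it by a $TT^\ast$ duality argument together with the Christ--Kiselev lemma.

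First I would establish \eqref{seffect-x}. Writing
$$\p_x V(t)v_0(x,y)=c\int_{\R^2}i\xi\,e^{i(x\xi+y\eta)}e^{it(\xi^3+\eta^3)}\widehat{v}_0(\xi,\eta)\,d\xi\,d\eta,$$
I would fix $x$ and compute the $L^2_{yt}$ norm by Plancherel in the pair $(y,t)$. The $t$-integration produces a factor $\delta(\tau-\xi^3-\eta^3)$, which localizes the frequencies to the characteristic variety $\tau=\xi^3+\eta^3$; solving for $\xi$ at fixed $(\eta,\tau)$ introduces the Jacobian $\p_\xi(\xi^3+\eta^3)=3\xi^2$. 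The key algebraic cancellation is that this Jacobian exactly absorbs the weight $|\xi|^2$ coming from the derivative $\p_x$, so that after the change of variables $\tau\mapsto\xi$ one is left with
$$\|\p_x V(t)v_0(x,\cdot,\cdot)\|_{L^2_{yt}}^2=c\int_{\R^2}|\widehat{v}_0(\xi,\eta)|^2\,d\xi\,d\eta=c\|v_0\|_{L^2_{xy}}^2,$$
whose right-hand side is independent of $x$ (since $|e^{ix\xi}|=1$); the supremum in $x$ is therefore free and yields \eqref{seffect-x} for the $x$-derivative. The companion component of $\nabla$ is treated by the identical scheme, the roles of $x$ and $y$ being exchanged by the symmetry of the symbol under $(\xi,\eta)\mapsto(\eta,\xi)$; this is precisely the content of ``the same estimates hold interchanging $y$ in place of $x$''.

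Next I would derive \eqref{dseffect-x} by duality. Using the pairing between $L^\infty_xL^2_{yt}$ and $L^1_xL^2_{yt}$, the estimate \eqref{seffect-x} for the operator $S\colon v_0\mapsto \p_x V(\cdot)v_0$ is equivalent to the bound $\|S^\ast h\|_{L^2_{xy}}\le c\|h\|_{L^1_xL^2_{yt}}$ for its adjoint $S^\ast h=\int_{\R}\p_x V(-t')h(\cdot,t')\,dt'$. Since $V(t)$ is unitary on $L^2_{xy}$ and commutes with $\p_x$, for each fixed $t$ one has
$$\Big\|\p_x\!\int_0^t V(t-t')g(t')\,dt'\Big\|_{L^2_{xy}}=\Big\|\int_0^t \p_x V(-t')g(t')\,dt'\Big\|_{L^2_{xy}},$$
so it only remains to replace the full-line integral $S^\ast$ by the retarded one $\int_0^t$. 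Because the time exponent of the target (the $L^\infty_t$ supremum defining $\sup_{t\in[-T,T]}$) strictly exceeds that of the source ($L^2$ in $t$ inside $L^1_x$), this truncation is supplied by the Christ--Kiselev lemma, which gives \eqref{dseffect-x}; the $y$-version again follows by symmetry.

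I expect the two genuinely delicate points to be the following. First, in \eqref{seffect-x}, the exact matching between the order of the derivative and the vanishing order $3\xi^2$ of $\p_\xi(\xi^3+\eta^3)$ along the characteristic surface: it is this match that produces the gain of a full derivative and that forces the smoothing direction to be tied to the $L^\infty$ variable. Second, in \eqref{dseffect-x}, the passage from the non-retarded $TT^\ast$ bound to the Duhamel integral requires the Christ--Kiselev lemma, whose hypotheses must be verified against the mixed-norm spaces $L^1_xL^2_{yt}$ and $L^\infty_tL^2_{xy}$.
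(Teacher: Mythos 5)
Your Plancherel computation correctly proves the matched estimate $\|\p_x V(t)v_0\|_{L^\infty_xL^2_{yt}}\le c\|v_0\|_{L^2_{xy}}$: after the change of variables $\tau=\xi^3+\eta^3$ at fixed $\eta$, the weight is $\xi^2\cdot(3\xi^2)^{-1}$, a constant, and the result is independent of $x$. By the symmetry $(\xi,\eta)\mapsto(\eta,\xi)$ this also gives $\|\p_y V(t)v_0\|_{L^\infty_yL^2_{xt}}\le c\|v_0\|_{L^2_{xy}}$. But your claim that the ``companion component of $\nabla$'' follows by the identical scheme is a genuine gap: the symmetry exchanges \emph{both} the derivative and the outer norm variable, so it does not yield $\|\p_y V(t)v_0\|_{L^\infty_xL^2_{yt}}$. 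Your own method shows why that mismatched component actually fails for the symmetrized symbol: with multiplier $i\eta$ the weight after the change of variables is $\eta^2/(3\xi^2)$, which is unbounded as $\xi\to0$ with $|\eta|$ large. (For the original ZK symbol $\xi(\xi^2+\eta^2)$ the Jacobian is $3\xi^2+\eta^2\ge\xi^2+\eta^2$, which is why the full-gradient statement holds there; that is the content of Theorem 2.2 in \cite{F}.) So the lemma's ``$\nabla$'' must be read as the matched pairs, which is all the paper ever uses: in \eqref{dbu2} the gained derivative $\p_x$ is paired with $L^1_xL^2_{yT}$, and in \eqref{fdy-0a} the gained $\p_y$ with $L^1_yL^2_{xT}$. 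Note also that the paper does not prove the lemma at all; it cites \cite{F} and \cite{BJM} and merely remarks that \eqref{dseffect-x} is the dual of \eqref{seffect-x}, so your first computation is welcome self-contained content, modulo the caveat above.

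Your appeal to Christ--Kiselev in the second step is both unnecessary and, as invoked, unjustified: the standard Christ--Kiselev lemma upgrades non-retarded to retarded estimates between spaces of the form $L^q_tB_1\to L^{\tilde q}_tB_2$ with $q<\tilde q$, i.e.\ with time as the \emph{outer} exponent on the source side, whereas in $L^1_xL^2_{yT}$ time sits innermost; you would need a mixed-norm variant whose hypotheses you never verify. In fact no such machinery is needed, precisely because the target time norm is $L^\infty$ taken after the $L^2_{xy}$ norm: for each fixed $t\in[-T,T]$, unitarity and the commutation of $V(t)$ with $\p_x$ give $\bigl\|\p_x\int_0^t V(t-t')g(t')\,dt'\bigr\|_{L^2_{xy}}=\bigl\|\int_{\R}\p_x V(-t')\bigl(\chi_{[0,t]}g\bigr)(t')\,dt'\bigr\|_{L^2_{xy}}$, and the non-retarded adjoint bound (the dual of \eqref{seffect-x}, which your $TT^*$ reduction correctly produces) applies to $\chi_{[0,t]}g$; since the truncation acts on the innermost variable, $\|\chi_{[0,t]}g\|_{L^1_xL^2_{yt'}}\le\|g\|_{L^1_xL^2_{yT}}$ uniformly in $t$, and taking the supremum over $t$ yields \eqref{dseffect-x}. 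This fixed-$t$ absorption argument is the classical Kenig--Ponce--Vega one and is exactly what the paper's phrase ``dual version'' refers to.
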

\begin{proof}
See Theorem 2.2 in \cite{F} and Lemma 2.2 in \cite{BJM}. Note that \eqref{dseffect-x} is nothing but the dual version of \eqref{seffect-x}.
\end{proof}

The  maximal function estimates for solution of the linear IVP \eqref{linear-zk} are given in the
next lemma.

\begin{lemma}\label{strich-mzk}
For $s>3/4$ and $T>0$ it holds that
\begin{equation}\label{mfe-x-mzk}
\|V(t)v_0\|_{L^2_xL^{\infty}_{yT}}\le c_T\|v_0\|_{H^s_{xy}}
\end{equation}
and
\begin{equation}\label{mfe-y-mzk}
\|V(t)v_0\|_{L^2_yL^{\infty}_{xT}}\le c_T\|v_0\|_{H^s_{xy}},
\end{equation}
where $c_T$ is a constant depending only on $T$.
\end{lemma}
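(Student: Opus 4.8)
The plan is to exploit the separable structure of the linear symbol. Since $\xi^{3}+\eta^{3}$ splits additively, the propagator factorizes as $V(t)=U_x(t)\,U_y(t)$, where $U_x(t)$ and $U_y(t)$ are the one-dimensional Airy groups defined by $\widehat{U_x(t)f}(\xi,\eta)=e^{it\xi^{3}}\widehat f(\xi,\eta)$ and $\widehat{U_y(t)f}(\xi,\eta)=e^{it\eta^{3}}\widehat f(\xi,\eta)$, acting respectively in the $x$ and $y$ variables only. These two factors commute, $U_y(t)$ is unitary on $L^2_y$ for each fixed $t$, and both commute with the multipliers $J_x^{a}$ and $J_y^{b}$. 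I will prove \eqref{mfe-x-mzk}; the estimate \eqref{mfe-y-mzk} then follows verbatim after interchanging the roles of $x$ and $y$, which is legitimate precisely because of this symmetric factorization.

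First I would dispose of the supremum in $y$ through the one-dimensional Sobolev embedding $H^{1/2^{+}}(\R)\hookrightarrow L^{\infty}(\R)$, applied in the $y$ variable for fixed $(x,t)$, so that
\[
\sup_{y\in\R}|V(t)v_0(x,y)|\lesssim \big\|J_y^{1/2^{+}}V(t)v_0(x,\cdot)\big\|_{L^2_y}.
\]
Writing $\Phi:=J_y^{1/2^{+}}v_0$ and using that all the Fourier multipliers commute, $J_y^{1/2^{+}}V(t)v_0=U_x(t)U_y(t)\Phi$. The crucial observation is that, once the $L^2_y$ norm has been taken, the factor $U_y(t)$ disappears by unitarity: for each fixed $(x,t)$,
\[
\big\|U_x(t)U_y(t)\Phi(x,\cdot)\big\|_{L^2_y}
=\big\|U_y(t)\big(U_x(t)\Phi\big)(x,\cdot)\big\|_{L^2_y}
=\big\|\big(U_x(t)\Phi\big)(x,\cdot)\big\|_{L^2_y}.
\]
Thus the propagation in $y$ contributes nothing beyond the half-derivative already spent on the embedding, and the matter is reduced to a purely one-dimensional (in $x$) maximal quantity for $\Phi$.

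Next I would take the supremum over $t\in[-T,T]$ followed by the $L^2_x$ norm, interchanging $\sup_t$ and $\|\cdot\|_{L^2_y}$ by Minkowski's inequality (equivalently, using $\|g\|_{L^{\infty}_TL^2_y}\le\|g\|_{L^2_yL^{\infty}_T}$), to get
\[
\Big\|\sup_{|t|\le T}\big\|\big(U_x(t)\Phi\big)(x,\cdot)\big\|_{L^2_y}\Big\|_{L^2_x}
\le\Big(\int_{\R}\big\|\sup_{|t|\le T}|U_x(t)\Phi(\cdot,y)|\big\|_{L^2_x}^{2}\,dy\Big)^{1/2}.
\]
For each fixed $y$ the inner factor is a one-dimensional Airy maximal function of $\Phi(\cdot,y)$, controlled on the finite interval $[-T,T]$ by the Kenig--Ponce--Vega maximal function estimate, which costs slightly more than one quarter of a derivative: $\|U_x(t)g\|_{L^2_xL^{\infty}_T}\le c_T\|J_x^{1/4^{+}}g\|_{L^2_x}$. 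Integrating in $y$ yields the bound $c_T\|J_x^{1/4^{+}}\Phi\|_{L^2_{xy}}=c_T\|J_x^{1/4^{+}}J_y^{1/2^{+}}v_0\|_{L^2_{xy}}$, and since $\langle\xi\rangle^{1/4^{+}}\langle\eta\rangle^{1/2^{+}}\lesssim\langle(\xi,\eta)\rangle^{3/4^{+}}$ this is $\lesssim\|v_0\|_{H^{3/4^{+}}}$. Choosing the two small parameters appropriately gives \eqref{mfe-x-mzk} for every $s>3/4$.

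The only genuinely nontrivial ingredient, which I would invoke as a black box, is the sharp one-dimensional Airy maximal function estimate at regularity $1/4$; the rest is bookkeeping. Accordingly I expect the delicate point to be organizing the order of the norms---Sobolev in $y$, then unitarity, then Minkowski to liberate the $\sup_t$, then the one-dimensional maximal estimate in $x$---so that the derivative budget distributes as $1/4$ (from the dispersive supremum in $t$, taken in the $x$ variable) plus $1/2$ (from the purely non-dispersive supremum in $y$), landing exactly at the threshold $s=3/4$. It is worth stressing that the supremum in $y$ admits no dispersive gain and must cost a full half-derivative, which is precisely why the statement requires $s>3/4$ rather than $s>1/4$.
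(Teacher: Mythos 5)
There is a genuine gap: the one-dimensional black box you invoke is misquoted, and the error is fatal to your derivative count. The Kenig--Ponce--Vega estimate that costs $1/4^{+}$ of a derivative is the $L^4_xL^\infty_t$ maximal estimate $\|U(t)g\|_{L^4_xL^\infty_t}\lesssim\|D_x^{1/4}g\|_{L^2_x}$; the estimate your scheme actually requires, $\|U(t)g\|_{L^2_xL^\infty_T}\le c_T\|g\|_{H^s(\R)}$, holds only for $s>3/4$, and that threshold is essentially sharp. Indeed, take $\widehat g=\chi_{[N,N+N^{-1/2}]}$, so that $\|g\|_{H^s}\sim N^{s-1/4}$; for each $x\in[-cN^2,0]$ the choice $t=-x/(3N^2)\in[0,1]$ keeps the phase $x\xi+t\xi^3$ coherent across the frequency band, giving $\sup_{0\le t\le 1}|U(t)g(x)|\gtrsim N^{-1/2}$ on a set of measure $\sim N^2$, hence $\bigl\|\sup_t|U(t)g|\bigr\|_{L^2_x}\gtrsim N^{1/2}$ and $s\ge 3/4$ is forced. (This is precisely the origin of the $H^{3/4^{+}}$ threshold in KPV's contraction proof for KdV.) Plugging the correct input into your otherwise valid chain (Sobolev embedding in $y$, unitarity of $U_y(t)$, Minkowski, Fubini, 1D maximal estimate in $x$) yields only $\|V(t)v_0\|_{L^2_xL^\infty_{yT}}\le c_T\|J_x^{3/4^{+}}J_y^{1/2^{+}}v_0\|_{L^2_{xy}}$, i.e.\ the lemma with $s>5/4$, not $s>3/4$.

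The deeper problem is your closing heuristic that ``the supremum in $y$ admits no dispersive gain.'' What is true is that once you take the $L^2_y$ norm, the factor $U_y(t)$ disappears by unitarity---but that is an argument against taking the $L^2_y$ norm first, not evidence that the $y$-direction is non-dispersive. The symbol $\eta^3$ does disperse in $y$, and the proofs the paper actually relies on (Faminskii, Theorem 2.4, and Bustamante--Jimenez--Mejia, Lemma 2.3; see also Linares--Pastor) treat $\sup_{(y,t)}$ as a genuinely two-parameter maximal function: one performs a dyadic frequency decomposition and estimates the two-dimensional oscillatory-integral kernel of $V(t)$, in the spirit of KPV but using the oscillation in $(\eta,t)$ jointly. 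That joint exploitation is exactly what lowers the cost from your $3/4+1/2$ budget to the stated $3/4^{+}$; any tensorized argument of the kind you propose, which decouples the $y$-supremum from the dispersion, is structurally lossy for this lemma.
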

\begin{proof}
See Theorem 2.4 in \cite{F} and  Lemma 2.3 in \cite{BJM}.
\end{proof}

\subsection{Additional Tools} Next we will list a series of results useful when we treat the nonlinear case for the ZK equation.
In our analysis we will need a Kato-Ponce commutator estimate (\cite{KP})  in weighted spaces. In next lemma $A_p$ denotes the Muckenhoupt class on $\R^n$ (see \cite{CN} for details) and $\mathcal{S}(\R^n)$ stands for the Schwartz space.

 \begin{lemma}\label{cruz}
  Let $1 < p, q < \infty$ and $\frac{1}{2} < r < \infty$ be such that $\frac{1}{r}=\frac{1}{p}+\frac{1}{q}$. If $v\in A_p$,  $w\in A_q$ and $s > \max\{0, n( \frac{1}{r} - 1)\}$ or $s$ is a non-negative even integer, then for all
 $f, g\in \mathcal{S}(\R^n)$,
 \begin{equation*}
 \|D^s(fg) - fD^sg\|_{L^r(v^{\frac{r}{p}}w^{\frac{r}{q}})}
\le c \|D^sf\|_{L^p(v)}\|g\|_{L^q(w)}+\|\nabla f\|_{L^p(v)}\|D^{s-1}g\|_{L^q(w)},
 \end{equation*}
 where $D^s$ represents the homogeneous fractional derivative of order $s$ on $\R^n$.
 \end{lemma}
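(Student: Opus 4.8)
\emph{Proof strategy.} The plan is to realize the commutator $[D^s,f]g:=D^s(fg)-fD^sg$ as a bilinear Fourier multiplier and to split its symbol according to the relative size of the frequencies carried by $f$ and $g$. Writing $\xi$ for the frequency of $f$ and $\eta$ for that of $g$, one has on the Fourier side
\begin{equation*}
\widehat{[D^s,f]g}(\zeta)=c\int_{\xi+\eta=\zeta}\sigma(\xi,\eta)\,\widehat f(\xi)\,\widehat g(\eta)\,d\xi,\qquad \sigma(\xi,\eta)=|\xi+\eta|^s-|\eta|^s,
\end{equation*}
where $\eta=\zeta-\xi$. Fixing a smooth homogeneous partition of unity that separates the cone $\{|\xi|\le|\eta|/2\}$ (the low--high region, where $f$ is low frequency) from its complement $\{|\xi|\gtrsim|\eta|\}$ (the high--low and resonant regions), I would split $\sigma=\sigma_{\mathrm I}+\sigma_{\mathrm{II}}$ accordingly. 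These two pieces will produce, respectively, the second and the first term on the right-hand side.

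On region II, since $|\xi|\gtrsim|\eta|$ one has $|\xi+\eta|\lesssim|\xi|$ and $|\eta|\lesssim|\xi|$, so I would factor $\sigma_{\mathrm{II}}(\xi,\eta)=|\xi|^s\,a(\xi,\eta)$ with $a$ satisfying the Coifman--Meyer bounds $|\partial_\xi^\alpha\partial_\eta^\beta a(\xi,\eta)|\lesssim(|\xi|+|\eta|)^{-|\alpha|-|\beta|}$ away from the origin. The associated operator is then $T_a(D^sf,g)$ for a zero-order Coifman--Meyer bilinear multiplier $T_a$, and a two-weight bilinear multiplier estimate $T_a:L^p(v)\times L^q(w)\to L^r(v^{r/p}w^{r/q})$, valid for $v\in A_p$ and $w\in A_q$, yields the bound $\lesssim\|D^sf\|_{L^p(v)}\|g\|_{L^q(w)}$. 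Note that $\tfrac1r=\tfrac1p+\tfrac1q$ forces $\tfrac rp+\tfrac rq=1$, which is exactly the H\"older balance making the product weight $v^{r/p}w^{r/q}$ the natural target.

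On region I, I would expand by the fundamental theorem of calculus, $\sigma_{\mathrm I}(\xi,\eta)=\sum_{l=1}^n\xi_l\int_0^1\partial_l(|\cdot|^s)(\eta+t\xi)\,dt$, so that each summand factors as the symbol of $\partial_{x_l}$ times $|\eta|^{s-1}$ times a zero-order Coifman--Meyer symbol $b_l$ (using that $|\eta+t\xi|\approx|\eta|$ on region I). The associated operator is $\sum_l T_{b_l}(\partial_{x_l}f,D^{s-1}g)$, and the same weighted bilinear estimate gives $\lesssim\|\nabla f\|_{L^p(v)}\|D^{s-1}g\|_{L^q(w)}$, which is the second term. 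When $s$ is a non-negative even integer the homogeneous symbol $|\zeta|^s$ is a polynomial, and this entire multiplier machinery can be bypassed in favor of the exact finite Leibniz expansion of $D^s(fg)$, reducing the statement to products of ordinary derivatives together with the weighted H\"older inequality.

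The crux, and the only genuinely nontrivial input, is the two-weight bilinear Coifman--Meyer estimate used in both regions: that a zero-order bilinear multiplier maps $L^p(v)\times L^q(w)$ into $L^r(v^{r/p}w^{r/q})$ for $A_p$, $A_q$ weights, including the quasi-Banach range $\tfrac12<r\le1$. This is where the hypothesis $s>\max\{0,n(\tfrac1r-1)\}$ is essential: for $r\le1$ the target $L^r$ is not a Banach space, and one must either pass through Hardy-space endpoint bounds or establish a pointwise sparse (positive) domination of the bilinear form, whose summability over dyadic scales requires precisely that the smoothing deficit $n(\tfrac1r-1)$ be strictly exceeded by $s$. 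The weighted norm control then rests on the Fefferman--Stein vector-valued maximal inequality and the weighted Littlewood--Paley square-function equivalence for Muckenhoupt weights. I expect assembling this weighted bilinear multiplier bound, and tracking the weight exponents through the quasi-Banach range, to be the main obstacle; by contrast the frequency decomposition and the two symbol factorizations above are routine.
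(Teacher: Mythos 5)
Your sketch must be judged as a reconstruction of the cited result, since the paper itself gives no argument for this lemma: its proof is literally ``See Theorem 1.1 in \cite{CN}'', and the statement is quoted verbatim from Cruz-Uribe--Naibo. Measured against what such a reconstruction requires, your region I analysis (the fundamental-theorem-of-calculus expansion, $|\eta+t\xi|\approx|\eta|$ on the low--high cone, factoring $\xi_l\,|\eta|^{s-1}\,b_l(\xi,\eta)$ with $b_l$ of Coifman--Meyer type) is sound and is indeed how the low--high paraproduct is handled. The genuine gap is in region II. Your claim that $\sigma_{\mathrm{II}}(\xi,\eta)=|\xi|^s a(\xi,\eta)$ with $a$ satisfying the Coifman--Meyer bounds is false for non-even $s$: region II, as you defined it, contains the entire resonant set $|\xi|\approx|\eta|$, including a neighborhood of the plane $\xi+\eta=0$, and there $\bigl|\partial^\alpha\bigl(|\xi+\eta|^s\bigr)\bigr|\approx|\xi+\eta|^{s-|\alpha|}$ blows up for $|\alpha|>s$ while $|\xi|+|\eta|$ stays large, so $a$ violates the required decay $(|\xi|+|\eta|)^{-|\alpha|-|\beta|}$; your caveat ``away from the origin'' does not help, because the singular set is not the origin. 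A quick way to see the step cannot be repaired cosmetically: the weighted bilinear Coifman--Meyer theorem you invoke holds for \emph{all} $1/2<r<\infty$ with no hypothesis on $s$ whatsoever, so if region II really reduced to $T_a(D^sf,g)$ with $a$ of order zero, the lemma would follow for every $s>0$ in the full quasi-Banach range --- contradicting the known sharpness of the threshold $s>n(\tfrac1r-1)$ (counterexamples of Grafakos--Oh type). Relatedly, your final paragraph locates the hypothesis $s>\max\{0,n(\tfrac1r-1)\}$ inside the two-weight bilinear multiplier estimate itself; that is a misattribution, since that estimate carries no such condition. The condition arises precisely because the high--high piece of this particular symbol is \emph{not} a zero-order bilinear multiplier.

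The correct treatment splits your region II further: on the strict high--low cone $|\eta|\le|\xi|/2$ one has $|\xi+\eta|\approx|\xi|$ and your factorization is legitimate; on the resonant cone $|\xi|\approx|\eta|$ one must instead factor $|\zeta|^s$ at the \emph{output}, writing this piece as $\sum_j D^s\bigl(P_jf\,\widetilde P_jg\bigr)$ with output spectrum in $\{|\zeta|\lesssim 2^j\}$, and then use a Nikolskii/Bernstein-type bound $\|D^s h\|_{L^r}\lesssim 2^{js}\|h\|_{L^r}$ for spectrum-localized $h$. For $r\le 1$ the relevant convolution kernel (the inverse transform of $|\zeta|^s\phi(\zeta/2^j)$, which is non-smooth at $\zeta=0$ for non-even $s$) decays only like $|x|^{-n-s}$, and its boundedness on the localized pieces together with the summation over $j$ is exactly where $s>n(\tfrac1r-1)$, or the even-integer exception (polynomial symbol), enters; the weighted version additionally uses that $v^{r/p}w^{r/q}$ inherits $A_\infty$/reverse-H\"older structure from $v\in A_p$, $w\in A_q$. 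Two smaller gaps: in the even-integer case, after the exact Leibniz expansion the intermediate terms $\partial^\alpha f\,\partial^\beta g$ with $2\le|\alpha|\le s-1$ are not bounded by the two stated right-hand products via weighted H\"older alone --- you need a weighted Gagliardo--Nirenberg interpolation; and your assembled argument should say explicitly how the smooth conical cutoffs are absorbed into the multiplier symbols, which is routine but not free in the two-weight quasi-Banach setting.
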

\begin{proof}
See Theorem 1.1 in \cite{CN}.
\end{proof}

Next we recall a useful Leibniz's rule for fractional derivatives.
\begin{lemma}\label{leibniz} Let $s\in (0,1)$ and $p\in (1,\infty)$. Then
\begin{equation}\label{fraclr}
\|D^{s}(fg)-fD^{s}g-gD^{s}f\|_{L^p(\R)} \le c\,\|g\|_{L^{\infty}(\R)}\|D^{s}f\|_{L^p(\R)},
\end{equation}
\end{lemma}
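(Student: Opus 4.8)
The final statement to prove is Lemma \ref{leibniz}, the fractional Leibniz rule asserting that for $s\in(0,1)$ and $p\in(1,\infty)$,
\[
\|D^{s}(fg)-fD^{s}g-gD^{s}f\|_{L^p(\R)} \le c\,\|g\|_{L^{\infty}(\R)}\|D^{s}f\|_{L^p(\R)}.
\]

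\textbf{Plan of proof.} The natural approach is to represent the error term $R(f,g):=D^{s}(fg)-fD^{s}g-gD^{s}f$ as a singular-integral bilinear operator and estimate its kernel directly. I would start from the Fourier-analytic definition $\widehat{D^s h}(\xi)=|\xi|^s\widehat h(\xi)$, or equivalently the pointwise formula
\[
D^s h(x) = c_s\,\mathrm{p.v.}\!\int_\R \frac{h(x)-h(x-y)}{|y|^{1+s}}\,dy,
\]
which holds for $s\in(0,1)$. Substituting $h=fg$ and subtracting the two analogous expressions for $fD^sg$ and $gD^sf$, the singular terms built from the values of $f$ and $g$ at the single point $x$ cancel. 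What survives is a genuinely bilinear expression whose numerator is
\[
(f(x)-f(x-y))(g(x)-g(x-y)),
\]
so that
\[
R(f,g)(x) = c_s\,\mathrm{p.v.}\!\int_\R \frac{(f(x)-f(x-y))(g(x)-g(x-y))}{|y|^{1+s}}\,dy.
\]
The crucial gain is that this integrand is no longer singular after the cancellation: the product of two first-order differences vanishes to second order at $y=0$, so the kernel is locally integrable and the principal value is not needed.

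\textbf{Key steps.} First I would verify the pointwise difference formula for $R(f,g)$ rigorously for Schwartz (or sufficiently smooth, decaying) $f,g$, checking the cancellation of the diagonal terms, and then pass to general $f,g$ by density at the end. Second, since we want the $L^\infty$ norm to fall on $g$, I would bound one of the two differences crudely, $|g(x)-g(x-y)|\le 2\|g\|_{L^\infty}$, leaving
\[
|R(f,g)(x)| \le 2c_s\,\|g\|_{L^\infty}\int_\R \frac{|f(x)-f(x-y)|}{|y|^{1+s}}\,dy.
\]
The remaining integral is, up to a constant, exactly the pointwise expression for $D^s|f|$ evaluated in a suitable sense; more precisely, the operator $f\mapsto \int_\R |f(\cdot)-f(\cdot-y)|\,|y|^{-1-s}\,dy$ is pointwise dominated by a standard square-function / Stein--Littlewood-Paley representation of $D^s f$, and its $L^p$ norm is controlled by $\|D^s f\|_{L^p}$ for $1<p<\infty$. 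Taking $L^p_x$ norms and applying this bound finishes the estimate.

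\textbf{Main obstacle.} The one genuinely nontrivial step is the last one: controlling $\big\|\int_\R |f(\cdot)-f(\cdot-y)|\,|y|^{-1-s}\,dy\big\|_{L^p}$ by $\|D^sf\|_{L^p}$. Bounding the absolute value of the difference inside the integral destroys the oscillation that makes $D^s$ a \emph{bounded} (rather than merely positive) operator, so one cannot simply invoke the Fourier multiplier definition. The clean way around this is to recognize the left-hand integral as comparable to the nonlinear fractional maximal/square function associated with $D^s$, whose $L^p$ boundedness is a known Littlewood--Paley or Hardy-space estimate; alternatively one invokes a Kato--Ponce--Kenig--Ponce--Vega type commutator lemma directly. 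I would state this $L^p$ bound as the key auxiliary inequality, prove it via a standard vertical square-function argument (writing the positive kernel as an average of Poisson-type mollifications and using the $g$-function characterization of $L^p$), and then assemble the pieces. A density argument from $\mathcal{S}(\R)$ to general $f\in\dot W^{s,p}$ and $g\in L^\infty$ completes the proof.
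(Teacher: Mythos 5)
Your cancellation identity is correct, and it is the classical starting point: with the pointwise representation of $D^s$ for $s\in(0,1)$, the diagonal terms cancel and, for Schwartz $f,g$,
\begin{equation*}
D^{s}(fg)(x)-f(x)D^{s}g(x)-g(x)D^{s}f(x)\;=\;-c_s\int_{\R}\frac{\bigl(f(x)-f(x-y)\bigr)\bigl(g(x)-g(x-y)\bigr)}{|y|^{1+s}}\,dy,
\end{equation*}
with an absolutely convergent integral, since the numerator is $O(|y|^2)$ near the origin and bounded at infinity. The proof breaks, however, precisely at the step you yourself flag as the main obstacle. Once you insert absolute values and use $|g(x)-g(x-y)|\le 2\|g\|_{L^{\infty}}$, you need
\begin{equation*}
\Bigl\|\int_{\R}\frac{|f(\cdot)-f(\cdot-y)|}{|y|^{1+s}}\,dy\Bigr\|_{L^p(\R)}\;\lesssim\;\|D^{s}f\|_{L^p(\R)},
\end{equation*}
and this auxiliary inequality is \emph{false} for every $s\in(0,1)$ and $1<p<\infty$. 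By Minkowski's inequality the left-hand side is comparable to $\int_0^\infty t^{-s}\,\|f-f(\cdot-t)\|_{L^p}\,\tfrac{dt}{t}$, i.e.\ to the homogeneous Besov norm $\|f\|_{\dot B^{s}_{p,1}}$, an $\ell^1$-sum over dyadic scales; whereas $\|D^{s}f\|_{L^p}$ is the Triebel--Lizorkin norm $\dot F^{s}_{p,2}$, an $\ell^2$-sum over scales. The embedding $\dot B^{s}_{p,1}\hookrightarrow \dot F^{s}_{p,2}$ cannot be reversed: for a lacunary sum $f_N=\sum_{j=1}^{N} j^{-1}2^{-js}\epsilon_j e^{i2^jx}\phi(x)$ ($\phi$ a fixed bump, suitable signs $\epsilon_j$), Littlewood--Paley theory gives $\|D^{s}f_N\|_{L^p}\lesssim \bigl(\sum_j j^{-2}\bigr)^{1/2}=O(1)$, while at each scale $|y|\sim 2^{-j}$ the $j$-th block contributes $\sim j^{-1}|\phi(x)|$ to the positive integral, so the left-hand side grows like $\sum_{j\le N} j^{-1}\to\infty$. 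Taking absolute values inside the $y$-integral destroys exactly the oscillation that makes $D^s$ an $L^p$-bounded operator, and your proposed rescues do not repair this: the pointwise comparison with the Stein square function goes the wrong way (the $L^1$-in-scale average \emph{dominates} the $g$-function, not conversely), and $D^s|f|$ is likewise not controlled by $\|D^sf\|_{L^p}$.

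For comparison, the paper gives no argument at all: the lemma is quoted verbatim as Theorem A.12 of Kenig--Ponce--Vega \cite{KPV1}. The known proofs of that theorem keep the bilinear cancellation intact by decomposing $f$ and $g$ into Littlewood--Paley pieces: only the interactions in which $g$ carries the low frequency matter, and these are summed using vector-valued square-function estimates of Coifman--Meyer/Carleson-measure type, which is where the hypothesis $g\in L^{\infty}$ alone suffices. So a correct completion of your argument would have to replace the crude pointwise bound, after your (correct) kernel identity, by a frequency decomposition that retains $\ell^2$-in-scale summation --- or simply cite \cite{KPV1}, as the paper does.
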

\begin{proof}
See Theorem A.12 in  \cite{KPV1}.
\end{proof}

We end this section with the following interpolation results, which is valid in any dimension $n\geq1$.
\begin{lemma}\label{commlemma} 
Assume $a,b>0$, $1<p<\infty$, and $\theta\in(0,1)$. If $J^af\in L^p(\R^n)$ and $\langle x \rangle^bf\in L^p(\R^n)$ then
\begin{equation}\label{commlemma1}
\|\langle x \rangle^{(1-\theta)b}J^{\theta a}f\|_{L^p}\le c \|\langle x \rangle^bf\|_{L^p}^{1-\theta}\|J^af\|_{L^p}^\theta.
\end{equation}
The same holds for homogeneous derivatives $D^{a}$ in place of $J^a$.
Moreover, for $p=2$,
\begin{equation}\label{commlemma2}
\|J^{\theta a}(\langle x \rangle^{(1-\theta)b}f)\|_{L^2}\le c \|\langle x \rangle^bf\|_{L^2}^{1-\theta}\|J^a f\|_{L^2}^\theta.
\end{equation}

\end{lemma}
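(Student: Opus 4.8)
The plan is to prove \eqref{commlemma1} by complex interpolation, realizing the mixed weight--derivative quantity on the left as the value at an interior point of an analytic family whose two boundary behaviours are governed, respectively, by $\|\langle x\rangle^b f\|_{L^p}$ and $\|J^af\|_{L^p}$. Since both sides are homogeneous of degree one in $f$ and continuous, I would first reduce by density to $f\in\mathcal{S}(\R^n)$ with both norms on the right finite and nonzero (the degenerate cases being trivial), fix $h\in\mathcal{S}(\R^n)$ with $\|h\|_{L^{p'}}\le1$ where $1/p+1/p'=1$, and consider on the strip $\bar S=\{z\in\C:0\le\Re z\le1\}$ the scalar function
\begin{equation*}
F(z)=\int_{\R^n}\big(\langle x\rangle^{(1-z)b}\,J^{za}f\big)(x)\,h(x)\,dx,
\end{equation*}
which is analytic and bounded on $\bar S$ for Schwartz data and whose value at $z=\theta$ equals $\langle\,\langle x\rangle^{(1-\theta)b}J^{\theta a}f,\,h\rangle$. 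By duality, \eqref{commlemma1} reduces to controlling $F$ on the two boundary lines with admissible (polynomial) growth in the imaginary direction, after which Stein's interpolation theorem closes the argument.

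The boundary $\Re z=1$ is the easy one. Writing $z=1+it$ and using $J^{(1+it)a}=J^{ita}J^a$ together with the fact that $\langle x\rangle^{-itb}$ is unimodular, hence an isometry on every $L^p$, Hölder's inequality gives $|F(1+it)|\le\|J^{ita}\|_{L^p\to L^p}\,\|J^af\|_{L^p}$. The imaginary powers $J^{ita}=(1-\Delta)^{ita/2}$ are bounded on $L^p$ for $1<p<\infty$ with $\|J^{ita}\|_{L^p\to L^p}\lesssim(1+|t|)^{N}$ by the H\"ormander--Mikhlin multiplier theorem, so $|F(1+it)|\le c(1+|t|)^{N}\|J^af\|_{L^p}$.

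The boundary $\Re z=0$ is the crux. With $z=it$ one has $\langle x\rangle^{(1-it)b}J^{ita}f=\langle x\rangle^{-itb}\big(\langle x\rangle^{b}J^{ita}\langle x\rangle^{-b}\big)\langle x\rangle^{b}f$, whence $|F(it)|\le\|\langle x\rangle^{b}J^{ita}\langle x\rangle^{-b}\|_{L^p\to L^p}\,\|\langle x\rangle^bf\|_{L^p}$. The main obstacle is therefore to bound the conjugated imaginary power $\langle x\rangle^{b}J^{ita}\langle x\rangle^{-b}$ on $L^p$ with only polynomial growth in $t$, for every $b>0$ and every $1<p<\infty$; one cannot simply invoke boundedness of $J^{ita}$ on $L^p(\langle x\rangle^{bp}dx)$, because $\langle x\rangle^{bp}$ fails to be a Muckenhoupt $A_p$ weight once $b$ is large. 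I would instead treat it as a pseudodifferential operator, expanding $\langle x\rangle^{b}J^{ita}\langle x\rangle^{-b}=J^{ita}+\langle x\rangle^{b}[J^{ita},\langle x\rangle^{-b}]$ and using the symbol calculus (equivalently the Leibniz-type structure of Lemma \ref{leibniz} applied to the smooth symbol-type weight $\langle x\rangle^{-b}$): the commutator has leading symbol $\sim(ita)\langle\xi\rangle^{ita-2}\xi\cdot\nabla_x\langle x\rangle^{-b}$, which gains one power of decay in both $x$ and $\xi$, so the factor $\langle x\rangle^{b}$ is absorbed and only a polynomial-in-$t$ loss survives from differentiating $\langle\xi\rangle^{ita}$. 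This yields $|F(it)|\le c(1+|t|)^{N}\|\langle x\rangle^bf\|_{L^p}$, and Stein's theorem then gives $|F(\theta)|\le c\|\langle x\rangle^bf\|_{L^p}^{1-\theta}\|J^af\|_{L^p}^{\theta}$, which is \eqref{commlemma1}. The homogeneous version follows verbatim, replacing $J^{ita}$ by the imaginary Riesz power $(-\Delta)^{ita/2}$, which satisfies the same $L^p$ bounds.

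For the $p=2$ inequality \eqref{commlemma2}, where the weight now sits inside the derivative, I would run the same scheme with the family $\tilde F(z)=\langle J^{za}(\langle x\rangle^{(1-z)b}f),h\rangle$, $\|h\|_{L^2}\le1$, and target $L^2$. Here Plancherel makes one boundary immediate: on $\Re z=0$ both $J^{ita}$ and $\langle x\rangle^{-itb}$ are unitary and unimodular, so the bound is simply $\|\langle x\rangle^bf\|_{L^2}$ with no loss. The only nontrivial point is on $\Re z=1$, namely estimating $\|J^a(\langle x\rangle^{-itb}f)\|_{L^2}$, which by a fractional Leibniz (Kato--Ponce) estimate in the spirit of Lemma \ref{leibniz} is $\lesssim\|J^af\|_{L^2}+\|J^a(\langle x\rangle^{-itb})\|_{L^\infty}\|f\|_{L^2}$, with a polynomial-in-$t$ constant coming from differentiating the oscillatory phase $\langle x\rangle^{-itb}=e^{-itb\log\langle x\rangle}$ (whose derivatives are bounded and decaying). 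Three lines again closes the estimate. The common technical heart of all three parts, and what I expect to be the main difficulty, is precisely this admissible (polynomial-in-$t$) control of a full-order operator conjugated by an oscillatory unimodular factor --- routine for $p=2$ via Plancherel, but requiring the pseudodifferential/commutator argument above to cover all $b>0$ in the general $L^p$ setting.
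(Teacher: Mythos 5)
Your proof is correct, and for the main inequality \eqref{commlemma1} it follows essentially the same route as the paper: the paper's proof is a one-line reduction to the Three Lines Theorem, deferring all details to Nahas--Ponce \cite{NP} for $p=2$ and asserting that general $p$ is ``exactly the same.'' What you add beyond the paper's record is genuinely the crux for general $p$: you correctly identify that on the boundary $\Re z=0$ one cannot invoke weighted Calder\'on--Zygmund theory for $\langle x\rangle^{bp}$ once $b$ is large (the $A_p$ condition fails), and you supply the fix --- bounding the conjugated imaginary power $\langle x\rangle^{b}J^{ita}\langle x\rangle^{-b}$ via a commutator/pseudodifferential expansion with polynomial-in-$t$ seminorms; this is sound, though to be fully rigorous one should control the full remainder in the symbol expansion (e.g.\ by an SG-calculus argument, or by induction on integer $b$ using $[x_j,J^{ita}]$ plus interpolation in the weight exponent). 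The one place where you take a genuinely different and less economical route is \eqref{commlemma2}: you rerun the analytic family with the weight inside $J^{za}$, needing an endpoint Kato--Ponce bound $\|J^a(\langle x\rangle^{-itb}f)\|_{L^2}\lesssim (1+|t|)^N\|J^af\|_{L^2}$ at $\Re z=1$, whereas the paper obtains \eqref{commlemma2} in one stroke from \eqref{commlemma1} at $p=2$ by Plancherel: the Fourier transform interchanges the weight $\langle x\rangle$ and the operator $J$, so \eqref{commlemma2} is precisely \eqref{commlemma1} applied to $\widehat{f}$ with the roles of $a,b$ and of $\theta,1-\theta$ swapped, with no new boundary estimate required. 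Both arguments close; the paper's is shorter and loss-free, yours is self-contained and makes explicit the $t$-growth bookkeeping that the citation hides.
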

\begin{proof}
The proof of \eqref{commlemma1} is based  on the Three Lines Theorem. We refer to \cite{NP} for the details in the case $p=2$. For general $p$ the proof is exactly the same. Inequality \eqref{commlemma2} is simply Plancherel's theorem applied
to \eqref{commlemma1} when $p=2$.
\end{proof}

\section{Linear singularities}\label{initialdata}

The goal in this section is  to construct the initial data for Theorem \ref{main1}  such that the corresponding linear evolution  exhibits  singularities for future times. More specifically, we explicitly determine $v_0 \in H^s(\R^2)\cap C^{\infty}(\R^2)\cap L^2(\langle x\rangle^s\,dxdy)$, for all $s<2$, and 
show that for all positive integer time instants 
$t \in  \Z^{+}$, $V(t)v_0$ fails to be $C^1(\R^2)$. After we prove in the following section that, for that same initial data, the Duhamel terms  of
the nonlinear solutions remain in $C^1(\R^2)$ for all times, this will serve to establish  that singularities of the  full solutions to the corresponding IVPs for \eqref{zks} occur due to the linear component of the evolutions.

Let us start by considering $v_0\in H^s(\R^2), s<2$, and such that $e^{x+y}v_0\in L^2(\R^2).$ 
If we now define $w(t)=e^{x+y}v(t)$, where $v(t)=V(t)v_0$ is the solution of the linear IVP \eqref{linear-zk}, we
conclude that $w(0)\in L^2(\R^2)$ and $w$ satisfies the PDE:
 \begin{equation}\label{heatslsk}
\p_t w=3\Delta w+2w-3(\p_x w+\p_y w)-(\p_x^3w+\p_y^3w), \hskip5pt (x,y)\in\R^2, t>0.
\end{equation}
Therefore, it follows that $w$ is given by:
\begin{equation}
w(t)=V(t)(e^{3t\Delta}e^{-3t\p_x-3t\p_y}e^{2t}e^{x+y}v_0)
\end{equation} 
and hence
\begin{equation}
\begin{aligned}
e^{x+y}V(t)v_0&=V(t)(e^{3t\Delta}e^{-3t\p_x-3t\p_y}e^{2t}e^{x+y}v_0(x,y))\\
&=V(t)(e^{3t\Delta}e^{x+y-4t}v_0(x-3t,y-3t)).
\end{aligned}
\end{equation}

From Plancherel's theorem and direct estimates of the heat kernel we obtain the following estimates for derivatives of any order $ |\lambda|=m$, $\lambda=(\lambda_1,\lambda_2)$, in $L^{2}(\R^2)$ of $e^{x+y}V(t)v_0, \,t>0$:
\begin{equation}
\begin{aligned}\label{postimes}
&\| \p_{x,y}^\lambda (e^{x+y}V(t)v_0)\|_{L^2}\\
&=\|\p_{x,y}^\lambda V(t)(e^{3t\Delta}e^{x+y-4t}v_0(x-3t,y-3t))\|_{L^2}\\
&\leq\||\xi|^{\lambda_1}\,|\eta|^{\lambda_2}\, e^{-3t\xi^2-3t\eta^2}\|_{L_{\xi,\eta} ^\infty}\|e^{x+y-4t}v_0(x-3t,y-3t)\|_{L^2}\\
&\leq\||\xi|^{\lambda_1}\,\, e^{-3t\xi^2}\|_{L_{\xi} ^\infty}  \| |\eta|^{\lambda_2}\, e^{-3t\eta ^2}\|_{L_{\eta} ^\infty}\|e^{2t+x-3t+y-3t}v_0(x-3t,y-3t)\|_{L^2}\\
&\leq \frac{C_{\lambda_1}}{(\sqrt t)^{\lambda_1}}\frac{C_{\lambda_2}}{(\sqrt t)^{\lambda_2}}e^{2t}\|e^{x+y}v_0\|_{L^2}\\
&\leq \frac{C_\lambda}{t^{\frac m2}}e^{2t}\|e^{x+y}v_0\|_{L^2}.
\end{aligned}
\end{equation}
 
For negative times we define $w(t)=e^{-x-y}v_0$ and assume that $e^{-x-y}v_0\in L^2(\R^2).$
Once we perform a similar estimate to the former one we get for $t<0$:
\begin{equation}\label{negtimes}
\| \p_{x,y}^\lambda (e^{-x-y}V(t)v_0)\|_{L^2}\leq \frac{C_\lambda}{|t|^{\frac{m}{2}}}e^{2|t|}\|e^{-x-y}v_0\|_{L^2}.
\end{equation}

Next, in order to define appropriate initial data $v_0$ that will display the formation of dispersive singularities, we follow the argument in \cite{LPS} and consider $\phi (x,y)=e^{-\sqrt{x^2+y^2}}, \;x,y\in\R$.
It is easy to check that $\phi \in H^s(\R^2)\cap L^2(|(x,y)|^s\,dxdy)$ for any $0<s<2$, therefore $\phi$ is a continuous function but it just misses the regularity to be a $C^1$ function in $\R^2$ at the origin $(x,y)=(0,0)$.
With the help of this $\phi$, the unitary group $V(t)$ and a sequence $(\alpha _j)_j$ of positive real numbers converging to zero sufficiently fast, let us define:
\begin{equation}\label{inidata}
v_0=\sum_{j=1}^\infty \,\alpha_j V(-j)\phi.
\end{equation}

From the known local and global well-posedness theory for the two dimensional ZK equation, discussed in the introduction above,
it follows that for this initial data $v_0$ there exist  globally defined solutions $v(t)\in C(\R;  H^s(\R^2)\cap L^2(|(x,y)|^s\,dxdy))$ of the symmetrized IVP  in \eqref{zks}, with $k=1$, for any $1\leq s<2$ (see also Theorem \ref{lwp-zk} below).

We also claim that $v_0$ in \eqref{inidata} belongs to $C^\infty(\R^2).$  This is equivalent to showing that $e^{-x-y}v_0\in C^\infty(\R^2)$ and from the Sobolev embedding it is enough to prove that $e^{-x-y}v_0\in H^m(\R^2)$ for any $m\in \mathbb N$. For this purpose we restrict our attention now to  \eqref{linear-zk} with  initial datum $\phi$ and note that it satisfies $e^{-x-y}\phi\in L^{2}(\R^2)$ so that, from estimate \eqref{negtimes}, we conclude that for $|\lambda|=|(\lambda_1,\lambda_2|)|=m$:
\begin{equation}   
\begin{aligned}
\| \p_{x,y}^\lambda (e^{-x-y}v_0)\|_{L^2}&\leq \sum_{j=1}^\infty \,\alpha_j \| \p_{x,y}^\lambda (e^{-x-y}V(-j)\phi)\|_{L^2}\\
&\leq \sum_{j=1}^\infty \,\alpha_j\frac{C_\lambda}{j^{\frac{m}{2}}}e^{2j}\|e^{-x-y}\phi\|_{L^2}<\infty.
\end{aligned}
\end{equation}  
for sufficiently small $\alpha_j$'s.

Finally, we consider the linear evolution of the  IVP \eqref{linear-zk} with initial datum $v_0$ and note that for any $t>0, t\notin \mathbb N, V(t)v_0=\sum_{j=1}^\infty \,\alpha_j V(t-j)\phi\in C^{\infty}(\R^2)$. In fact,  we proceed as before and reduce matters to showing that for every $m\in \mathbb N,\; e^{-x-y}V(t)v_0\in H^m(\R^2)$. This claim follows from   \eqref{postimes} and  \eqref{negtimes} since
\begin{equation}
\begin{aligned}
\| \p_{x,y}^\lambda (e^{-x-y}V(t)v_0)\|_{L^2}&\leq \sum_{j=1}^\infty \,\alpha_j \| \p_{x,y}^\lambda (e^{-x-y}V(t-j)\phi)\|_{L^2}\\
&\leq C_\lambda \sum_{j=1}^\infty \,\alpha_j\frac{1}{|t-j|^{\frac{m}{2}}}e^{2|t-j|}<\infty.
\end{aligned}
\end{equation}  

On the other hand, if $t=n\in\mathbb N$, we have
\begin{equation}
V(t)v_0=\alpha_n\phi+\sum_{j=1,j\neq n}^\infty \,\alpha_j V(t-j)\phi,
\end{equation} 
where the series can be shown again to be in $C^{\infty}(\R^2)$ and therefore at these times the linear flow cannot be $C^1(\R^2)$, which we will identify as the dispersive blow-up taking place at $(x,y)=(0,0)$ on
an infinite number of time instants.

\section{Proof of nonlinear smoothing for $k=1$}\label{nonlinearsmoothing1}

We start this section by giving the statement of the local theory of the IVP \eqref{zks} with $k=1$  that will be convenient for our purposes. Recall that in this case, \eqref{zks} writes as
\begin{equation}\label{zk-sim-b}
\begin{cases}
\p_t u+\p_x^3u+\p_y^3u+ u\p_xu+u\p_yu=0, \hskip15pt (x,y)\in \R^2, \; t\in\R,\\
u(x,y,0)=u_0(x,y).
\end{cases}
\end{equation} 

\begin{theorem}\label{lwp-zk} Let $s>3/4$ and $u_0\in Z_{s, s/2}$. Then there exist $T=T(\|u_0\|_{H^s})>0$ and a unique solution $u$ of \eqref{zk-sim-b}
such that
\begin{equation}\label{zk-sim-b1}
u\in C([-T,T]; Z_{s,s/2}),
\end{equation}
\begin{equation}\label{zk-sim-b2}
\|D^s_x\p_x u\|_{L^{\infty}_xL^2_{yT}} +\|D^s_y\p_xu\|_{L^{\infty}_xL^2_{yT}} +\|D^s_x\p_y u\|_{L^{\infty}_yL^2_{xT}} +\|D^s_y\p_y u\|_{L^{\infty}_yL^2_{xT}} <\infty,
\end{equation}
\begin{equation}\label{zk-sim-b3}
\|\p_x u\|_{L^2_TL^{\infty}_{xy}} +\|\p_y u\|_{L^2_TL^{\infty}_{xy}}  <\infty,
\end{equation}
and
\begin{equation}\label{zk-sim-b4}
\|u\|_{L^2_x L^{\infty}_{yT}} +\|u\|_{L^2_yL^{\infty}_{xT}}  <\infty.
\end{equation}

Moreover, for any $T'\in (0,T)$ there exists a neighborhood  $V$ of $u_0\in Z_{s,s/2}$ such that  the data-solution map
$\tilde{u}_0 \mapsto \tilde{u}$ from $V$ into the class defined in \eqref{zk-sim-b1}-\eqref{zk-sim-b4}, with $T'$ instead of $T$,  is Lipschitz.
\end{theorem}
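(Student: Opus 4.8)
The plan is to solve the integral equation \eqref{integral} with $k=1$, that is, to find a fixed point of
\[
\Phi(u)(t) = V(t)u_0 - \int_0^t V(t-t')\big(u\,\p_x u + u\,\p_y u\big)(t')\,dt',
\]
by the contraction mapping principle. For $s>3/4$ and $T\in(0,1]$ I would work in the complete metric space $X_T$ of functions $u\in C([-T,T];Z_{s,s/2})$ whose norm $\|u\|_{X_T}$ is the sum of $\sup_{|t|\le T}\|u(t)\|_{Z_{s,s/2}}$ together with the auxiliary norms appearing in \eqref{zk-sim-b2}, \eqref{zk-sim-b3} and \eqref{zk-sim-b4} (with $J^s$-derivatives inserted where needed), so that membership in $X_T$ is precisely the regularity class \eqref{zk-sim-b1}--\eqref{zk-sim-b4}. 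The objective is to show that, for a radius $a\sim\|u_0\|_{Z_{s,s/2}}$ and $T$ small enough, $\Phi$ maps the closed ball of radius $a$ in $X_T$ into itself and is a contraction there.

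The linear contributions $V(t)u_0$ are controlled directly by the preliminary estimates of Section \ref{estimates}. The $H^s$ piece and its continuity in $t$ follow from the unitarity of $V(t)$; the Kato smoothing terms \eqref{zk-sim-b2} come from \eqref{seffect-x} applied to $J^s u_0$ (using $[V(t),J^s]=0$); the Strichartz term \eqref{zk-sim-b3} from Lemma \ref{strich} and Corollary \ref{stri0}; and the maximal function term \eqref{zk-sim-b4} from Lemma \ref{strich-mzk}, which is exactly where the hypothesis $s>3/4$ is used. For the weighted component I would commute $\langle(x,y)\rangle^{s/2}$ through $V(t)$. Since a direct computation on the Fourier side gives $xV(t)=V(t)(x+3t\,\p_x^2)$ and $yV(t)=V(t)(y+3t\,\p_y^2)$, a weight of order $s/2$ morally costs $s$ derivatives, and the point of the space $Z_{s,s/2}$ is precisely that this cost is matched by the available $H^s$ regularity. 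To make this rigorous at the fractional level I would invoke the interpolation inequality \eqref{commlemma2} (with $a=s$, $b=s/2$ and a suitable $\theta$) to trade the fractional weight against $J^s$, obtaining $\|\langle(x,y)\rangle^{s/2}V(t)u_0\|_{L^2}\lesssim (1+|t|)\|u_0\|_{Z_{s,s/2}}$.

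The nonlinear estimates are the crux. To bound $\Phi(u)$ in the $H^s$ and Kato norms I would apply the dual smoothing estimate \eqref{dseffect-x}, which gains one derivative at the cost of placing the forcing term in $L^1_xL^2_{yT}$. After distributing $J^s$ across the product with the fractional Leibniz rule (Lemma \ref{leibniz}) and the commutator estimate (Lemma \ref{cruz}), the model term $J^s u\,\p_x u$ must be estimated in $L^1_xL^2_{yT}$; this is achieved by Cauchy--Schwarz in $x$, writing it as $\|u\|_{L^2_xL^\infty_{yT}}\,\|\p_x J^s u\|_{L^\infty_xL^2_{yT}}$, which is exactly the maximal function norm \eqref{zk-sim-b4} times the Kato norm \eqref{zk-sim-b2}, while the low-order pieces are absorbed directly in $C([-T,T];H^s)$ using $\|\p_x u\|_{L^2_TL^\infty_{xy}}$ from \eqref{zk-sim-b3}. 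Each such bilinear bound carries a positive power of $T$, coming from the time integration or from H\"older in $t$, which is what produces the contraction factor.

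The hard part, as expected, is the persistence of the weighted norm of the Duhamel term, namely controlling $\big\|\langle(x,y)\rangle^{s/2}\int_0^t V(t-t')(u\,\p_x u+u\,\p_y u)\,dt'\big\|_{L^2}$. Commuting the fractional weight through $V(t-t')$ again generates operators of order up to $s$ acting on the nonlinearity, and these must be balanced against the single-derivative gain of \eqref{dseffect-x}; this is where Lemma \ref{commlemma} is applied repeatedly to redistribute weights and derivatives among the factors $u$ and $\p_x u$ so that every resulting term lands in one of the norms already controlled by $\|u\|_{X_T}$, crucially without invoking Sobolev embedding. Once the self-mapping and contraction bounds are in place (all with a gain $T^{\delta}$ for some $\delta>0$), uniqueness and the Lipschitz dependence of the data-to-solution map follow by running the same bilinear estimates on the difference of two solutions, and the persistence and continuity in $Z_{s,s/2}$ yield \eqref{zk-sim-b1}, completing the argument.
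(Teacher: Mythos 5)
The paper offers no proof of this theorem at all: it simply defers to \cite{BJM} and \cite{FP}, where the weighted local theory is worked out in detail. Your overall scheme --- contraction for the integral equation in a space built from the $Z_{s,s/2}$ norm plus the Kato smoothing, Strichartz and maximal-function norms, with $s>3/4$ entering through Lemma \ref{strich-mzk} --- is indeed the strategy of those references. However, two of your key steps fail as written. First, the H\"older pairing for your model term is invalid: the estimate
$\|u\,\p_x J^s u\|_{L^1_xL^2_{yT}}\le \|u\|_{L^2_xL^\infty_{yT}}\|\p_x J^s u\|_{L^\infty_xL^2_{yT}}$
does not follow from H\"older, since in the $x$ variable the exponents give $\tfrac12+0\neq 1$; Cauchy--Schwarz in $x$ forces the second factor into $L^2_xL^2_{yT}$, i.e.\ $s+1$ derivatives of $u$ in $L^2$ globally, which no norm in your space controls. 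This is exactly the obstruction that makes the quadratic nonlinearity delicate: with only one factor of $u$ the maximal function norm cannot be paired against the Kato norm in $L^1_xL^2_{yT}$ (contrast the cubic case \eqref{dx-2}, where two maximal factors make it close). The standard repair is to write $u\p_xu+u\p_yu=\tfrac12(\p_x+\p_y)(u^2)$ so that the gradient in the dual smoothing estimate \eqref{dseffect-x} absorbs the whole derivative, leaving $\|J^s(u^2)\|_{L^1_xL^2_{yT}}\lesssim T^{1/2}\|u\|_{L^2_xL^\infty_{yT}}\|J^su\|_{L^\infty_TL^2_{xy}}$ up to Leibniz commutators; alternatively one trades $L^1_x$ for a weighted $L^2_x$ via $\langle x\rangle^{-\gamma/2}\in L^2_x$, which is precisely what the paper itself does in Section \ref{estimacomut}.

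Second, your treatment of the weighted component is circular. Lemma \ref{commlemma} redistributes weights and derivatives on a \emph{given} function, but to apply it to $V(t)u_0$ you would already need some weighted norm of $V(t)u_0$, which is exactly what is to be proved. The operator identity $xV(t)=V(t)(x+3t\,\p_x^2)$ is correct but only handles integer weights (and already requires $u_0\in Z_{2,1}$ at weight one); for the fractional weight $\langle(x,y)\rangle^{s/2}$ with $s/2<1$ one needs pointwise bounds on the Stein fractional derivative of the symbol $e^{it(\xi^3+\eta^3)}$, yielding an estimate of the type $\||(x,y)|^{s/2}V(t)u_0\|_{L^2}\lesssim (1+|t|)\big(\||(x,y)|^{s/2}u_0\|_{L^2}+\|J^{s}u_0\|_{L^2}\big)$, and a corresponding argument for the weighted Duhamel term. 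That machinery is the actual technical core of \cite{FP} and \cite{BJM} --- which is why the paper cites them rather than reproving the theorem --- and it cannot be replaced by the interpolation inequality \eqref{commlemma2} alone.
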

\begin{proof}
	For a detailed proof of this result see \cite{BJM} and \cite{FP}.
\end{proof}

%\medskip

Next, for the initial data constructed in \eqref{inidata}, we will show that $z_1(t)$ defined in \eqref{duhamel-zk} belongs to $H^{2^+}(\R^2)$. This  implies in particular that  $z_1(t)\in C^1(\R^2)$. We will only prove that
\begin{equation}\label{dbu1}
z(t):=\int_0^t V(t-t') u\partial_x u(t')\,dt'
\end{equation}
belongs $H^{2^+}(\R^2)$. This suffices to complete
the proof as the analogous estimates for the Duhamel term associated to the nonlinearity $u\partial_y u$ are exactly
the same, with the roles of $x$ and $y$ exchanged with respect to this case.

We will estimate
\begin{equation*}
\|z(t)\|_{H^{2+a}}^2=\|z(t)\|_{L^2}^2+\|D^{2+a}_xz(t)\|_{L^2}^2+\|D^{2+a}_yz(t)\|_{L^2}^2,
\end{equation*}
for $a\in (0,3/22)$.  Since $z(t)\in L^2(\R^2)$ it is enough to estimate $\|D^{2+a}_xz(t)\|_{L^2}^2$ and $\|D^{2+a}_yz(t)\|_{L^2}^2$.

We start by considering $\|D^{2+a}_xz(t)\|_{L^2}$. Using the dual version of the smoothing effect \eqref{dseffect-x}, it follows that
\begin{equation}\label{dbu2}
\begin{split}
\|D^{2+a}_x z(t)\|_{L^2_{xy}} &= \left\|\p_x \int_0^t V(t-t') D^{1+a}_x (u\p_xu)(t')\,dt'\right\|_{L^2_{xy}}\\
&\lesssim \|D^{1+a}_x (u\p_xu)(t)\|_{L^1_xL^2_{yT}}\\
&\lesssim \|u D^{1+a}_x\p_xu\|_{L^1_xL^2_{yT}}+\|D^{1+a}_x (u\p_xu)-uD^{1+a}_x\p_xu\|_{L^1_xL^2_{yT}}\\
&= N\!L_1+N\!L_2
\end{split}
\end{equation}

Next we estimate each term on the right-hand side of \eqref{dbu2}.

%\begin{equation*}
%\begin{split}
% \|u D^{1+a}_x\p_xu\|_{L^1_xL^2_{yT}}&\le  \|u\|_{L^{4/3}_xL^4_{yT}}\| D^{2+a}_x u\|_{L^4_{xyT}}\\
% &\le c\|\ji x\jd^{3\alpha/4}u \|_{L^4_x L^4_{yT}}   \|D^{2+a}_x u\|_{L^4_{xyT}}\\
% &\le cT^{1/4}\|\ji x\jd^{3\alpha/4}u \|_{L^{\infty}_T L^4_{xy}} \|D^{2+a}_x u\|_{L^4_{xyT}}\\
% &\le cT^{1/4}\|J^{1/2+}(\ji x\jd^{3\alpha/4}u) \|_{L^{\infty}_T L^2_{xy}} \|D^{2+a}_x u\|_{L^4_{xyT}}\\ 
% \end{split}
% \end{equation*}
%for $\alpha>\frac23$.

%Using that
%\begin{equation*}
%\|J^{1/2+}(\ji x\jd^{3\alpha/4}u) \|_{L^2_{xy}} \le \|J^{\gamma}u\|^{\theta}_{L^2_{xy}}\|\ji x\jd^{\beta} u\|^{(1-\theta)}_{L^2_{xy}}
%\end{equation*}
%for  $\gamma \theta= \frac12+$ and $\beta(1-\theta)=\frac34 \alpha$ and $\theta\in(0,1)$. Then taking $\theta=\frac13$ we
%have  $\gamma=\frac32+$ and $\beta= \frac32\frac34 \alpha> \frac98\frac23=\frac34$. Thus $2\beta>\frac32$. 

%Hence
%\begin{equation*}
%\begin{split}
% \|u D^{1+a}_x\p_xu\|_{L^1_xL^2_{yT}}\le cT^{1/4} \|J^{3/2+}u\|^{1/3}_{L^2_{xy}}\|\ji x\jd^{3/4+} u\|^{2/3}_{L^2_{xy}} \|D^{2+a}_x u\|_{L^4_{xyT}}.
% \end{split}
 %\end{equation*}

%\vspace{3mm}

  \subsection{Estimates for ${N\!L}_1$} We will employ the Strichartz estimates \eqref{strich-x} and \eqref{strich-y} to estimate $N\!L_1$. 
  From  Remark \ref{rem1} we choose 
  $r= \frac{2(5+\epsilon)}{2+\epsilon}$. Then  H\"older's inequality,
  Sobolev's embedding and interpolation yield
  \begin{equation}\label{arg-1}
\begin{split}
 \|u D^{1+a}_x\p_xu\|_{L^1_xL^2_{yT}}&\le  \|u\|_{L^{p_1}_xL^{q_1}_{yT}}\| D^{2+a}_x u\|_{L^r_{xyT}}\\
 &\le c\|\ji x\jd^{\alpha}u \|_{L^{q_1}_x L^{q_1}_{yT}}   \|D^{2+a}_x u\|_{L^r_{xyT}}\\
 &\le cT^{1/{q_1}}\|\ji x\jd^{\alpha}u \|_{L^{\infty}_T L^{q_1}_{xy}} \|D^{2+a}_x u\|_{L^r_{xyT}}\\
 &\le cT^{1/{q_1}}\|\ji (x,y)\jd^{\alpha}u \|_{L^{\infty}_T L^{q_1}_{xy}} \|D^{2+a}_x u\|_{L^r_{xyT}}\\
 &\le cT^{1/{q_1}}\|J^{s_r}(\ji (x,y)\jd^{\alpha}u) \|_{L^{\infty}_T L^2_{xy}} \|D^{2+a}_x u\|_{L^r_{xyT}}\\ 
 \end{split}
 \end{equation}
where
\begin{equation*}
\alpha>\frac{1}{2},\hskip10pt  \frac{1}{p_1}=1-\frac{1}{r}, \hskip10pt \frac{1}{q_1}=\frac12-\frac{1}{r}, \hskip10pt s_r\geq2\left(\frac12-\frac{1}{q_1}\right)=\frac{2}{r}.
\end{equation*}

\begin{remark}\label{rem2}
Since $\epsilon \in [0,1/2]$ we have $4<r\le 5$; so	 we may (and will) take $s_r=\frac12$.
\end{remark}

Using Lemma \ref{commlemma}, we have
\begin{equation*}
\|J^{1/2}(\ji (x,y)\jd^{\alpha}u) \|_{L^2_{xy}} 
\le c \|J^{\gamma}u\|^{\lambda}_{L^2_{xy}}\|\ji (x,y)\jd^{\beta} u\|^{(1-\lambda)}_{L^2_{xy}}
\end{equation*}
for  $\gamma \lambda= \frac12$, $\beta(1-\lambda)=\alpha$ and $\lambda\in(0,1)$.  That is, we 
have  $\gamma=\frac12\frac{1}{\lambda}$ and $\beta=\frac{\alpha}{1-\lambda}>\frac12\frac{1}{1-\lambda}$.  Then taking $\lambda=\frac13$ we obtain $\gamma=\frac32$ and $\beta> \frac34$. Thus $2\beta>\frac32$ and
\begin{equation}\label{final}
\begin{split}
 \|u &D^{1+a}_x\p_xu\|_{L^1_xL^2_{yT}}\\
 &\le cT^{1/4} \|J^{3/2}u\|^{1/3}_{L^{\infty}_T L^2_{xy}}\|\ji (x,y)\jd^{3/4^+} u\|^{2/3}_{L^{\infty}_T L^2_{xy}} \|D^{2+a}_x u\|_{L^r_{xyT}}.
 \end{split}
 \end{equation}
The first two terms on the right-hand side of \eqref{final} are bounded by Theorem \ref{lwp-zk}. The third one
requires a more careful analysis which we pursue in the next section.

% Next we look at the linear part. Using the estimate \eqref{strich-x}, it follows
%\begin{equation*}
% \|D^{2+a}_x V(t)u_0\|_{L^r_{xyT}}= \|D^{\frac{\epsilon\theta}{2}}_xV(t) D^{2+a-\frac{\epsilon\theta}{2}}_xu_0\|_{L^r_{xyT}}\le \|D^{2^{-}}_xu_0\|_{L^2_{xy}}
% \end{equation*}
%whenever $a<\frac{\epsilon\theta}{2}=\frac{3\epsilon}{2(5+\epsilon)}$, $\epsilon\in [0,1/2)$.

\medskip

\subsection{Estimate for  $\|D^{2+a}_x u\|_{L^r_{xyT}}$}\label{estimanorma}

Let us write $s_0=1+a-\frac{\epsilon\theta}{2}$ and  observe that $s_0\in (0,1)$ when $a<\frac{\epsilon\theta}{2}=\frac{3\epsilon}{2(5+\epsilon)}$, $\epsilon\in [0,1/2)$. We recall  that the solution of  \eqref{zk-sim-b} is given by
\begin{equation}\label{int-sol}
u(t)= V(t)u_0-\int_0^t V(t-t')\,( u\p_xu +u\p_yu)(t')\,dt',
\end{equation}
for $t\in(-T,T)$, $T>0$ given by the local theory.

\medskip

First we use the Strichartz estimate \eqref{strich-x} to estimate the linear part of \eqref{int-sol} as
\begin{equation*}
\|D^{2+a}_x V(t)u_0\|_{L^r_{xyT}}= \|D^{\frac{\epsilon\theta}{2}}_xV(t) D^{1+s_0}_xu_0\|_{L^r_{xyT}}\le \|D^{2^{-}}_xu_0\|_{L^2_{xy}}
\end{equation*}
because $s_0<1$.
Using this bound, as well as Minkowski's inequality, the Cauchy-Schwarz inequality in time and again \eqref{strich-x}, for the
integral part of \eqref{int-sol}, we obtain
\begin{equation}\label{sr1}
\begin{split}
\|D^{2+a}_x u\|_{L^r_{xyT}}
&\le c\|D^{1+s_0}_xu_0\|_{L^2_{xy}}\\
&\hskip10pt+ cT^{1/2} \|D^{s_0}_x\p_x(u\p_xu)\|_{L^2_{xyT}}+cT^{1/2} \|D^{s_0}_x\p_x(u\p_yu)\|_{L^2_{xyT}}.
\end{split}
\end{equation}

We first consider the second term on the right-hand side of \eqref{sr1}. The usual Leibniz rule yields 
\begin{equation}\label{sr2}
\|D^{s_0}_x\p_x(u\p_xu)\|_{L^2_{xyT}}\le \|D^{s_0}_x(u\p_x^2u)\|_{L^2_{xyT}}+\|D^{s_0}_x(\p_xu\p_xu)\|_{L^2_{xyT}}.
\end{equation}
We analyze next each term on the right-hand side of \eqref{sr2}.  We apply the fractional Leibiniz rule in Lemma \ref{leibniz} in the $x$-direction and then we use
H\"older's inequality to deduce
\begin{equation}\label{sr3}
\begin{split}
& \|D^{s_0}_x(u\p_x^2u)\|_{L^2_{xyT}}\\
&\le c \|D^{s_0}_xu\,\p_x^2u\|_{L^2_{xyT}}+c\|u\,D^{s_0}_x\p_x^2u\|_{L^2_{xyT}}
+c\| \|\p_x^2u\|_{L^{\infty}_{x}}\|D^{s_0}_xu\|_{L^2_{x}}\|_{L^2_{yT}}\\
&\le c\| \|\p_x^2u\|_{L^{\infty}_{xy}}\|D^{s_0}_xu\|_{L^2_{xy}}\|_{L^2_T}+c\|u\|_{L^2_xL^{\infty}_{yT}}\|D^{s_0}_x\p_x^2u\|_{L^{\infty}_xL^2_{yT}}\\
 &\le c\big( \|\p_x^2u\|_{L^2_TL^{\infty}_{xy}}\|D^{s_0}_xu\|_{L^{\infty}_TL^2_{xy}}+\|u\|_{L^2_xL^{\infty}_{yT}}\|D^{s_0}_x\p_x^2u\|_{L^{\infty}_xL^2_{yT}})\\
 \end{split}
 \end{equation}
 From the local theory we can see that all the terms on the right-hand side can be controlled for data in $H^{2^{-}}(\R^2)$ regularity.  Observe that the norm 
 $ \|\p_x^2u\|_{L^2_TL^{\infty}_{xy}}$ is finite since we use the Strichartz estimate \eqref{strich-x} to deduce it and it does not introduce loss of derivatives.
 
 A similar but easier  argument leads to 
 \begin{equation}\label{sr4}
 \|D^{s_0}_x(\p_xu\p_xu)\|_{L^2_{xyT}}\le c\,\|\p_x u\|_{L^2_TL^{\infty}_{xy}}\|D^{s_0+1}_xu\|_{L^{\infty}_TL^2_{xy}},
 \end{equation}
 whose terms on the right-hand side can be estimated using the local theory.

For the last term in \eqref{sr1} we  follow similar arguments as in \eqref{sr3} and \eqref{sr4} to conclude that
\begin{equation}\label{sr5}
\begin{split}
 &\|D^{s_0}_x\p_x(u\p_yu)\|_{L^2_{xyT}}\\
 &\le c\, \big(\|\p_x\p_yu\|_{L^2_TL^{\infty}_{xy}}\|D^{s_0}_xu\|_{L^{\infty}_TL^2_{xy}}+ \|u\|_{L^2_xL^{\infty}_{yT}}\|D^{s_0}_x\p_x\p_yu\|_{L^{\infty}_xL^2_{yT}}\big)\\
 &\hskip10pt+c\,\big(\|\p_x u\|_{L^2_TL^{\infty}_{xy}}\|D^{s_0}_xu\|_{L^{\infty}_TL^2_{xy}}
 +\,\|\p_y u\|_{L^2_TL^{\infty}_{xy}}\|D^{s_0+1}_xu\|_{L^{\infty}_TL^2_{xy}}).
 \end{split}
 \end{equation}
Once again the local theory for data in $H^{2^{-}}(\R^2)$ allows us to deduce that the right-hand side of   \eqref{sr5} is finite.  As we commented above the
norm $\|\p_x\p_yu\|_{L^2_TL^{\infty}_{xy}}$ is finite because the Strichartz estimates do not introduce extra derivative and so we may control it using the arguments
employed to obtain the local theory.

Gathering together all estimates above we conclude that $NL_1$ is finite.

\subsection{Estimates for ${N\!L}_2$} \label{estimacomut}

 Here we will estimate the term 
 $$
NL_2=I:= \|D^{1+a}_x (u\p_xu)-uD^{1+a}_x\p_xu\|_{L^1_xL^2_{yT}}.
 $$
First of all, let us consider $\gamma>1$ to be chosen sufficiently close to 1, as to satisfy all the requirements
that follow. Thus, H\"older's inequality in $x$ variable give
\begin{equation}\label{comm0}
I=\|\langle x\rangle^{-\gamma/2}\|\langle x\rangle^{\gamma/2}[D_x^{1+a},u]\p_xu\|_{L^2_{yT}}\|_{L^1_x}\lesssim \|\langle x\rangle^{\gamma/2}[D_x^{1+a},u]\p_xu\|_{L^2_{xyT}},
\end{equation}
where we used that $\langle x\rangle^{-\gamma/2}\in L^2(\R)$. Define $v(x)=w(x)=\langle x\rangle^{\gamma}$. Then, 
$$
v^{2/4}w^{2/4}=\langle x\rangle^{\gamma/2}\langle x\rangle^{\gamma/2}=\langle x\rangle^{\gamma}.
$$
In view of Lemma \ref{cruz}, with $r=2$ and $p=q=4$,
\begin{equation}\label{comm1}
\begin{split}
\| &\langle x\rangle^{\gamma/2}[D_x^{1+a},u]\p_xu\|_{L^2_{x}}=\|[D_x^{1+a},u]\p_xu\|_{L^2_{x}(\langle x\rangle^{\gamma})}\\
&\lesssim \|\langle x\rangle^{\gamma/4}D_x^{1+a}u\|_{L^4_x}\|\langle x\rangle^{\gamma/4}\p_xu\|_{L^4_x} +\|\langle x\rangle^{\gamma/4}\p_xu\|_{L^4_x}\|\langle x\rangle^{\gamma/4}D_x^{a}\p_xu\|_{L^4_x}.
\end{split}
\end{equation}
By using that $\mathcal{H}^2=-1$, where $\mathcal{H}$ is the Hilbert transform, and $\langle x\rangle^{\gamma}$ belongs to $A_4$ class, we have
\begin{equation}\label{hilbert}
\|\langle x\rangle^{\gamma/4} D_x^{a}\p_x u\|_{L^4_x}=\|\langle x\rangle^{\gamma/4}\mathcal{H} D_x^{a+1}u\|_{L^4_x}\lesssim \|\langle x\rangle^{\gamma/4}D_x^{a+1}u\|_{L^4_x}.
\end{equation}
Consequently, from \eqref{comm1} and Young's inequality,
\begin{equation}\label{comm2}
\begin{split}
\|\langle x\rangle^{\gamma/2}[D_x^{1+a},u]\p_xu\|_{L^2_{x}}
&\lesssim \|\langle x\rangle^{\gamma/4} D_x^{a+1}u\|_{L^4_x} \|\langle x\rangle^{\gamma/4}\p_xu\|_{L^4_x} \\
&\lesssim \|\langle x\rangle^{\gamma/4}D_x^{a+1}u\|_{L^4_x}^2+\|\langle x\rangle^{\gamma/4}\p_xu\|_{L^4_x}^2.
\end{split}
\end{equation}

Substituting \eqref{comm2} in \eqref{comm0},
\begin{equation}\label{comm3}
\begin{split}
I&\lesssim \|\langle x\rangle^{\gamma/4} D_x^{a+1}u\|_{L^4_{Txy}}^2+\|\langle x\rangle^{\gamma/4}\p_xu\|_{L^4_{Txy}}^2\\
&=: I_1+I_2.
\end{split}
\end{equation}

Let us estimate $I_1$. We start by using \eqref{commlemma1}   in $L^4$ and with homogeneous derivatives, to split the norm
into two, one of which only has weights and the other only has derivatives
\begin{equation}
\begin{split}
\|\langle x\rangle^{\gamma/4}D_x^{a+1}u\|_{L^4_{x}}&
\lesssim   \|\langle x\rangle^{\gamma/4\lambda}u\|_{L^4_{x}}^{\lambda}
\| D_x^{(a+1)/(1-\lambda)}u\|_{L^4_{x}}^{1-\lambda} \\
& \lesssim  \|\langle x\rangle^{\gamma/4\lambda}u\|_{L^4_{x}} + 
\| D_x^{(a+1)/(1-\lambda)}u\|_{L^4_{x}},
\end{split}
\end{equation}
with $\lambda \in (0,1)$, and then
\begin{equation}
\begin{split}
\|\langle x\rangle^{\gamma/4}D_x^{a+1}u\|_{L^4_{xy}} &\lesssim
\|\langle x\rangle^{\gamma/4\lambda}u\|_{L^4_{xy}} + 
\| D_x^{(a+1)/(1-\lambda)}u\|_{L^4_{xy}}\\
&\lesssim
\|\langle (x,y)\rangle^{\gamma/4\lambda}u\|_{L^4_{xy}} + 
\| D_x^{(a+1)/(1-\lambda)}u\|_{L^4_{xy}}.
\end{split}
\end{equation}
For the first of these terms we use Sobolev's embedding and Lemma \ref{commlemma} again 
\begin{equation}
\begin{split}
\|\langle (x,y)\rangle^{\gamma/4\lambda}u\|_{L^4_{xy}}&
\lesssim   \|J^{1/2}\left( \langle (x,y)\rangle^{\gamma/4\lambda}u\right) \|_{L^2_{xy}}
\\
& \lesssim  \|\langle (x,y)\rangle^{\gamma/4 \lambda \sigma}  u  \|_{L^2_{xy}}^{\sigma}
\|J^{1/2(1-\sigma)}u\|_{L^2_{xy}}^{1-\sigma} \\
& \lesssim \|\langle (x,y)\rangle^{\gamma/4 \lambda \sigma}  u  \|_{L^2_{xy}}+
\|J^{1/2(1-\sigma)}u\|_{L^2_{xy}}.
\end{split}
\end{equation}
So we choose $\sigma=\frac{3}{4}-\varepsilon$ and $\lambda=\frac13+\delta$, with $\varepsilon$ and $\delta$ small
so that
$$\frac{1}{2(1-\sigma)}=\frac{2}{1+4\varepsilon}<2,$$
and
$$\frac{\gamma}{4 \lambda \sigma}=\frac{\gamma}{(\frac13+\delta)(3-4\varepsilon)}<1,$$
for $\gamma$ close to 1. So, gathering all these estimates back into $I_1$ we get
\begin{equation}
I_1\lesssim T^{1/2}\|\langle (x,y)\rangle^{1^-}  u  \|^2_{L^{\infty}_T L^2_{xy}}+
 T^{1/2}\|J^{2^-}u\|^2_{L^{\infty}_T L^2_{xy}}+\| D_x^{(a+1)\frac{3}{2}^+}u\|^2_{L^4_{Txy}}.
\end{equation} 
The first two terms on the right-hand side of this inequality are bounded by the estimates from the local theory. As
for the third term, we observe that
$$\| D_x^{(a+1)\frac{3}{2}^+}u\|_{L^4_{Txy}}\leq T^{1/12}\| D_x^{(a+1)\frac{3}{2}^+}u\|_{L^{6}_TL^4_{xy}},$$
and that $L^{6}_TL^4_{xy}$ is a Strichartz admissible norm, corresponding to $\theta=1/2$ and $\epsilon=0$ in
Lemma \ref{strich}. So that the linear term in \eqref{int-sol} in this norm will be bounded by
$\| D_x^{(a+1)\frac{3}{2}^+}u_0\|_{L^2_{xy}},$
and following an argument analogous to that in Section \ref{estimanorma}, 
the boundedness of the norm of the full nonlinear solution is guaranteed from the local existence theorem.

As for the term $I_2$ in \eqref{comm3} we start by
switching from the classical derivative to a homogeneous one, in an analogous manner to what was done in \eqref{hilbert},
$$
\|\langle x\rangle^{\gamma/4} \p_x u\|_{L^4_x}=\|\langle x\rangle^{\gamma/4}\mathcal{H} D_x u\|_{L^4_x}\lesssim \|\langle x\rangle^{\gamma/4}D_x u\|_{L^4_x}.
$$
And now we just have to observe that this term $\|\langle x\rangle^{\gamma/4}D_x u\|_{L^4_x}$ is almost
the same as in $I_1$, except that the number of derivatives is $1$, instead of the slightly bigger $1+a$, making the treatment of $I_2$
exactly the same as was done above for $I_1$.

\vspace{5mm}
Now we estimate derivatives in the $y$ variable. Following \eqref{dbu2} we have
that the dual version of the smoothing effect (now in the $y$ variable) yields
\begin{equation}\label{fdy-0a}
\begin{split}
\|D^{2+a}_y&\int_0^t V(t-t') u\p_x u (t')\,dt'\|_{L^2_{xy}} \le \| D^{1+a}_y(u\p_x u)\|_{L^1_yL^2_{xT}}\\
&\le \|u D^{1+a}_y \p_x u\|_{L^1_yL^2_{xT}}+\| D^{1+a}_y(u\p_xu)- uD^{1+a}_y\p_xu\|_{L^1_yL^2_{xT}}\\
&= K_1+K_2.
\end{split}
\end{equation}

Next we estimate $K_1$. Following the argument in \eqref{arg-1} it follows that we
reach an estimate analogous to \eqref{final}:
\begin{equation}\label{fdy-0b}
\begin{split}
 \|u D^{1+a}_y&\p_xu\|_{L^1_xL^2_{yT}}\\
 &\le cT^{1/4} \|J^{3/2}u\|^{1/3}_{L^{\infty}_T L^2_{xy}}\|\ji (x,y)\jd^{3/4^+} u\|^{2/3}_{L^{\infty}_T L^2_{xy}} \|D^{1+a}_y \p_x u\|_{L^r_{xyT}},
 \end{split}
\end{equation}
the only difference being the term $\|D^{1+a}_y \p_x u\|_{L^r_{xyT}}$, with mixed derivatives,
instead of $\|D^{2+a}_x  u\|_{L^r_{xyT}}$ that occurred in \eqref{final} with all $2+a$ derivatives in $x$ only.
However, the same type of argument as in Section \ref{estimanorma} allows us to again control this term,
with a gain of $\frac{\epsilon\theta}{2}=\frac{3\epsilon}{2(5+\epsilon)}\sim \frac{3}{22}$ derivatives in
the $x$ direction,
from the local theory norms. 

On the other hand, following the same ideas as in Section \ref{estimacomut} to estimate the commutator term,
\begin{equation}\label{fdy-0c}
K_2 \le \|\ji y\jd^{\gamma/2} [D^{1+a}_y, u]\p_x u\|_{L^2_{xyT}},
\end{equation}
and using the estimate in Lemma \ref{cruz} again we obtain
\begin{equation}\label{fdy-0d}
\begin{split}
\|\ji y\jd^{\gamma/2} [D^{1+a}_y, u]\p_x u\|_{L^2_y} &\le \|\ji y\jd^{\g/4}\p_x u\|_{L^4_y}  \|\ji y\jd^{\g/4} D^{1+a}_y u\|_{L^4_y}\\
&\hskip10pt +  \|\ji y\jd^{\g/4}\p_y u\|_{L^4_y}  \|\ji y\jd^{\g/4}D^a_y\p_x u\|_{L^4_y}.
\end{split}
\end{equation}

The terms   $\|\ji y\jd^{\g/4} D^{1+a}_y u\|_{L^4_y}$ and  $\|\ji y\jd^{\g/4}\p_y u\|_{L^4_y}$  can be handled exactly as was done in Section \ref{estimacomut}, now in the $y$ direction.  The only novelty here are
the terms  $\|\ji y\jd^{\g/4}D^a_y\p_x u\|_{L^4_y}$  and $\|\ji y\jd^{\g/4}\p_x u\|_{L^4_y}$, with
mixed directions for derivatives and norms. For the second of these terms, we observe that
\begin{equation}\label{fdy-0e}
\|\ji y\jd^{\g/4}\p_x u\|_{L^4_{xy}}\lesssim \|\ji (x,y)\jd^{\g/4}\p_x u\|_{L^4_{xy}}\lesssim
\|\ji (x,y)\jd^{\g/4}D_x u\|_{L^4_{xy}}.
\end{equation}
Hence the argument employed above to estimate $I_2$, which in turn was analogous to the one used to
estimate $I_1$, applies in this case too. 

We are thus only left with controlling $\|\ji y\jd^{\g/4}D^a_y\p_x u\|_{L^4_y}$. But again
 using the same ideas as in the estimates for $I_1$, splitting the norm
by Lemma \ref{commlemma}, and ultimately using Strichartz estimates as in Section \ref{estimanorma}
to recover some derivatives in order to bring the total to $2^-$ derivatives in $L^2$, one can follow similar arguments as before to obtain the required bounds. This finishes the proof.

%Sobolev embedding and Lemma \ref{commlemma} yield
%\begin{equation}\label{fdy-1}
%\begin{split}
%\|\ji y\jd^{\g/4}D^a_y\p_x u\|_{L^4_{xy}} &\lesssim  \|\p_x(\ji y\jd^{\g/4} D^a_y u)\|_{L^2_{xy}} \;\;\;(s_0>\frac14)\\
%&\lesssim \|J^{s_0/\theta} D^a_y\p_x u\|_{L^2_y} +\|\ji y\jd^{\g/4(1-\theta)} D^a_y\p_x u\|_{L^2_y}.
%\end{split}
%\end{equation}
%We  require
%$$
%\frac{s_0}{\theta}+1+a\le 2^{-}. \hskip15pt {\color{red}( a\;\; \text{and} \;\; 2^{-} \;\;\text{as before are fine})}
%$$
%to control the first term in the last inequality in \eqref{fdy-1}.
%
%Applying Lemma \ref{commlemma} we obtain
%\begin{equation}\label{fdy-2}
%\|\ji y\jd^{\g/4(1-\theta)} D^a_y\p_x u\|_{L^2_y}\le \| \ji y\jd^{\g/4(1-\theta)(1-\tilde{\theta})} \p_x u\|_{L^2_y} +\|D^{a/\tilde{\theta}}_y\p_xu\|_{L^2_y}.
%\end{equation}
%To bound the last term above we ask
%$$
%\frac{a}{\tilde{\theta}} +1 \le 2^{-}.
%$$
%This can be done by choosing $\tilde{\theta}= \frac12+\varepsilon$ as before.
%
%
%\begin{equation}
%\begin{split}
%\| \ji y\jd^{\g/4(1-\theta)(1-\tilde{\theta})} \p_x u\|_{L^2_{xyT}} &\lesssim \| J^1 (\ji (x,y)\jd^{\g/4(1-\theta)(1-\tilde{\theta})} u)\|_{L^2_{xyT}}\\
% &\lesssim \|J^{1/\tilde{\tilde{\theta}}}u\|_{L^2_{xyT}}+ \| \ji y\jd^{\g/4(1-\theta)(1-\tilde{\theta})(1-\tilde{\tilde{\theta}})} \p_x u\|_{L^2_{xyT}}
% \end{split}
% \end{equation}

%%%%%%%%%%%%%%%%%%%%%%%%%%%%%%%%%%%%%%%%%%%%%%%%%%%%%%%%%%%%%%%%%%

\section{Proof of nonlinear smoothing for $k=2$}\label{nonlinearsmoothing2}

We now consider solutions of the IVP associated to the modified Zakharov-Kuznetsov equation,
\begin{equation}\label{mzk}
\begin{cases}
\p_t u +\p_x^3u+\p_y^3u+u^2(\p_x u+\p_yu)=0, \hskip15pt (x,y)\in\R^2, \;t\in\R,\\
u(x,y,0)=u_0(x,y).
\end{cases}
\end{equation}

In this section we show that we can obtain the nonlinear smoothing effect of
the Duhamel term of the solution without using weighted Sobolev spaces. We start
by giving the local  well-posedness result that we will use in this case. 

%\begin{theorem}[\cite{LPa1}]\label{lwp-mzk} For any $u_0\in H^s(\R^2)$, $s>3/4$, there exist $T=T(\|u_0\|_{H^s})>0$ and a unique solution $u$ of the IVP \eqref{mzk}
%defined in the interval $[0,T]$ such that
%\begin{equation}\label{mzk-1} 
%u\in C([0,T] :H^s(\R^2)),
%\end{equation}
%\begin{equation}\label{mzk-2} 
%\|D^s_x\p_xu\|_{L^{\infty}_xL^2_{yT}}+\|D^s_y\p_xu\|_{L^{\infty}_xL^2_{yT}} <\infty,
%\end{equation}
%\begin{equation}\label{mzk-3} 
%\|u\|_{L^3_TL^{\infty}_{xy}}+ \|\p_xu\|_{L^{9/4}_TL^{\infty}_{xy}}<\infty,
%\end{equation}
%{\color{red} {\rm or}
%\begin{equation}\label{mzk-3b} 
%\|u\|_{L^p_TL^{\infty}_{xy}}+ \|\p_xu\|_{L^{12/5}_TL^{\infty}_{xy}}<\infty, \hskip10pt p=\frac{2}{1-2\gamma}, \;\;\gamma\in(0,5/12)
%\end{equation}
%}
%and
%\begin{equation}\label{mzk-4} 
%\|u\|_{L^2_xL^{\infty}_{yT}} <\infty.
%\end{equation}

%Moreover, for any $T'\in (0,T)$ there exists a neighborhood $\mathcal{W}$ of $u_0\in H^s(\R^2)$ such that the map $\tilde{u}_0\mapsto \tilde{u}(t)$ from
%$\mathcal{W}$ into the the class defined by \eqref{mzk-1}--\eqref{mzk-4} is smooth.
%and the existence time $T$ is given by
%\begin{equation*}
%T\sim \|u_0\|_{H^s}^{-2/\gamma}.)
%\end{equation*}
%}
%\end{theorem}

\begin{theorem}\label{lwp-mzk} For any $u_0\in H^s(\R^2)$, $s>3/4$, there exist $T=T(\|u_0\|_{H^s})>0$ and a unique solution $u$ of the IVP \eqref{mzk}
defined in the interval $[-T,T]$ such that
\begin{equation}\label{mzk-1} 
u\in C([-T,T] :H^s(\R^2)),
\end{equation}
\begin{equation}\label{mzk-2} 
\|D^s_x\p_xu\|_{L^{\infty}_xL^2_{yT}}+\|D^s_y\p_xu\|_{L^{\infty}_xL^2_{yT}}
+\|D^s_x\p_yu\|_{L^{\infty}_yL^2_{xT}}+\|D^s_y\p_yu\|_{L^{\infty}_yL^2_{xT}} <\infty,
\end{equation}
\begin{equation}\label{mzk-3} 
\|u\|_{L^3_TL^{\infty}_{xy}}+ \|\p_xu\|_{L^{12/5}_TL^{\infty}_{xy}}+\|\p_yu\|_{L^{12/5}_TL^{\infty}_{xy}}<\infty,
\end{equation}
%{\color{red} {\rm or}
%\begin{equation}\label{mzk-3b} 
%\|u\|_{L^p_TL^{\infty}_{xy}}+ \|\p_xu\|_{L^{12/5}_TL^{\infty}_{xy}}<\infty, \hskip10pt p=\frac{2}{1-2\gamma}, \;\;\gamma\in(0,5/12)
%\end{equation}
%}
and
\begin{equation}\label{mzk-4} 
\|u\|_{L^2_xL^{\infty}_{yT}} +\|u\|_{L^2_yL^{\infty}_{xT}} <\infty.
\end{equation}

Moreover, for any $T'\in (0,T)$ there exists a neighborhood $\mathcal{W}$ of $u_0\in H^s(\R^2)$ such that the map $\tilde{u}_0\mapsto \tilde{u}(t)$ from
$\mathcal{W}$ into the the class defined by \eqref{mzk-1}--\eqref{mzk-4}, with $T'$ instead of $T$, is Lipschitz.
\end{theorem}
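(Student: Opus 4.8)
The plan is to solve the Duhamel formulation
\begin{equation*}
u(t)=\Phi_{u_0}(u)(t):=V(t)u_0-\int_0^t V(t-t')\,u^2(\p_x u+\p_y u)(t')\,dt'
\end{equation*}
by the contraction mapping principle in a complete metric space tailored to the estimates of Section \ref{estimates}. Concretely, I would fix $s>3/4$, let $\Lambda_T(u)$ denote the sum of the four quantities appearing on the left-hand sides of \eqref{mzk-1}--\eqref{mzk-4} (the energy norm $\|u\|_{L^\infty_T H^s}$, the Kato smoothing norms, the Strichartz norms, and the maximal-function norms), and work in the ball $X_T^M=\{u:\Lambda_T(u)\le M\}$ with $M\sim\|u_0\|_{H^s}$ and $T$ to be chosen small. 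The role of the regularity restriction $s>3/4$ is precisely to make the maximal-function estimate of Lemma \ref{strich-mzk} available, which is the most demanding of the four ingredients.

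First I would bound the linear part: applying $J^s$ together with Lemma \ref{seffect} (smoothing), Lemma \ref{strich} and Corollary \ref{stri0} (Strichartz), and Lemma \ref{strich-mzk} (maximal function) to $V(t)u_0$ yields $\Lambda_T(V(\cdot)u_0)\lesssim \|u_0\|_{H^s}$, so the free evolution already sits in the target class. For the integral term I would use the \emph{inhomogeneous} counterparts of these estimates --- the dual smoothing effect \eqref{dseffect-x}, and the Strichartz and maximal-function estimates combined with Minkowski's inequality and Cauchy--Schwarz in $t$ --- to reduce everything to controlling suitable space-time norms of the nonlinearity $F=u^2(\p_x u+\p_y u)$.

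The heart of the argument is the trilinear estimate for $F$. Writing $J^sF$ via the fractional Leibniz rule (Lemma \ref{leibniz}), the genuinely dangerous contribution is the one in which the full $J^s$ and the $\p_x$ both fall on a single factor, producing a term morally of the form $u^2\,D^s_x\p_x u$. This is where the derivative loss of the nonlinearity is met head-on: I would place the factor $D^s_x\p_x u$ in the Kato smoothing norm $L^\infty_x L^2_{yT}$, which gains exactly one derivative, and estimate the remaining quadratic factor by Cauchy--Schwarz in $x$,
\begin{equation*}
\|u^2 D^s_x\p_x u\|_{L^1_x L^2_{yT}}\le \|u\|_{L^2_x L^\infty_{yT}}^2\,\|D^s_x\p_x u\|_{L^\infty_x L^2_{yT}},
\end{equation*}
using the maximal-function norm twice. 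The terms in which the derivatives are distributed among different factors are milder, and are handled by placing $\p_x u$ (or $\p_y u$) in a Strichartz norm $L^{12/5}_T L^\infty_{xy}$ and the remaining factors in the energy and maximal-function norms. Collecting these bounds gives $\Lambda_T(\Phi_{u_0}(u))\le c\|u_0\|_{H^s}+cT^{\theta}\Lambda_T(u)^3$ for some $\theta>0$, the positive power of $T$ being extracted from H\"older's inequality in the time variable; an analogous difference estimate controls $\Lambda_T(\Phi_{u_0}(u)-\Phi_{u_0}(\tilde u))$ through the multilinearity of $F$. Choosing $M=2c\|u_0\|_{H^s}$ and $T$ small then makes $\Phi_{u_0}$ a contraction of $X_T^M$ into itself, and the Banach fixed point theorem furnishes the unique solution; the continuity statement $u\in C([-T,T]:H^s)$ and the Lipschitz dependence on the data follow from the same estimates.

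The main obstacle I anticipate is the bookkeeping around the derivative loss: the nonlinearity carries one derivative, which can only be recovered through the Kato smoothing norm, so the solution space must contain that norm, and in every term of the Leibniz expansion one must verify that the single factor carrying the top-order $x$- (or $y$-) derivative can indeed be routed into $L^\infty_x L^2_{yT}$ while the remaining low-order factors are simultaneously controllable in the maximal-function and Strichartz norms. The mixed-derivative terms such as $D^s_x\p_y u$ versus $D^s_y\p_y u$ require keeping careful track of which spatial direction plays the role of $x$ in each estimate, but present no new analytic difficulty.
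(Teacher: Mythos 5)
Your proposal is correct and follows essentially the same route as the paper: the paper's proof is a one-line appeal to the contraction mapping scheme of Theorem 1.1 in \cite{LPa2}, built on exactly the three ingredients you use --- the Strichartz estimates of Corollary \ref{stri0}, the Kato smoothing effect of Lemma \ref{seffect} (with its dual \eqref{dseffect-x}) to recover the derivative lost in the nonlinearity, and the maximal-function estimate of Lemma \ref{strich-mzk}, which is indeed the source of the restriction $s>3/4$. In particular, your key trilinear bound $\|u^2 D^s_x\p_x u\|_{L^1_xL^2_{yT}}\le\|u\|^2_{L^2_xL^{\infty}_{yT}}\|D^s_x\p_x u\|_{L^{\infty}_xL^2_{yT}}$ is precisely how the derivative loss is absorbed in that argument.
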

\begin{proof}
	The proof relies on the contraction mapping principle and it is quite similar to that of Theorem 1.1 in \cite{LPa2}. The Strichartz estimates in Corollary \ref{stri0}, Kato's smoothing effects in Lemma \ref{seffect} combined with the maximal function estimate in Lemma \ref{strich-mzk} are enough to recover the loss of derivatives in the nonlinear term of \eqref{mzk}.
\end{proof}

Next we prove Proposition \ref{smooth-zk} in the case $k=2$. As we already said, the general case follows similar arguments.

\begin{proof}[Proof of Proposition \ref{smooth-zk}]
We will prove the proposition in the case $s=1$. For larger values of $s$ the strategy is the same. Thus, it suffices to assume that  $u_0\in H^1(\R^2)$ and prove that
\begin{equation*}
z_2(t)=\int_0^t V(t-t') u^2\left(\p_xu+\p_yu\right)(t')\,dt' \in H^2(\R^2) \hskip15pt \text{for}\hskip10pt t\in[-T,T].
\end{equation*}
Since $z_2(t)$ is in $L^2(\R^2)$, it is enough to show that $\partial_x^2z_2(t)$ and $\partial_y^2z_2(t)$ are in $L^2(\R^2)$ for $t\in[-T,T]$. 

In addition, due to the symmetry with respect to $x$ and $y$ in $z_2(t)$ and in our estimates presented in Section \ref{estimates} we will only show that
\begin{equation}\label{equiH2}
\p_x^2\int_0^t V(t-t') u^2\p_xu(t')\,dt' \quad \mbox{and } \quad  \p_y^2\int_0^t V(t-t') u^2\p_xu(t')\,dt'
\end{equation}
belong to $L^2(\R^2)$. The estimates for the nonlinear term involving $u^2\p_yu$ follow in a similar fashion.

Now using the dual version of the smoothing effect \eqref{dseffect-x},
\begin{equation}\label{dx-1}
\begin{split}
\|\p_x^2\int_0^t &V(t-t') u^2\p_xu(t')\,dt' \|_{L^2_{xy}}\\
 &\lesssim \|\p_x(u^2\p_xu)\|_{L^1_xL^2_{yT}}\\
&\lesssim \|u^2\p_x^2u\|_{L^1_xL^2_{yT}} +\|u (\p_xu)^2\|_{L^1_xL^2_{yT}}\\
&= A_1 +A_2.
\end{split}
\end{equation} 

Applying H\"older's inequality it follows that
\begin{equation}\label{dx-2}
A_1\lesssim \|u\|_{L^2_xL^{\infty}_{yT}}^2\|\p_x^2u\|_{L^{\infty}_xL^2_{yT}},
\end{equation}
which is finite from the local theory in \eqref{mzk-2} and \eqref{mzk-4}.

%Again using the H\"older inequality we get
%\begin{equation}\label{dx-3}
%\begin{split}
%A_2 &\lesssim \|u\|_{L^2_xL^{\infty}_{yT}}\|\p_xu\p_x^2u\|_{L^2_xL^2_{yT}}\\
%&\lesssim \|u\|_{L^2_xL^{\infty}_{yT}} \|\p_x u\|_{L^2_TL^{\infty}_{xy}}\|\p_x^2 u\|_{L^{\infty}_TL^2_{xy}}\\
%&\lesssim T^{1/18}  \|u\|_{L^2_xL^{\infty}_{yT}} \|\p_x u\|_{L^{9/4}_TL^{\infty}_{xy}}\|\p_x^2 u\|_{L^{\infty}_TL^2_{xy}}.
%\end{split}
%\end{equation}
%Thus Theorem \ref{lwp-mzk} also implies that $A_2$ is bounded.

Again using the H\"older inequality we get
\begin{equation}\label{dx-3}
\begin{split}
A_2 &\lesssim \|u\|_{L^2_xL^{\infty}_{yT}}\|\p_xu\|_{L^4_xL^4_{yT}}^2.
\end{split}
\end{equation}
Since the norm $\|\p_xu\|_{L^4_xL^4_{yT}}$ can be interpolated from the norms $\|\p_x^2u\|_{L^{\infty}_xL^2_{yT}}$ 
in \eqref{mzk-2} and $\|u\|_{L^2_xL^{\infty}_{yT}}$ in \eqref{mzk-4}, which are bounded by Theorem \ref{lwp-mzk}, it follows that
that the term $A_2$ is also finite.

To estimate the second term in \eqref{equiH2} we use again the smoothing effect \eqref{dseffect-x}, this time for $\p_y$, to obtain
\begin{equation}\label{dy-1}
\begin{split}
& \|\p_y^2 \int_0^t V(t-t') \big (u^2\partial_x u)(t')\, dt'\|_{L^2_{xy}}\\
&\lesssim \|u^2\p_y\p_x u\|_{L^1_xL^2_{yT}}+\|u\p_yu\p_xu\|_{L^1_xL^2_{yT}}\\
&= B_1+B_2.
\end{split}
\end{equation}

For the first term, $B_1$, we use H\"older's inequality to get
\begin{equation}\label{dy-2}
B_1 \lesssim \|u\|_{L^2_xL^{\infty}_{yT}}^2\|\p_y\p_xu\|_{L^{\infty}_xL^2_{yT}},
\end{equation}
which is finite by \eqref{mzk-2} and \eqref{mzk-4}.

Applying once more the H\"older inequality, now for $B_2$, we obtain
\begin{equation}\label{dy-3}
\begin{split}
B_2 &\lesssim \|u\|_{L^2_xL^{\infty}_{yT}}\|\p_yu\p_xu\|_{L^2_xL^2_{yT}}\\
&\lesssim  T^{1/12}\|u\|_{L^2_xL^{\infty}_{yT}}\|\p_yu\|_{L^{12/5}_TL^{\infty}_{xy}}\|\p_xu\|_{L^{\infty}_TL^2_{xy}}.
\end{split}
\end{equation}
All the terms in the last inequality are bounded by Theorem \ref{lwp-mzk}.

Combining the above inequalities the result follows.
\end{proof}

\begin{proof}[Proof of Theorem \ref{main3}]

We start proving part \eqref{item-i}. Recall that for any initial data $v_0\in H^1(\R^2)$, as long as  the local solution of \eqref{zks} exists, it  can be written as
\begin{equation}\label{duhamel}
v(t)=V(t)v_0-\int_0^t V(t-t') v^k(\partial_xv+\partial_yv) (t')\, dt'=: V(t) v_0+ z_k(t).
\end{equation}
In addition, from Proposition \ref{smooth-zk} it follows that $z_k(t)\in H^2(\R^2)$. Hence the Sobolev embedding  theorem  implies that $z_k(t)\in W^{1,p}(\R^2)$, for any  $p\ge 2$, as long as it is well defined. 
Then we have to show that there exists $v_0\in H^1(\R^2)\cap W^{1,p}(\R^2)$ such that $U(t^{*})v_0 \notin W^{1,p}(\R^2)$ for every $p>2$.

Let $\phi\in H^1(\R^2)\cap W^{1,1}(\R^2)$ such that $\phi\notin W^{1,p}(\R^2)$ for every $p>2$.  We note that, from \eqref{strich-x0},
\begin{equation}\label{decay-1}
\|V(t)\partial_x \phi\|_{L^{\infty}_{xy}}\le \frac{c}{|t|^{2/3}}\|\partial_x \phi\|_{L^1_{xy}},  \quad t\neq0.
\end{equation}
Also, since $V(t)$ is a unitary group on $L^2(\R^2)$, we have
\begin{equation}\label{decay-2}
\|V(t)\partial_x \phi\|_{L^2_{xy}}= c\|\partial_x \phi\|_{L^2_{xy}}.
\end{equation}
Since $\phi\in H^1(\R^2)\cap W^{1,1}(\R^2)$, an interpolation using \eqref{decay-1} and \eqref{decay-2} gives that $V(t)\p_x\phi\in L^p(\R^2)$, for any $p\geq2$.
Similar estimates hold if we replace $\p_x\phi$ by $\phi$ or $\partial_y \phi$ in \eqref{decay-1} and \eqref{decay-2}, from which we deduce that $V(t)\phi\in W^{1,p}(\R^2)$, for any $p\geq2$ and each $t\neq0$.

Next we choose $v_0(x,y)=V(-t^{*})\phi(x,y)$. Multiplying $v_0$ by a small constant, if necessary, we may assume that the corresponding solution, say, $v(t)$, is global in time with $v\in C(\R:H^1(\R^2))$. Since $V(t^*)$ is a unitary group in $H^1(\R^2)$ we deduce that $v_0\in H^1(\R^2)\cap W^{1,p}(\R^2)$ with $V(t^{*})v_0=\phi \in H^1(\R^2)$ but $\phi\notin W^{1,p}(\R^2)$ for every 
$p>2$. This implies  claim \eqref{item-i}.

\bigskip

Next we prove  part \eqref{item-ii}. First we note that by taking $\varepsilon=1/2$ and $\theta\in[0,1)$ in Lemma \ref{strich}, we have, for $p>2$,
\begin{equation}\label{strich-x-mzk-a}
\|D^{\frac{p-2}{4p}}_x V(t)v_0\|_{L^{\frac{12p}{5(p-2)}}_tL^p_{xy}}\le c\|v_0\|_{L^2_{xy}}
\end{equation}
and
\begin{equation}\label{strich-x-mzk-b}
\|D^{\frac{p-2}{4p}}_y V(t)v_0\|_{L^{\frac{12p}{5(p-2)}}_tL^p_{xy}}\le c\|v_0\|_{L^2_{xy}},
\end{equation}
for any $v_0\in L^2(\R^2)$.

We take $\widetilde{v}_0 \in H^1 (\R^2)$ with $1= r-\frac{p-2}{4p}=r-\hat{p}$ with $\widetilde{v}_0\notin W^{r,p}(\R^2_{+})$,
where  $\R^2_{+}$ is defined as \eqref{thm3a}. From \eqref{strich-x-mzk-a} and \eqref{strich-x-mzk-b} there exists $\tilde{t}>0$ such that 
\begin{equation*}
V(\pm \tilde{t}\,)\widetilde{v}_0,\hskip10pt V(\pm 2\tilde{t}\,)\widetilde{v}_0 \in W^{r,p}(\R^2) \hskip10pt \text{with}\;\; r=1+\frac{p-2}{4p}=j.
\end{equation*}

Next we consider the initial data
\begin{equation}
v_0 = V(\tilde{t}\,)\widetilde{v}_0 +V(-\tilde{t}\,)\widetilde{v}_0.
\end{equation}
We observe that $v_0\in H^1(\R^2)\cap W^{r,p}(\R^2)$. In addition, multiplying $v_0$ by a small constant, if necessary, we may also assume that the corresponding solution  is global in time with $v\in C(\R:H^1(\R^2))$.  On the other hand,  we have from Proposition \ref{smooth-zk} that $z_k(t)\in C([-T,T]:H^2(\R^2))$. Hence, the Sobolev embedding implies that
\begin{equation*}
C([-T,T]:H^2(\R^2)) \hookrightarrow C([-T,T]: W^{r,p}(\R^2))
\end{equation*}
whenever  $p\in(2,10)$.

We finally note that 
\begin{equation*}
V(\tilde{t}\,) v_0= V(2\tilde{t}\,)\widetilde{v}_0+ \widetilde{v}_0 \notin W^{r,p}(\R^2_{+})
\end{equation*}
and the same holds for  $V(-\tilde{t}\,) v_0$. 

Combining the above information we derive the desired result.
\end{proof}

\vskip1cm

\begin{center}
	{\bf Acknowledgments}
\end{center}
The authors would like to thank German Fonseca for fruitful conversations that lead to the end of this project. J. Drumond Silva would like to thank Sim\~ao Correia for useful discussions and the kind hospitality of IMPA, Instituto de Matem\'atica Pura e Aplicada, in
	Rio de Janeiro, and of IMECC, Instituto de Matem\'atica, Estat\'istica e Computa\c{c}\~ao Cient\'ifica of
	UNICAMP, Universidade Estadual de Campinas, where part of this work was developed. 
	This work was developed in the framework and partially supported by the CAPES-FCT convenium {\it Equa\c{c}\~oes de evolu\c{c}\~ao dispersivas}. J. Drumond Silva was partially supported by FCT/Portugal through UID/MAT/04459/2019. F. Linares was partially supported by CNPq and FAPERJ/Brazil. A. Pastor was partially supported by CNPq/Brazil grants 402849/2016-7 and 303098/2016-3.

\vspace{1cm}


\begin{thebibliography}{99}

\bibitem{BBM} T.B. Benjamin, J.L. Bona, J.J. Mahony, {\em Model equations for long waves in nonlinear, dispersive
media}, Philos. Trans. Royal Soc. London, Ser. A 272 (1972), 47--78.

\bibitem{BFS} D. Bhattacharya, L.G. Farah, S. Roudenko, {\em Global well-posedness for low regularity data in the 2D modified Zakharov-Kuznetov equation}, 
arXiv:1906.05822v1.

\bibitem{BPSS} J.L. Bona, G. Ponce, J.-C. Saut,  C. Sparber, {\em Dispersive blow-up for nonlinear Schr\"odinger equations revisited}, 
J. Math. Pures Appl. 102 (2014), 782--811. 

 \bibitem{BS1} J.L. Bona, J.-C. Saut, {\em Dispersive blow up of solutions of generalized KdV equations}, J. Differential Equations 103 (1993), 3--57. 
 
 \bibitem{BS2} J.L. Bona, J.-C. Saut, {\em Dispersive blow up II. Schr\"odinger-type equations, optical and oceanic rogue waves}, Chin. Ann. Math., Ser. B 31 (2010), 793--810.
 
 \bibitem{BJM} E. Bustamante, J. Jimenez, J. Mejia, {\em The Zakharov-Kuznetzov equation in weighted Sobolev spaces}, J. Math. Anal. Appl. 
 433 (2016), 149--175.
 
 \bibitem{CN}  D. Cruz-Uribe, V. Naibo, {\em Kato-Ponce inequalities on weighted and variable Lebesgue space}, Differential Integral Equations 29 (2016), 801--836. 
 
 \bibitem{npv} E. Di Nezza, G. Palatucci, E. Valdinocci, {\em Hitchhiker's guide to the fractional Sobolev spaces} Bull.  Sci.  Math. 136 (2012),  no.  5, 521--573.
 
 \bibitem{F} A.V. Faminskii, {\em The Cauchy problem for the Zakharov-Kuznetsov equation}, Differ. Equ. 31 (1995), 1002--1012. 
 
 \bibitem{FLP}    L.G. Farah, F. Linares, A.  Pastor, {\em  A note on the 2D generalized Zakharov-Kuznetsov equation: local, global and scattering results}, J. Differential Equations 253 (2011), 2558--2571.
 
 \bibitem{FP} G. Fonseca,  M.  Pach\'on, {\em Well-posedness for the two dimensional generalized Zakharov-Kuznetsov equation in anisotropic weighted Sobolev spaces},  J. Math. Anal. Appl. 443 (2016), 566--584.
 
 \bibitem{G2}    A. Gr\"unrock, {\em  On the generalized Zakharov-Kuznetsov equation at critical regularity}. arXiv:1509.09146v1.
 
 
 \bibitem{GH}  A. Gr\"unrock, S. Herr, {\em The Fourier restriction norm method for the Zakharov-Kuznetsov equation},  Discrete Contin. Dyn. Syst. 34 (2014), 2061--2068. 
 
 \bibitem{HT} Y. Hong, M. Taskovic, {\em On dispersive blow-ups for the nonlinear Schr\"odinger equation}, Differential Integral Equations, 29 (2016), 875--888.
 
 \bibitem{H} D. Han-Kwan, {\em  From Vlasov-Poisson to Korteweg-de Vries and Zakharov-Kuznetsov}. Comm. Math. Phys. 324 (2013),  961--993.
 
 \bibitem{ILP}
\newblock  P. Isaza, F.  Linares, G. Ponce,
\newblock \emph{On the propagation of regularity and decay of solutions to the k-generalized Korteweg-de Vries equation},
\newblock  {Comm. Partial Differential Equations}  40 (2015) 1336-1364. 




 
 \bibitem{Ka} T. Kato,  {\em On the Cauchy problem for the (generalized) Korteweg-de Vries equation}. Advances in Mathematics Supplementary Studies, Studies in Applied Math. 8 (1983), 93--128.
 
 
 \bibitem{KP} T. Kato, G. Ponce, {\em Commutator estimates and the Euler and Navier-Stokes equations}, Commun. Pure Appl. Math. 41 (1988), 891--907. 
 
 \bibitem{KPV}  C.E. Kenig, G. Ponce, L. Vega,  {\em Well-posedness of the initial value problem for the Korteweg-de Vries equation}, J. Amer. Math. Soc. 4 (1991), 323--347.
 
\bibitem{KPV1} C.E. Kenig, G. Ponce, L. Vega, {\em Well-posedness and scattering results for the generalized Korteweg--de Vries equation via the contraction principle}, Commun. Pure Appl. Math. 46 (1993), 527--620. 


\bibitem{Ki}  S. Kinoshita, {\em Global Well-posedness for the Cauchy problem of the Zakharov-Kuznetsov equation in 2D}, arXiv:1905.01490.

\bibitem{LLS}    D. Lannes, F.  Linares, J-C. Saut, {\em  The Cauchy problem for the Euler-Poisson system and derivation of the Zakharov-Kuznetsov equation}, Prog. Nonlinear Diff. Eqs Appl. 84 (2013), 181--213.

 \bibitem{Li}  F. Linares,  {\em A higher order modified Korteweg-de Vries equation}, Mat. Apl. Comput. 14 (1995), 253--267.
 
 \bibitem{LiPal} F. Linares, J.M. Palacios, {\em Dispersive blow-up and persistence properties for the Schr\"odingerÐKortewegÐde Vries system}, Nonlinearity, 32 (2019), 4996--5016
 

\bibitem{LPa2} F. Linares, A. Pastor, {\em Well-posedness for the two-dimensional modified Zakharov-Kuznetsov equation},
SIAM J. Math. Anal.  41 (2009), 1323--1339.

\bibitem{LPa1} F. Linares, A. Pastor, {\em Local and global well-posedness for the 2D generalized Zakharov-Kuznetsov equation},
J. Functional Analysis  260 (2011), 1060--1085.



\bibitem{LP} F. Linares, G. Ponce, {\em Introduction to Nonlinear Dispersive Equations}, 2nd Ed. Springer-Verlag, New York, 2015.

\bibitem{LP2}  F. Linares, G. Ponce, {\em On Special Regularity Properties of  Solutions of the Zakharov-Kuznetsov Equation}, Commun. Pure Appl. Anal. 17 (2018), 1561--1572.

\bibitem{LPS}  F. Linares, G. Ponce, D. Smith, {\em On the regularity of solutions to a class of nonlinear dispersive equations}, Math. Ann. 369 (2017), 797--837.

\bibitem{LS} F. Linares, M. Scialom, {\em On the smoothing properties of solutions to the modified Korteweg-de Vries equation}, J. Differential Equations 106 (1993), 141--154. 




\bibitem{MP}  L. Molinet, D. Pilod, {\em Bilinear Strichartz estimates for the Zakharov-Kuznetsov equation and applications}, Ann. Inst. H. Poincar\'e Anal. Non Lin\'eaire 32 (2015), 347--371.


\bibitem{NP}  J. Nahas, G. Ponce, {\em  On the persistent properties of solutions to semi-linear Schr\"odinger equation}, Comm. Partial Differential Equations 34 (2009), 1208--1227. 
 
\bibitem{Po} G. Ponce, {\em Personal Communication}.

\bibitem{RV2}      F. Ribaud, S. Vento, {\em A note on the Cauchy problem for the 2D generalized Zakharov-Kuznetsov equations}, C. R. Acad. Sci. Paris 350 (2012),  499--503.



\bibitem{KZ}   V.E. Zakharov, E.A.   Kuznetsov, {\em On three dimensional solitons}, Sov. Phys. JETP. 39 (1974), 285--286. 
 
 \end{thebibliography}
\end{document}